\newtheorem{theorem}{Theorem}[section]
\newtheorem{lemma}[theorem]{Lemma}
\newtheorem{proposition}[theorem]{Proposition}
\newtheorem{corollary}[theorem]{Corollary}
\newtheorem{question}[theorem]{Question}
\theoremstyle{definition}
\newtheorem{definition}[theorem]{Definition}
\theoremstyle{remark}
\newtheorem{remark}[theorem]{Remark}
\newtheorem{example}[theorem]{Example}
\newcommand\pf{\begin{proof}}
\newcommand\epf{\end{proof}}
\newcommand\C{\mathbb{C}}
\newcommand\yd{\mathcal{YD}}
\newcommand\ext{\mathrm{Ext}}
\newcommand\cd{\mathrm{cd}}
\newcommand\pd{\mathrm{pd}}
\newcommand\Tr{\mathrm{tr}}
\newcommand\ZZ{\mathbb{Z}}
\DeclareMathOperator{\Hom}{Hom}
\DeclareMathOperator{\GL}{GL}
\numberwithin{equation}{section}
\title{On the monoidal invariance of the cohomological dimension of  Hopf algebras}
\author{Julien Bichon}
\address{ Universit\'e Clermont Auvergne, CNRS, LMBP, F-63000 CLERMONT-FERRAND, FRANCE}
\email{julien.bichon@uca.fr}
\subjclass[2010]{16T05, 16E40, 16E10}
\begin{document}

\begin{abstract}
	We discuss the question of whether the global dimension is a monoidal invariant for Hopf algebras, in the sense that if two Hopf algebras have equivalent monoidal categories of comodules, then their global dimensions should be equal. We provide several positive new answers to this  question, under various assumptions of   smoothness, cosemisimplicity or finite dimension. We also discuss the comparison between the global dimension and the Gerstenhaber-Schack cohomological dimension in the cosemisimple case, obtaining equality in the case the latter is finite. One of our main tools is the new concept of twisted separable functor.	 
	
\end{abstract}

\maketitle


\section{introduction}

A classical invariant of an algebra $A$ is its (right) global dimension
$${\rm r.gldim}(A) = {\rm max}\left\{\pd_A(M), \ M \in  \mathcal M_A\right\}   \in \mathbb N \cup \{\infty\}$$
where for a (right) $A$-module $M$, $\pd_A(M)$ stands for its projective dimension, i.e. the smallest possible length for a resolution of $M$ by projective $A$-modules. 

The global dimension is a key ingredient in the analysis of certain geometric properties of discrete groups \cite{br, dw}, and  often serves as a good analogue of the dimension of a smooth affine variety. However  in some noncommutative situations, it is better to replace it by the Hochschild cohomological dimension, which has similar geometric significance, and is defined by:
\begin{align*}
{\rm cd}(A)&= {\rm max}\left\{n : \ H^n(A, M) \not=0 \ {\rm for} \ {\rm some} \ A-{\rm bimodule} \ M\right\}\in \mathbb N \cup \{\infty\} \\
& = {\rm min}\left\{n : \ H^{n+1}(A, M) =0 \ {\rm for} \ {\rm any} \ A-{\rm bimodule} \ M\right\}  \\
&={\rm pd}_{_A\mathcal M_A}(A) 
\end{align*}
where $H^*(A,-)$ denotes Hochschild cohomology and ${\rm pd}_{_A\mathcal M_A}(A)$ is the projective dimension of $A$ in the category of $A$-bimodules. 

Indeed, for example if $A=A_1(k)$ is the first Weyl algebra ($k$ is, as in all the paper, an algebraically closed field), we have ${\rm r.gldim}(A_1(k)) = 1$ (in characteristic zero) and $\cd(A_1(k))=2$, while $A_1(k)$ should definitively be considered as a $2$-dimensional object.

When $A$ is a Hopf algebra, it is well-known that we have
 $${\rm r.gldim}(A) = \pd_A(k_\varepsilon)=\cd(A)= {\rm l.gldim}(A) = \pd_A(_\varepsilon k)$$
 where $k_\varepsilon$ and $_\varepsilon k$ denote the respective right and left trivial $A$-modules, and ${\rm l.gldim}(A)$ is the left global dimension. 
 See \cite{lolo} for the equalities at the extreme left and right, and, for example, \cite{giku}  for the other equality. We simply will denote this number by $\cd(A)$, and call it the cohomological dimension of $A$.
 
 A general classical problem is whether the global dimension or the Hochschild cohomological dimension remain preserved under various kind of ``deformations" of $A$, and the question we are particularly interested in, originally asked in \cite{bi16} and suggested by examples studied in \cite{bic}, is the following one.
 
 \begin{question}\label{ques}
 	If $A$ and $B$ are Hopf algebras having equivalent linear tensor categories of comodules, do we have $\cd(A)= \cd(B)$? 
 \end{question}

 Some remarks immediately arise on the signifance and interest of Question \ref{ques}.
\begin{enumerate}
	\item The word ``tensor" is crucial in the question, since this is what captures information about the algebra structure inside the category of comodules. Dropping it would make the question meaningless, as shown by the example of group algebras: if two group algebras have equivalent categories of comodules, the only conclusion, in lack of additional information, is that the groups have the same cardinality.
	\item Tannaka-Krein duality \cite{js} enables one to reconstruct a Hopf algebra from its tensor category of comodules together with the forgetful functor to vector spaces. However, it is not assumed here that the given monoidal equivalence is compatible with the respective forgetful functors, and so the Hopf algebras are non-isomorphic in general. There are many instances of the situation, see for example \cite{bic14,scsurv} for a large review of examples, and \cite{mro14,mro15,leta, rvdb} for more recent ones. 
	\item As just said, the Hopf algebras in Question \ref{ques} are non-isomorphic in general, but worst, some of their ring-theoretical properties, such as Gelfand-Kirillov dimension, can be very diffrerent,  see \cite{chiwawa}. The interest in the question is thus both theoretical, in the investigation of which properties of a Hopf algebra are preserved under monoidal equivalence of the category of comodules, and practical, in the determination of the global dimension of new Hopf algebras from known old ones.	
\end{enumerate}	

There are, to the best of our knowledge, two partial positive answers to Question \ref{ques} in the literature.

\begin{enumerate}
	\item In \cite{bi16,bi18}, it is shown that when $A$, $B$ are cosemisimple with antipode satisfying $S^4={\rm id}$, then $\cd(A)=\cd(B)$.
	\item In \cite{wyz},  Wang, Yu and Zhang show that when $A$ is twisted Calabi-Yau and $B$ is homologically smooth, then $\cd(A)=\cd(B)$.
\end{enumerate}


The aim of this paper is to provide several new positive answers to Question \ref{ques}, together with application to the determination of the cohomological dimension of some Hopf algebras in some new situations (universal cosovereign Hopf algebras and free wreath products). Indeed, we show that Question \ref{ques} has a positive answer in the following cases.

\begin{enumerate}
 \item The smooth case: we show that if $A$, $B$ have bijective antipode and are (homologically) smooth, then $\cd(A)=\cd(B)$. This improves on \cite[Theorem 2.4.5]{wyz}, which assumed moreover that $A$ is twisted Calabi-Yau (and then proved that $B$ is twisted Calabi-Yau as well), see Theorem \ref{thm:mismooth}. The proof is done by carefully inspecting the arguments in   \cite{wyz}. 
\item The cosemisimple case: we show that if $A$, $B$ are cosemisimple and both have finite cohomological dimension, then $\cd(A)=\cd(B)$. See Theorem \ref{thm:moninvcosemi}. Removing the assumption $S^4={\rm id}$ from \cite{bi18} (with instead the finiteness assumption on cohomological dimensions) enables us to compute cohomological dimension in a number of new situations,  see Section \ref{sec:exam}.
\item The finite-dimensional case: we show that under natural characteristic assumption on the base field or the assumption that $A^*$ is unimodular, then $\cd(A)=\cd(B)$, see Theorem \ref{thm:mifd}. Here, since finite-dimensional Hopf algebras are self-injective, we have $\cd(A) \in \{0, \infty\}$, and the interest of Question \ref{ques} is more on the theoretical side, but, as Etingof pointed out, understanding the finite-dimensional situation should be an important aspect. The proof of Theorem \ref{thm:mifd}  is a rather direct consequence of previous results \cite{lr,eg,aegn}, but an interesting aspect is that it connects Question \ref{ques} to a weak form of an important historical conjecture of Kaplansky saying that a finite-dimensional  cosemisimple Hopf algebra is unimodular (the strong form says that a cosemisimple Hopf algebra satisfies $S^2={\rm id}$).
\end{enumerate}




Our method in the smooth and cosemimple cases is based on the fact that if 	$\mathcal M^A \simeq^{\otimes} \mathcal M^B$ as above, results by Schauenburg \cite{sc1} ensures that there exists an $A$-$B$ Galois object $R$, and then on proving that $\cd(A)=\cd(R)=\cd(B)$, which is achieved in the smooth case by following arguments of Yu \cite{yu}. In general one notices furthermore that $\cd(A)=\pd_{ {_{R}^{}\mathcal M_R^{B}}}(R)$, the projective dimension of $R$ in the category of $R$-bimodules inside $B$-comodules, and then the main question is to compare $\pd_{{_{R}^{}\mathcal M_R^{B}}}(R)$ and $\pd_{ {_{R}^{}\mathcal M_R^{}}}(R)=\cd(R)$. The main ingredient in this comparison in the cosemisimple case is a twisted averaging trick, Lemma \ref{lem:averagebimod}, that we believe to be quite non-straightforward. The averaging lemma leads to the concept of twisted separable functor we define in Section \ref{sec:tsf}, a generalization of the notion of separable functor introduced in \cite{nvdbvo}.

Of course, the above considerations lead to the following question.

\begin{question}\label{ques:galois}
Let $A$ be a Hopf algebra. Under which conditions on $A$ do we have $\cd(A)=\cd(R)$ for any     left or right $A$-Galois object $R$?
\end{question}

Theorem \ref{thm:moninvcosemi} in the cosemimple case has the drawback, in concrete situations, that we need to know in advance that both Hopf algebras have finite cohomological dimension, an information that is not necessarily avalaible.  This leads us back to our initial idea to tackle Question \ref{ques} in \cite{bi16}, which was to use an auxiliary cohomological dimension for the Hopf algebra $A$, the Gestenhaber-Schack cohomological dimension,
defined by 
$${\rm cd}_{\rm GS}(A)= {\rm max}\{n : \ \ext^n_{\yd_A^{A}}(k,V)\not=0 \ {\rm for} \ {\rm some} \ V \in \yd_A^A\}\in \mathbb N \cup \{\infty\}$$  
where $\yd_A^{A}$ is the category of Yetter-Drinfeld modules over $A$ and $k$ is the trivial Yetter-Drinfeld module.
It was shown in \cite[Theorem 5.6, Corollary 5.7]{bi16} that ${\rm cd}(A) \leq {\rm cd}_{\rm GS}(A)$ and that
 if $A$, $B$ are Hopf algebras with $\mathcal M^A \simeq^\otimes \mathcal M^B$, then $$\max({\rm cd}(A),{\rm cd}(B))\leq {\rm cd}_{\rm GS}(A)={\rm cd}_{\rm GS}(B)$$ 
 Therefore, comparing $\cd(A)$ and $\cd_{\rm GS}(A)$ can be a key step towards answers to Question \ref{ques}. In this direction, we show (Theorem \ref{thm:cd=cdgs}) that if $A$ is a cosemisimple Hopf algebra with $\cd_{\rm GS}(A)$ is finite, then $\cd(A)=\cd_{\rm GS}(A)$. Again the method of proof is based on a twisted averaging trick and uses an appropriate twisted separable functor.  Theorem \ref{thm:cd=cdgs} has, as a corollary, a weak form of Theorem \ref{thm:moninvcosemi}, which is probably sufficient in dealing with  numerous examples, see Corollary  \ref{cor:invcdgs}.



We expect that the equality $\cd(A)=\cd_{\rm GS}(A)$ holds for any cosemisimple Hopf algebra, but as already pointed in  \cite{bi16}, it cannot  hold for any Hopf algebra over any field, as we see by taking a semisimple non cosemisimple Hopf algebra over a field of positive characteristic, so we asked there whether the equality was true in characteristic zero. Etingof pointed out that it does not hold in characteristic zero even for the very simple example $A=k[x]$ with $x$ primitive. Hence we have now the following question.

\begin{question}
	What are the Hopf algebras such that $\cd(A)=\cd_{\rm GS}(A)$?
	\end{question}

The paper is organized as follows. 
Section \ref{sec:hgmon} recalls the connection between Hopf-Galois objects and monoidal equivalences and proves our first result on the monoidal invariance of the cohomological dimension, in the smooth case. Section \ref{sec:bimodcat} provides the necessary material on categories of bimodules inside categories of comodules.
Section \ref{sec:tsf} introduces the notion of twisted separable functor.This is used in Section \ref{sec:coss}
to prove Theorem \ref{thm:moninvcosemi}, our second partial positive answer to Question \ref{ques}, in the cosemimple case.
Section \ref{sec:ydcd} discusses the comparison bewteen cohomological dimension and Gerstenhaber-Schack cohomological dimension, together with the necessary material  on Yetter-Drinfeld modules. Section \ref{sec:subhopf} studies the behaviour of Gerstenhaber-Schack cohomological dimension under Hopf subalgebras in the cosemisimple case.
 Section \ref{sec:exam} is devoted to applications to some examples. 
Section \ref{sec:smoofd} discusses the finite-dimensional situation in Question \ref{ques}.  The reader only interested in this case might go directly to this section. The concluding Section \ref{sec:summ} summarizes the known positive anwers to Question \ref{ques}.

\medskip

\noindent
\textbf{Notations and conventions.}
We work over an algebraically closed field $k$. 
We assume that the reader is familiar with the theory of Hopf algebras and their tensor categories of comodules, as e.g. in \cite{egno,ks,mon}, and with the basics of homological algebra \cite{br,wei}.
If $A$ is a Hopf algebra, as usual, $\Delta$, $\varepsilon$ and $S$ stand respectively for the comultiplication, counit and antipode of $A$. We use Sweedler's notations in the standard way. The category of right $A$-comodules is denoted $\mathcal M^A$, the category of right $A$-modules is denoted $\mathcal M_A$, etc...  
The trivial (right) $A$-module is denoted $k_\varepsilon$. The set of $A$-module morphisms (resp. $A$-comodule morphisms) between two $A$-modules (resp. two $A$-comodules) $V$ and $W$ is denoted ${\rm Hom}_A(V,W)$ (resp. ${\rm Hom}^A(V,W)$).

\medskip

\noindent
\textbf{Acknowledgements.} I would like to thank Pavel Etingof for interesting discussions and pertinent remarks.

\section{Hopf-Galois objects and monoidal equivalences}\label{sec:hgmon}

\subsection{Hopf-Galois objects} Let $A$ be a Hopf algebra. Recall that a \textsl{left $A$-Galois object} is a non-zero left $A$-comodule algebra $R$ such that the canonical map
\begin{align*}
R \otimes R &\longrightarrow A \otimes R \\
x \otimes y &\longmapsto x_{(-1)} \otimes x_{(0)} y
\end{align*}
is bijective. Similarly a \textsl{right $A$-Galois object}  is a non-zero right $A$-comodule algebra such that the obvious analogue of the previous canonical map is bijective. If $B$ is another Hopf algebra, an \textsl{$A$-$B$-bi-Galois object} is an $A$-$B$-bicomodule algebra which is simultaneously left $A$-Galois and right $B$-Galois. See \cite{sc1,scsurv}.

As said in the introduction, it is important, in view of Question \ref{ques}, to determine whether a Hopf algebra and its left or right Galois object  have the same cohomological dimension, which lead us to Question \ref{ques:galois}, and for which we list a number of basic remarks.


\begin{remark}\label{rem:leqcd}
	Let $A$ be a Hopf algebra and let $R$ be a left or right $A$-Galois object. Then we have $\cd(R)\leq \cd(A)$. This follows from Stefan's spectral sequence \cite[Theorem 3.3]{ste}, or can be checked directly at the level of complexes defining Hochschild cohomlogy \cite[Theorem 7.12]{bic14}. See \cite[Lemma 2.2]{yu} as well.
\end{remark}

\begin{remark}\label{rem:taft}
	There is indeed the need of assumptions on $A$ in Question \ref{ques:galois}, as the example of the Taft algebra $H_n$ shows, which admits the matrix algebra $M_n(k)$ as a Galois object \cite{mas}, and for which we have $\cd(M_n(k))=0<\cd(H_n)=\infty$.
\end{remark}

\begin{remark}
 In the setting of Question \ref{ques:galois}, as the Weyl algebra example shows, which is a Galois object over $k[x,y]$, the good dimension to consider is indeed the Hochschild cohomological dimension, and not the global dimension. 
\end{remark}

Recall that an algebra is said to be (homologically) \textsl{smooth} if the trivial bimodule has a finite resolution by finitely generated projective bimodules. For Hopf algebras, this is equivalent to say that the trivial left or right $A$-module  has a finite resolution by finitely generated projective modules.

A partial positive anwer to Question \ref{ques:galois} was obtained by Yu \cite{yu}. Indeed, if $A$ is a Hopf algebra with bijective antipode and  $R$ be a left of right $A$-Galois object, \cite[Theorem 2.4.5]{yu}  states that if $A$ is twisted Calabi-Yau of dimension $d$, then so is $R$, and hence in particular $d=\cd(A)=\cd(R)$. Our first obervation is that, inspecting carefully the arguments in \cite{yu}, what is needed to ensure the equality of the cohomological dimensions is smoothness only. 

\begin{theorem}\label{thm:cdeqgaloissmooth}
Let $A$ be a Hopf algebra with bijective antipode, and let $R$ be a left of right $A$-Galois object.
If $A$ is smooth, then we have $\cd(A)=\cd(R)$
\end{theorem}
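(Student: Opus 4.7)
Remark \ref{rem:leqcd} already provides $\cd(R) \leq \cd(A)$, so the task is to establish the reverse inequality under the smoothness hypothesis. Bijectivity of the antipode of $A$ allows us to pass to opposite algebras if needed, so without loss of generality we assume that $R$ is a \emph{right} $A$-Galois object.

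The approach, following \cite{yu} but tracking only smoothness rather than the stronger Calabi--Yau property, proceeds via the category ${}_B\mathcal{M}_B^A$ of $B$-bimodules internal to right $A$-comodules (for $B \in \{A, R\}$). Faithfully flat Hopf--Galois descent, together with Ulbrich's description of $R$-bimodules in $A$-comodules via $M \mapsto M^{\mathrm{co}A}$, yields an exact categorical equivalence
\[
\Phi : {}_A\mathcal{M}_A^A \xrightarrow{\sim} {}_R\mathcal{M}_R^A
\]
(see \cite{sc1}) sending the regular bimodule-comodule $A$ to the regular bimodule-comodule $R$, whence
\[
\pd_{{}_A\mathcal{M}_A^A}(A) \;=\; \pd_{{}_R\mathcal{M}_R^A}(R).
\]
It remains to identify each side with the ordinary bimodule projective dimension. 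The inequality $\cd(B) \leq \pd_{{}_B\mathcal{M}_B^A}(B)$ is the easy direction: the forgetful functor ${}_B\mathcal{M}_B^A \to {}_B\mathcal{M}_B$ admits the exact right adjoint $N \mapsto N \otimes A$ (with cofree $A$-coaction), so it preserves projectives. For the reverse, one must lift a finite resolution of $B$ by finitely generated projective $B$-bimodules into the coaction-enriched category without increasing length. This is available for $B = A$ directly from smoothness, and for $B = R$ from a transfer of a finite resolution from $A$ to $R$ obtained by adapting the constructions of \cite{yu}.

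Chaining these equalities gives $\cd(A) = \pd_{{}_A\mathcal{M}_A^A}(A) = \pd_{{}_R\mathcal{M}_R^A}(R) = \cd(R)$. The principal obstacle is isolating the smoothness-only content from the Calabi--Yau arguments of \cite{yu}: one must verify that the transfer of a finite resolution of $A$ by finitely generated projective bimodules to one of $R$, and the corresponding lifting into the coaction-enriched category, use only smoothness (i.e.\ finiteness of length together with finite generation of the projectives) and not the extra Calabi--Yau rigidity invoked in loc.\ cit. Once this extraction is carried out, both cohomological dimensions are realised as projective dimensions in the equivalent categories ${}_A\mathcal{M}_A^A$ and ${}_R\mathcal{M}_R^A$, and equality follows.
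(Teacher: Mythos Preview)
Your argument rests on the claimed equivalence $\Phi : {}_A\mathcal{M}_A^A \xrightarrow{\sim} {}_R\mathcal{M}_R^A$ for $R$ a right $A$-Galois object, and this is where the proposal breaks down. Hopf--Galois descent for a right $A$-Galois object over $k$ gives $\mathcal{M}_R^A \simeq \mathrm{Vec}_k$ via $M \mapsto M^{\mathrm{co}A}$, but this does \emph{not} upgrade to an equivalence between ${}_R\mathcal{M}_R^A$ and ${}_A\mathcal{M}_A^A$: the left $R$-action does not preserve coinvariants, so the extra left module structure does not descend to a left $A$-module structure. What Schauenburg's bi-Galois theory actually produces (this is exactly Proposition~\ref{prop:cdequivgaloi} with the roles of left and right exchanged) is a Hopf algebra $L$ with $R$ an $L$-$A$-bi-Galois object and an equivalence ${}_L\mathcal{M}_L^L \simeq {}_R\mathcal{M}_R^A$, so that $\pd_{{}_R\mathcal{M}_R^A}(R) = \cd(L)$, \emph{not} $\cd(A)$. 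Were your equivalence true, the chain $\cd(A)=\pd_{{}_A\mathcal{M}_A^A}(A)=\pd_{{}_R\mathcal{M}_R^A}(R)=\cd(L)$ would already answer Question~\ref{ques} in full generality with no hypothesis at all, which should signal that something has gone wrong.

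There is a second gap: even granting a correct categorical equivalence, the identification $\pd_{{}_R\mathcal{M}_R^A}(R) = \cd(R)$ is not something one can simply ``adapt from \cite{yu}''. Comparing the projective dimension in the equivariant bimodule category with the ordinary one is exactly the problem isolated after Proposition~\ref{prop:cdequivgaloi}, and the paper only resolves it in the cosemisimple case via the twisted separable functor machinery of Sections~\ref{sec:tsf}--\ref{sec:coss}. Yu's lemmas do not address this lifting.

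The paper's proof avoids equivariant bimodule categories entirely. Smoothness of $A$ forces $\cd(A)=\max\{n:\ext^n_{{}_A\mathcal{M}}({}_\varepsilon k,A)\neq 0\}$, since $\ext^*({}_\varepsilon k,-)$ commutes with colimits; smoothness passes to $R$ by \cite[Lemma~2.4]{yu}, so likewise $\cd(R)=\max\{n:\ext^n_{{}_R\mathcal{M}_R}(R,R\otimes R)\neq 0\}$. The key computation is then the isomorphism $\ext^*_{{}_R\mathcal{M}_R}(R,R\otimes R)\simeq \ext^*_{{}_A\mathcal{M}}({}_\varepsilon k,A)\otimes R$ coming from \cite[Lemmas~2.1, 2.2]{yu}, which immediately gives $\cd(A)=\cd(R)$.
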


\begin{proof}
Since $A$ is homologically smooth, we have that $\cd(A)$ is finite,  hence $\cd(A) =   {\rm max}\{n : \ext^n_{_A\mathcal{M}}({_\varepsilon k}, F)\not=0 \ \text{for some free module $F$}\}$,  and  moreover the functor $\ext_{_A\mathcal{M}}^*({_\varepsilon k}, -)$ commutes with direct colimits, see e.g. \cite[Chapter VIII]{br}). Hence
$$\cd(A) = \pd_{_A\mathcal{M}}({_\varepsilon k}) =  {\rm max}\{n : \ \ext^n_{_A\mathcal{M}}({_\varepsilon k},A)\not=0 \ \}$$  
The algebra $R$	is homologically smooth since $A$ is,  by \cite[Lemma 2.4]{yu}, hence we have similarly
$$\cd(R) = \pd_{_R\mathcal{M}_R}(R) =  {\rm max}\{n : \ \ext^n_{_R\mathcal{M}_R}(R,R\otimes R)\not=0 \ \}$$  
We have by \cite[Lemma 2.2, Lemma 2.1]{yu}
$$\ext^*_{_R\mathcal{M}_R}(R,R\otimes R) \simeq \ext^n_{_A\mathcal{M}}({_\varepsilon k}, {_AA \otimes R}))\simeq  \ext^*_{_A\mathcal{M}}({_\varepsilon k}, {_AA) \otimes R}$$
where the first isomorphism is obtained from  \cite[Lemma 2.2, Lemma 2.1]{yu}, with the left $A$-module structure on  ${_AA \otimes R}$ being simply by multiplying in $A$ on the left,  and the second one follows from the smoothness of $R$.
Hence we obtain $\cd(A)=\cd(R)$. 

	If we start with a right Hopf-Galois object $R$ over $A$, it is well-known that $R^{\rm op}$ is a left $A$-Galois object in a natural way (if the antipode of $A$ is bijective), so that we can use the result for left $A$-Galois objects to conclude that $\cd(A)=\cd(R)$ as well.
\end{proof}	

\subsection{Monoidal equivalences} Let $A$, $B$ be Hopf algebras. Schauenburg \cite[Corollary 5.7]{sc1} has shown the equivalence of the following assertions:
\begin{enumerate}
	\item There exists an equivalence of monoidal categories 	$\mathcal M^A \simeq^{\otimes} \mathcal M^B$;
	\item There exists an $A$-$B$-bi-Galois object.
\end{enumerate}

It therefore follows that finding answers to Question \ref{ques:galois} has immediate applications to Question \ref{ques}. We thus obtain, via Theorem \ref{thm:cdeqgaloissmooth}, a partial positive answer to Question \ref{ques}, only assuming that the Hopf algebras are smooth, while \cite[Theorem 2.4.5]{wyz} assumed furthermore that one of the Hopf algebras is twisted Calabi-Yau, and then proved that the other one is twisted Calabi-Yau with the same dimension as well.



\begin{theorem}\label{thm:mismooth}
	Let  $A$, $B$ be Hopf algebras  that have equivalent  linear tensor categories of comodules: 
	$\mathcal M^A \simeq^{\otimes} \mathcal M^B$. If $A$ and $B$ are smooth and have bijective antipode,  we have $\cd(A)=\cd(B)$.
\end{theorem}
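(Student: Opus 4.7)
The plan is to reduce Theorem~\ref{thm:mismooth} to the Galois-object version already established in Theorem~\ref{thm:cdeqgaloissmooth}. Since a monoidal equivalence $\mathcal M^A \simeq^{\otimes} \mathcal M^B$ is in hand, I would first invoke Schauenburg's theorem \cite[Corollary 5.7]{sc1}, recalled in Section~\ref{sec:hgmon}, to produce an $A$-$B$-bi-Galois object $R$. This bi-Galois object is simultaneously a left $A$-Galois object and a right $B$-Galois object, which is the bridge I need between the two Hopf algebras.

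Next I would apply Theorem~\ref{thm:cdeqgaloissmooth} twice. Since $A$ has bijective antipode and is homologically smooth, and $R$ is a left $A$-Galois object, the theorem yields $\cd(A)=\cd(R)$. Since $B$ also has bijective antipode and is smooth, and $R$ is a right $B$-Galois object, the same theorem yields $\cd(B)=\cd(R)$. Chaining these two equalities gives $\cd(A)=\cd(B)$, as required.

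There is really no obstacle once Theorem~\ref{thm:cdeqgaloissmooth} is in place: the statement is a formal corollary combining the Schauenburg reconstruction with the Galois-invariance of cohomological dimension under the smoothness hypothesis. The only point that deserves a brief verbal check is that the hypotheses of Theorem~\ref{thm:cdeqgaloissmooth} are genuinely available on both sides, namely bijective antipode and smoothness for $A$ and $B$ separately; both are explicit assumptions of the theorem. Hence the proof reduces to a two-line argument and does not require revisiting the inspection of Yu's results carried out in the proof of Theorem~\ref{thm:cdeqgaloissmooth}.
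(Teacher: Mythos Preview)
Your proposal is correct and follows exactly the same approach as the paper: invoke Schauenburg's result to obtain an $A$-$B$-bi-Galois object $R$, then apply Theorem~\ref{thm:cdeqgaloissmooth} on each side to get $\cd(A)=\cd(R)=\cd(B)$. The paper's proof is the one-line version of what you wrote.
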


\begin{proof}
Since there exists  an $A$-$B$-bi-Galois object $R$, we have $\cd(A)=\cd(R)=\cd(B)$ by Theoren \ref{thm:cdeqgaloissmooth}.
	\end{proof}

\begin{remark}
It is pointed out in Remark \ref{rem:taft} that the matrix algebra $M_n(k)$ is a Galois object over the Taft algebra $H_n$, and in fact $M_n(k)$ is an $H_n$-$H_n$-bi-Galois object \cite{sc00}. This indicates that the approach via Hopf-Galois objects cannot cover all the possible situations in Question \ref{ques}.
\end{remark}

\section{Equivariant bimodule categories and projective dimensions}\label{sec:bimodcat}

In this section we explain how one can use bimodule categories in order to obtain informations on Question \ref{ques:galois} and hence on Question \ref{ques}.

 Let $A$ be  a Hopf algebra, let $R$ be a right $A$-comodule algebra (recall that this means that $R$ is an algebra in the monoidal category $\mathcal M^{A}$) and let $_{R}^{}\mathcal M_R^{A}$ be the category of $A$-bimodules in the category $\mathcal M^{A}$: the objects are the $A$-comodules $V$ with an $R$-bimodule structure having the Hopf bimodule  compatibility conditions
$$ (x\cdot v)_{(0) }\otimes  (x\cdot v)_{(1)}= x_{(0)}\cdot v_{(0)} \otimes x_{(1)} v_{(1)}, \ 
(v\cdot x)_{(0) }\otimes  (v\cdot x)_{(1)}= v_{(0)}\cdot x_{(0)} \otimes v_{(1)}x_{(1)}$$
for any $x\in R$ and $v\in V$. The morphisms are the $A$-colinear and $R$-bilinear maps. The category $ {_{R}^{}\mathcal M_R^{A}}$ is obviously abelian, and the tensor product of bimodules induces a monoidal strucure on it.

The following basic property is certainly well-known, and a straightforward verification.

\begin{proposition} \label{prop:adjbimod} Let $A$ be a Hopf algebra and let $R$ be a right $A$-comodule algebra.
	\begin{enumerate}	
		\item 	The forgetful functor $\Omega^{A} : {_{R}^{}\mathcal M_R^{A}} \to \mathcal M^{A}$ has a left adjoint, which associates to a comodule $V$ the object $R\otimes V \otimes R$ whose bimodule structure is given by left and right multiplication of $R$ and whose comodule structure is the tensor product of the underlying comodules. 
		\item 	The forgetful functor $\Omega_{R} : {_{R}^{}\mathcal M_R^{A}} \to {_{R}^{}\mathcal M_R^{}}$ has a right adjoint, which associates to an $R$-bimodule $V$ the object $V\otimes A$ whose $R$-bimodule structure is given by
		$$x\cdot (v\otimes a)= x_{(0)}\cdot v\otimes x_{(1)}a, \quad (v\otimes a)\cdot x = v\cdot x_{(0)} \otimes ax_{(1)}$$ 
		and whose $A$-comodule structure is induced by the comultiplication of $A$. 
	\end{enumerate}
\end{proposition}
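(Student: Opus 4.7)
The plan is, in each case, to exhibit the unit and counit of the adjunction explicitly and to reduce the verification to three ingredients: that the candidate adjoint really lands in ${_R\mathcal{M}_R^A}$, that the unit and counit are morphisms in the relevant categories, and that the two triangle identities hold. Each of these is a direct Sweedler calculation using only that $R$ is a right $A$-comodule algebra, the Hopf bimodule axioms, and coassociativity of $\Delta$.

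For part (1), I would take as unit the map $\eta_V : V \to R \otimes V \otimes R$, $v \mapsto 1 \otimes v \otimes 1$, and as counit the multiplication-from-both-sides map $\varepsilon_W : R \otimes \Omega^A(W) \otimes R \to W$, $x \otimes w \otimes y \mapsto x \cdot w \cdot y$. That $R \otimes V \otimes R$ is a Hopf bimodule follows by expanding $(xr \otimes v \otimes s)_{(0)} \otimes (xr \otimes v \otimes s)_{(1)}$ and using that the coaction $R \to R \otimes A$ is multiplicative; the right-hand compatibility is symmetric. The counit $\varepsilon_W$ is manifestly $R$-bilinear, and its $A$-colinearity is exactly the Hopf bimodule axiom for $W$. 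The triangle identities are then immediate from the defining formulas.

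For part (2), I would take as counit $\varepsilon_V : V \otimes A \to V$, $v \otimes a \mapsto \varepsilon(a) v$, and as unit the coaction itself, $\eta_W : W \to W \otimes A$, $w \mapsto w_{(0)} \otimes w_{(1)}$. That $V \otimes A$ is a Hopf bimodule with the stated structure reduces to coassociativity of $\Delta$ (for the left-hand axiom, both sides expand to $x_{(0)} \cdot v \otimes x_{(1)} a_{(1)} \otimes x_{(2)} a_{(2)}$). The only point that genuinely uses the Hopf bimodule hypothesis is $R$-bilinearity of $\eta_W$: the identity
$$\eta_W(x \cdot w) = (x \cdot w)_{(0)} \otimes (x \cdot w)_{(1)} = x_{(0)} \cdot w_{(0)} \otimes x_{(1)} w_{(1)} = x \cdot \eta_W(w)$$
is exactly the left Hopf bimodule axiom for $W$, and the right version is symmetric; $A$-colinearity of $\eta_W$ is just coassociativity. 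Both triangle identities then follow immediately.

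I do not anticipate any genuine obstacle: the content of the statement is precisely that the Hopf bimodule compatibility conditions are exactly what makes the evident candidate units and counits into well-defined maps in the categories ${_R\mathcal{M}_R^A}$ and ${_R\mathcal{M}_R}$, which is presumably why the author describes the verification as straightforward.
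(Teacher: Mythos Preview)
Your proposal is correct and is exactly the straightforward verification the paper alludes to; the paper does not actually write out a proof, merely noting that the proposition is well-known and routine. Your explicit unit/counit description and the observation that the Hopf bimodule axioms are precisely what make $\eta_W$ (respectively $\varepsilon_W$) bilinear (respectively colinear) is the intended content.
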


Objects in  $ {_{R}^{}\mathcal M_R^{A}}$ that are images of the above left adjoint functor are called \textsl{free}, they are indeed free as bimodules. Any object in $ {_{R}^{}\mathcal M_R^{A}}$ is a quotient of a free object. 

As usual, if $\mathcal C$ is an abelian category having enough projective objects, the notation $\pd_{\mathcal{C}}(V)$ refers to the projective dimension of the object $V$, i.e. the smallest length of a resolution of $V$ by projective objects in $\mathcal{C}$, with, as well
$$\pd_{\mathcal{C}}(V) =  {\rm max}\{n : \ \ext^n_{\mathcal{C}}(V,W)\not=0 \ \text{for some object $W$ in $\mathcal C$}\}$$

The following corollary is a  direct consequence of Proposition \ref{prop:adjbimod} and of the standard properties of pairs of adjoint functors.

\begin{corollary}Let $A$ be a Hopf algebra and let $R$ be a right $A$-comodule algebra.
\begin{enumerate}
	\item The category $ {_{R}^{}\mathcal M_R^{A}}$ has enough injective objects, since  ${_{R}^{}\mathcal M_R^{}}$ has, and we have, for any object $V$ in $ {_{R}^{}\mathcal M_R^{A}}$ and any $R$-bimodule $W$:
	$$\ext^*_{ {_{R}^{}\mathcal M_R^{}}}(\Omega_R(V), W) \simeq \ext^*_{ {_{R}^{}\mathcal M_R^{A}}}(V, W \otimes A)$$
	\item If $\mathcal M^{A}$ has enough projective objects (in which case one says that $A$ is co-Frobenius), so has $ {_{R}^{}\mathcal M_R^{A}}$. In that case, the previous isomorphism ensures that  for any object $V$ in $ {_{R}^{}\mathcal M_R^{A}}$, we have
	$$\pd_{ {_{R}^{}\mathcal M_R^{}}}(\Omega_R(V)) \leq \pd_{ {_{R}^{}\mathcal M_R^{A}}}(V)$$
	\item If $A$ is cosemisimple, then  ${_{R}^{}\mathcal M_R^{A}}$ has enough projective objects, and the projective objects are the direct summands of the free ones.
\end{enumerate}
\end{corollary}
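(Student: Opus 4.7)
The plan is to deduce all three parts formally from Proposition \ref{prop:adjbimod}, by exploiting the interplay between the two adjoint pairs and the fact that all the functors involved are exact. Both forgetful functors $\Omega^{A}$ and $\Omega_R$ are exact (they forget structure), while the tensor-over-$k$ functors $V \mapsto R \otimes V \otimes R$ and $V \mapsto V \otimes A$ are exact as well. So in each adjunction, one side preserves projectives and the other preserves injectives, and that is the only engine needed.

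For (1), I would first note that since $\Omega_R$ is exact, its right adjoint $-\otimes A$ preserves injectives. The unit of the adjunction at $V \in {_{R}^{}\mathcal M_R^{A}}$ is the coaction $V \to V \otimes A$, $v \mapsto v_{(0)} \otimes v_{(1)}$, which is injective by counitality. Given an embedding $\Omega_R(V) \hookrightarrow I$ of the underlying bimodule into an injective $R$-bimodule $I$, composing with the unit yields $V \hookrightarrow I \otimes A$ in ${_{R}^{}\mathcal M_R^{A}}$, and $I \otimes A$ is injective there; hence there are enough injectives. The Ext-isomorphism then follows by choosing an injective resolution $W \to J^\bullet$ in ${_{R}^{}\mathcal M_R^{}}$: applying $-\otimes A$ gives an injective resolution of $W \otimes A$ in ${_{R}^{}\mathcal M_R^{A}}$, and the Hom-adjunction yields the isomorphism of complexes whose cohomology is the desired Ext.

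For (2), I would symmetrically use the left adjoint $V \mapsto R \otimes V \otimes R$ of $\Omega^A$, which preserves projectives since $\Omega^A$ is exact. Under the co-Frobenius hypothesis, given $M \in {_{R}^{}\mathcal M_R^{A}}$, a projective cover $P \twoheadrightarrow \Omega^A(M)$ in $\mathcal M^A$ and the canonical (colinear) bimodule multiplication map $R \otimes \Omega^A(M) \otimes R \twoheadrightarrow M$ combine to produce a surjection onto $M$ from the projective object $R \otimes P \otimes R$. For the projective-dimension inequality, the point is that $\Omega_R$ preserves projectives (its right adjoint $-\otimes A$ is exact) and is itself exact, so it carries any projective resolution in ${_{R}^{}\mathcal M_R^{A}}$ of length $n$ to a projective resolution in ${_{R}^{}\mathcal M_R^{}}$ of length at most $n$. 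Part (3) is then almost immediate: cosemisimplicity makes every object of $\mathcal{M}^A$ projective, so (2) applies; and any projective $P$ in ${_{R}^{}\mathcal M_R^{A}}$ admits the surjection from its free hull $R \otimes \Omega^A(P) \otimes R$ that must split by projectivity, exhibiting $P$ as a direct summand of a free object. I do not anticipate any real obstacle; the whole statement is a formal play with adjoint pairs, the only care being to verify exactness of the appropriate halves of each adjunction and injectivity/surjectivity of the relevant units and counits.
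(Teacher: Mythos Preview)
Your proposal is correct and is precisely the kind of standard adjoint-functor argument the paper has in mind; the paper itself gives no proof beyond stating that the corollary is ``a direct consequence of Proposition~\ref{prop:adjbimod} and of the standard properties of pairs of adjoint functors.'' The only tiny quibbles are terminological: in (2) you should say ``an epimorphism from a projective'' rather than ``projective cover'' (co-Frobenius only guarantees enough projectives), and the paper phrases the $\pd$-inequality as coming from the Ext-isomorphism of (1) rather than from $\Omega_R$ preserving projectives, but these are two sides of the same coin.
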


The connection between our problem and bimodules is now given by the following result.

\begin{proposition}\label{prop:cdequivgaloi}
 Let $A$ be a Hopf algebra, let $R$ be a left $A$-Galois object, and let $B$ a Hopf algebra such that $R$ is $A$-$B$-bi-Galois. Then the category ${_{R}^{}\mathcal M_R^{B}}$ has enough projective objects, and we have
$$\cd(A) =  \pd_{ {_{R}^{}\mathcal M_R^{B}}}(R)\geq  \pd_{ {_{R}^{}\mathcal M_R^{}}}(R)=\cd(R)$$
\end{proposition}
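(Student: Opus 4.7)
The plan proceeds in two parts. The first identifies $\cd(A)$ with $\pd_{_R\mathcal M_R^B}(R)$ and establishes enough projectives in $_R\mathcal M_R^B$, via equivalences of abelian categories. The second derives the inequality from the adjunction in Proposition \ref{prop:adjbimod}.

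For the first part, I would start from the classical equivalence of abelian categories coming from the fundamental theorem for Hopf bimodules,
$$_A\mathcal M_A^A \simeq \,_A\mathcal M,$$
whose quasi-inverse sends a left $A$-module $V$ to $V\otimes A$, equipped with left action $a(v\otimes b)=a_{(1)}v\otimes a_{(2)}b$, right multiplication on the second tensorand, and right coaction induced by $\Delta$. Under this equivalence the trivial module $k_\varepsilon$ matches the regular Hopf bimodule $A$, whence $_A\mathcal M_A^A$ has enough projectives and $\pd_{_A\mathcal M_A^A}(A)=\pd_{_A\mathcal M}(k_\varepsilon)=\cd(A)$. Next, from Schauenburg's theory applied to the bi-Galois object $R$, one has an equivalence of abelian categories
$$_A\mathcal M_A^A \simeq \,_R\mathcal M_R^B$$
implemented by cotensoring with $R$ over $A$ (and the analogous functor in the opposite direction using the $A$-$B$-bi-Galois inverse of $R$), under which $A$ matches $R$. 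Composing with the previous equivalence yields an equivalence $_R\mathcal M_R^B\simeq \,_A\mathcal M$ sending $R\leftrightarrow k_\varepsilon$, which simultaneously produces the enough-projectives claim for $_R\mathcal M_R^B$ and the equality $\pd_{_R\mathcal M_R^B}(R)=\cd(A)$.

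For the second part, I would apply Proposition \ref{prop:adjbimod}(2) with the role of the Hopf algebra played by $B$ and that of the comodule algebra by $R$. The right adjoint $-\otimes B$ of $\Omega_R$, together with the obvious identity $\Omega_R(R)=R$, yields via the immediately preceding Corollary a natural isomorphism
$$\ext^*_{_R\mathcal M_R}(R,W)\simeq \ext^*_{_R\mathcal M_R^B}(R,W\otimes B)$$
for every $R$-bimodule $W$. This forces $\pd_{_R\mathcal M_R}(R)\le \pd_{_R\mathcal M_R^B}(R)$; the remaining equality $\pd_{_R\mathcal M_R}(R)=\cd(R)$ is the standard characterization of the Hochschild cohomological dimension as the projective dimension of the regular bimodule.

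The main point that deserves care is the bi-Galois equivalence $_A\mathcal M_A^A \simeq \,_R\mathcal M_R^B$ sending $A$ to $R$: while Schauenburg's monoidal equivalence $^A\mathcal M^A\simeq \,^B\mathcal M^B$ is well established, upgrading it to the present bimodule-in-comodule categories and matching the distinguished objects requires an explicit construction via cotensor functors (or a precise citation). Once that is in place, the rest of the argument is routine transport of projective resolutions across equivalences and adjunctions.
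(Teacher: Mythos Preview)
Your proposal is correct and follows essentially the same route as the paper: compose the Hopf-bimodule equivalence ${_A\mathcal M}\simeq {_A\mathcal M_A^A}$ with the bi-Galois cotensor equivalence ${_A\mathcal M_A^A}\simeq {_R\mathcal M_R^B}$ to match $_\varepsilon k\leftrightarrow R$, transporting both enough projectives and the projective dimension. The only minor difference is in the inequality: the paper simply invokes Remark~\ref{rem:leqcd} (Stefan's spectral sequence), whereas you derive $\pd_{_R\mathcal M_R}(R)\leq \pd_{_R\mathcal M_R^B}(R)$ from the $\ext$-isomorphism in the Corollary to Proposition~\ref{prop:adjbimod}; both arguments are valid and already available in the paper.
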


\begin{proof}
 First recall that it follows from the right version of \cite[Theorem 5.7]{sc94} (the structure theorem for Hopf modules) that the functor 
\begin{align*}
_A\mathcal M &\longrightarrow {_A\mathcal M_A^{A}} \\
V &\longmapsto V \circledcirc A
\end{align*} 
is a monoidal equivalence of categories, where $V\circledcirc A$ is $V\otimes A$ as a vector space, has the tensor product left $A$-module structure and the right module and comodule structures are induced by the multiplication and comultiplication of $A$ respectively. This monoidal equivalence transforms the trivial module $_\varepsilon k$ into the $A$-bimodule $A$. 

Now let $R$ be an $A$-$B$-bi-Galois object.
The  corresponding monoidal equivalence $\mathcal{M}^{A} \simeq ^\otimes \mathcal M^B$ in \cite{sc1} is given by the cotensor product $-\square_A R$, and sends  $A$ to $R$, and thus clearly  induces an equivalence between the bimodule categories  ${_{A}^{}\mathcal M_A^{A}}$ and ${_{R}^{}\mathcal M_R^{B}}$. Composing with the equivalence at the beginning of the proof, we get a monoidal equivalence
	$$_A\mathcal M \simeq^{\otimes} {_{R}^{}\mathcal M_R^{B}}$$
	sending $_\varepsilon k$ to $R$. It is then clear that $  {_{R}^{}\mathcal M_R^{B}}$ has enough projective objects, and  that $\cd(A) = \pd_A({_\varepsilon k})= \pd_{ {_{R}^{}\mathcal M_R^{B}}}(R)$. The last inequality has been given in Remark \ref{rem:leqcd}. 
\end{proof}

It is therefore crucial to compare $\pd_{ {_{R}^{}\mathcal M_R^{B}}}(R)$ and $\pd_{ {_{R}^{}\mathcal M_R^{}}}(R)$ for $R$ a right $B$-Galois object. Notice that the problem makes sense and is interesting for any comodule algebra, as soon as ${_{R}^{}\mathcal M_R^{B}}$ has enough projective projects. This is the motivation for the tools we develop in the next section.

\section{Twisted separable functors}\label{sec:tsf}

In this section we introduce the notion of twisted separable functor. 

If $\mathcal C$ is category, we say that a subclass $\mathcal{F}$ of objects of $\mathcal{C}$ is generating if for every object $V$ of $\mathcal{C}$, there exists an object $P$ of $\mathcal{F}$ together with an epimorphism $P \to V$.

\begin{definition}\label{def:tsf}
Let $\mathcal C$ and $\mathcal D$ be some categories. We say that a functor $F : \mathcal{C} \to \mathcal{D}$ is \textsl{twisted separable} if there exist
\begin{enumerate}
	\item an autoequivalence $\Theta$ of the category $\mathcal{D}$;
	\item a generating subclass $\mathcal{F}$ of objects of $\mathcal{C}$  together with,  for any object $P$ of $\mathcal{F}$, an isomorphism $\theta_P : F(P)\to \Theta F(P)$;
	\item a natural morphism $\textbf{M}_{-,-} : \Hom_{\mathcal{D}}(F(-),\Theta F(-)) \to \Hom_{\mathcal{C}}(-,-)$ such that for any object $P$ of $\mathcal{F}$, we have $M_{P,P}(\theta_P)= {\rm id}_P$.
\end{enumerate}
\end{definition}

The naturality condition above means that for any morphisms $\alpha : V'\to V$, $\beta : W \to W'$ in $\mathcal{C}$ and any morphism $f : F(V) \to \Theta F(W)$ in $\mathcal{D}$, we have 
$$\beta \circ {\bf M}_{V,W}(f) \circ \alpha = {\bf M}_{V',W'}(\Theta F(\beta) \circ f \circ F(\alpha))$$

When $\mathcal{F}$ is the whole class of objects of $\mathcal C$, the autoequivalence $\Theta$ is the identity and the isomorphisms $\theta_P$ all are the identity, we get the notion of separable functor from \cite{nvdbvo}, which is known to be provide a convenient setting for various types of generalized Maschke theorems, see \cite{camizh}. A basic example of a separable functor is, when $A$ is a cosemisimple Hopf algebra, the forgetful functor $\mathcal{M}^{A} \to {\rm Vec}_k$: this is the content of Proposition \ref{prop:avecomod} in the next section.

Our motivation to introduce the present notion of twisted separable functor is the following result. 

\begin{proposition}\label{prop:pdtsf}
Let $\mathcal{C}$ and $\mathcal{D}$ be abelian categories having enough projective objects, and let $F : \mathcal{C} \to \mathcal{D}$ be a functor. Assume that the following conditions hold:
\begin{enumerate}
	\item the functor $F$ is exact and preserves projective objects;
	\item the functor $F$ is twisted separable and $\mathcal F$, the corresponding class of objects of  $\mathcal{C}$, contains a generating subclass $\mathcal F_0$ consisting of projective objects.
\end{enumerate}
Then,  for any object $V$ of $\mathcal{C}$ such that $\pd_{\mathcal{C}}(V)$ is finite, we have $\pd_{\mathcal{C}}(V) = \pd_{\mathcal{D}}(F(V))$. 
\end{proposition}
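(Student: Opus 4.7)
The strategy is to prove the two opposite inequalities separately.

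The inequality $\pd_{\mathcal{D}}(F(V)) \leq \pd_{\mathcal{C}}(V)$ is straightforward. Since $m := \pd_{\mathcal C}(V)$ is assumed finite, pick a finite projective resolution $0 \to P_m \to P_{m-1} \to \cdots \to P_0 \to V \to 0$ in $\mathcal C$. Applying the exact functor $F$ (which preserves projectives by hypothesis~(1)) yields a projective resolution $0 \to F(P_m) \to \cdots \to F(P_0) \to F(V) \to 0$ of $F(V)$ in $\mathcal D$ of length $m$, so $\pd_{\mathcal D}(F(V)) \leq m$.

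For the reverse inequality $\pd_{\mathcal C}(V) \leq \pd_{\mathcal D}(F(V))$, set $n := \pd_{\mathcal D}(F(V))$. The plan is to compare $\ext^*_{\mathcal C}(V,W)$ with $\ext^*_{\mathcal D}(F(V), \Theta F(W))$ using $\mathbf{M}$. Since $\mathcal F_0$ is a generating subclass of projective objects, one can choose a projective resolution $P_\bullet \to V$ of $V$ in $\mathcal C$ with every $P_i \in \mathcal F_0 \subseteq \mathcal F$. Applying $F$ produces a projective resolution $F(P_\bullet) \to F(V)$ in $\mathcal D$, so for any $W \in \mathcal C$, the complexes
$$D^\bullet := \Hom_{\mathcal C}(P_\bullet, W) \qquad \text{and} \qquad C^\bullet := \Hom_{\mathcal D}(F(P_\bullet), \Theta F(W))$$
compute $\ext^*_{\mathcal C}(V,W)$ and $\ext^*_{\mathcal D}(F(V), \Theta F(W))$ respectively. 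The natural transformation $\mathbf M$ defines a chain map $\mathbf M_\bullet : C^\bullet \to D^\bullet$. Moreover, for each $P_i \in \mathcal F$ the assignment $\Phi_i(g) := \Theta F(g) \circ \theta_{P_i}$ yields a level-wise section of $\mathbf M_i$: indeed, naturality with $\alpha = \id_{P_i}$ and $\beta = g$ combined with the axiom $\mathbf{M}_{P_i,P_i}(\theta_{P_i}) = \id_{P_i}$ gives $\mathbf M_i \circ \Phi_i = \id_{D^i}$. Consequently $\mathbf M_\bullet$ is surjective at each degree.

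The main technical obstacle is to deduce from this level-wise splitting that $\mathbf M_\bullet$ induces a surjection on cohomology $H^\bullet(C^\bullet) \twoheadrightarrow H^\bullet(D^\bullet)$. This would complete the argument: for $i > n$ we would have $H^i(C^\bullet) = 0$, and hence $\ext^i_{\mathcal C}(V,W) = H^i(D^\bullet) = 0$ for every $W$, whence $\pd_{\mathcal C}(V) \leq n$ as required. The subtle point is that the sections $\Phi_i$ need not assemble into a chain map, because the isomorphisms $\theta_{P_i}$ are not postulated to be natural with respect to the differentials $d_{i+1}: P_{i+1} \to P_i$. The obstruction $\Theta F(d_{i+1})\circ \theta_{P_{i+1}} - \theta_{P_i} \circ F(d_{i+1})$ nonetheless lies in $\ker \mathbf M_{P_{i+1}, P_i}$, as a direct application of the naturality of $\mathbf M$ in both variables together with the identity $\mathbf M(\theta_P) = \id_P$ shows. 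One then exploits this fact, together with the specific form of $\Phi_i$ and the freedom to adjust representatives by elements of $\ker \mathbf M_\bullet$ inside each $\Hom_{\mathcal D}(F(P_i), \Theta F(W))$, to show that every cocycle in $D^\bullet$ admits a cocycle lift in $C^\bullet$—which is exactly the desired cohomological surjectivity. The expected hard part is organizing this cocycle-lifting argument cleanly; once this is done, the conclusion follows via a standard dimension-shift argument on Ext groups.
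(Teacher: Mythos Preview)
Your first inequality and your setup for the second are fine, but the ``main technical obstacle'' you identify is a genuine gap, and the paper explicitly flags your route as the one that does \emph{not} obviously go through. From the level-wise sections $\Phi_i(g)=\Theta F(g)\circ\theta_{P_i}$ you get a degreewise split short exact sequence of complexes $0\to K^\bullet\to C^\bullet\to D^\bullet\to 0$, but the connecting maps $H^i(D^\bullet)\to H^{i+1}(K^\bullet)$ need not vanish, so surjectivity on cohomology does not follow. Your observation that the obstructions $\Theta F(d_{i+1})\circ\theta_{P_{i+1}}-\theta_{P_i}\circ F(d_{i+1})$ lie in $\ker\mathbf M$ is correct, but it does not by itself produce a cocycle lift: given a cocycle $g\in D^i$, you would need $k\in\ker\mathbf M_i$ with $k\circ F(d_{i+1})=-\Phi_i(g)\circ F(d_{i+1})$, and nothing in the axioms guarantees such a $k$ exists. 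The sentence ``one then exploits this fact\ldots'' is precisely where an idea is missing.

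The paper's fix is to move the twist to the \emph{target} rather than the source. Instead of trying to prove surjectivity on $\ext$ for arbitrary $W$, one restricts to $W\in\mathcal F$ and lifts a cocycle $f\in\Hom_{\mathcal C}(P_i,W)$ by $\theta_W\circ F(f)$ (not $\Theta F(f)\circ\theta_{P_i}$). Naturality of $\mathbf M$ in the first variable plus $\mathbf M_{W,W}(\theta_W)=\id_W$ gives $\mathbf M_{P_i,W}(\theta_W\circ F(f))=f$, and now the cocycle condition is automatic: if $f\circ d_{i+1}=0$ then $\theta_W\circ F(f)\circ F(d_{i+1})=\theta_W\circ F(f\circ d_{i+1})=0$. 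No compatibility between the $\theta_{P_i}$ and the differentials is needed. This yields surjections $\ext^*_{\mathcal D}(F(V),\Theta F(W))\twoheadrightarrow\ext^*_{\mathcal C}(V,W)$ only for $W\in\mathcal F$, so one finishes with the elementary lemma that when $\pd_{\mathcal C}(V)$ is finite and $\mathcal F_0$ is a generating class of projectives, $\pd_{\mathcal C}(V)=\max\{n:\ext^n_{\mathcal C}(V,F)\neq 0\text{ for some }F\in\mathcal F_0\}$; this is where the finiteness hypothesis on $\pd_{\mathcal C}(V)$ actually enters. Note in particular that the resolution $P_\bullet$ need not be taken in $\mathcal F_0$ at all.
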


 We begin with some preliminaries. 

\begin{lemma}\label{lem:pdfree}
	Let $\mathcal{C}$ be an abelian category having enough projective objects, and let $\mathcal{F}_0$ be a generating subclass of $\mathcal{C}$ consisting of projective objects. If  $\pd_{\mathcal{C}}(V)$ is finite, we have 
	$$\pd_{\mathcal{C}}(V) =  {\rm max}\{n : \ \ext^n_{\mathcal{C}}(V,F)\not=0 \ \text{for some object $F$ in $\mathcal F_0$}\}$$  
\end{lemma}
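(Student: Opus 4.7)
My plan is to show the two inequalities separately. Write $d = \pd_{\mathcal{C}}(V)$ and let $d'$ denote the supremum on the right-hand side of the claimed equality. The inequality $d' \leq d$ is immediate from the definition of projective dimension: indeed $\ext^n_{\mathcal{C}}(V, -) = 0$ for every $n > d$, and in particular it vanishes on objects of $\mathcal F_0$.

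For the reverse inequality $d \leq d'$, the idea is a one-step dimension-shifting argument that exploits the generating property of $\mathcal F_0$. Using the characterisation of projective dimension recalled just before the statement, I would pick some object $W$ of $\mathcal{C}$ with $\ext^d_{\mathcal{C}}(V,W) \neq 0$. Because $\mathcal F_0$ is generating, I can choose an epimorphism $P \twoheadrightarrow W$ with $P \in \mathcal F_0$, and denote its kernel by $K$. The resulting short exact sequence $0 \to K \to P \to W \to 0$ yields a fragment of long exact sequence
$$\ext^d_{\mathcal{C}}(V, P) \to \ext^d_{\mathcal{C}}(V, W) \to \ext^{d+1}_{\mathcal{C}}(V, K).$$
The last term vanishes because $d = \pd_{\mathcal{C}}(V)$ and this number is finite; hence the preceding map is surjective, and the non-vanishing of $\ext^d_{\mathcal{C}}(V, W)$ forces $\ext^d_{\mathcal{C}}(V, P) \neq 0$ for some $P \in \mathcal F_0$. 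This yields $d' \geq d$ and completes the proof.

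I do not expect any serious obstacle: the whole argument amounts to a single application of the long exact sequence, and its validity hinges precisely on the hypothesis $\pd_{\mathcal{C}}(V) < \infty$, which is what provides the degree-$(d+1)$ vanishing needed to transfer non-vanishing from $W$ to an object of $\mathcal F_0$. Without the finiteness assumption the surjectivity step would fail, since the connecting map could land injectively in a nonzero $\ext^{d+1}_{\mathcal{C}}(V, K)$.
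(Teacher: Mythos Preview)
Your proposal is correct and follows essentially the same approach as the paper: both arguments cover an arbitrary object by one from $\mathcal{F}_0$ and use the vanishing of $\ext^{d+1}_{\mathcal{C}}(V,-)$ to conclude that $\ext^d_{\mathcal{C}}(V,-)$ is right exact, forcing non-vanishing on some $P \in \mathcal{F}_0$. The paper states this more tersely (simply observing that $\ext^d_{\mathcal{C}}(V,-)$ is right exact), while you unpack the long exact sequence explicitly, but the content is identical.
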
 

 \begin{proof}
 	Every object $X$ fits into an exact sequence
 	$0 \to W \to F \to X\to 0$
 	with $F$ an object of $\mathcal F_0$, hence projective. The result is thus obtained via a classical argument: if $n=\pd_{\mathcal{C}}(V)$, the long $\ext$ exact sequence gives that the functor $\ext_{\mathcal{C}}^n(V,-)$ is right exact, and hence $\ext_{\mathcal{C}}^n(V,F)\not = 0$  for some object $F$ of $\mathcal{F}_0$. 
 	\end{proof}

\begin{lemma}\label{lem:embedextF}
	Assume we are in the setting of Proposition \ref{prop:pdtsf}. For any objects $X,W$ of $\mathcal{C}$, we have a morphism
	$$\ext_{\mathcal{D}}^*(F(X),\Theta F(W)) \longrightarrow \ext_{\mathcal{C}}^*(X,W)$$
	which is surjective if $W$ is an object of $\mathcal F$.
\end{lemma}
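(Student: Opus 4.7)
The plan is to compute both Ext groups from a single projective resolution of $X$, and then to promote the twisted separability datum $\mathbf{M}$ to a morphism of cochain complexes.

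First I fix a projective resolution $P_\bullet \to X$ in $\mathcal{C}$, with differentials $d_n : P_n \to P_{n-1}$. Since $F$ is exact and sends projective objects to projective objects, $F(P_\bullet) \to F(X)$ is a projective resolution in $\mathcal{D}$, so that
$$\ext_{\mathcal{D}}^n(F(X),\Theta F(W)) = H^n\bigl(\Hom_{\mathcal{D}}(F(P_\bullet),\Theta F(W))\bigr), \qquad \ext_{\mathcal{C}}^n(X,W) = H^n\bigl(\Hom_{\mathcal{C}}(P_\bullet,W)\bigr).$$
Applying the naturality of $\mathbf{M}$ to $\alpha = d_{n+1} : P_{n+1} \to P_n$ and $\beta = \id_W$ yields, for every $g : F(P_n) \to \Theta F(W)$,
$$\mathbf{M}_{P_{n+1},W}(g \circ F(d_{n+1})) = \mathbf{M}_{P_n,W}(g) \circ d_{n+1},$$
so the maps $\mathbf{M}_{P_\bullet,W}$ commute with the Hom differentials. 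Passing to cohomology produces the morphism $\ext_{\mathcal{D}}^*(F(X),\Theta F(W)) \to \ext_{\mathcal{C}}^*(X,W)$, which one also checks to be natural in $X$ and $W$ in the obvious way.

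To prove surjectivity under the hypothesis $W \in \mathcal{F}$, I would exploit the distinguished isomorphism $\theta_W : F(W) \to \Theta F(W)$ with $\mathbf{M}_{W,W}(\theta_W) = \id_W$. Given any $n$-cocycle $f : P_n \to W$, I define the explicit lift $\tilde f := \theta_W \circ F(f) \in \Hom_{\mathcal{D}}(F(P_n),\Theta F(W))$. Functoriality of $F$ immediately gives $\tilde f \circ F(d_{n+1}) = \theta_W \circ F(f \circ d_{n+1}) = 0$, so $\tilde f$ is itself a cocycle. Applying naturality of $\mathbf{M}$ now with $\alpha = f : P_n \to W$, $\beta = \id_W$, starting from the element $\theta_W$, gives
$$\mathbf{M}_{P_n,W}(\tilde f) = \mathbf{M}_{W,W}(\theta_W) \circ f = \id_W \circ f = f,$$
so $\mathbf{M}$ is surjective already at the level of cocycles, hence a fortiori on cohomology.

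The argument is essentially formal once the definition of twisted separability is unpacked, and no genuine obstacle is expected; the point is that the naturality axiom is tailored exactly so that $\mathbf{M}$ becomes a chain map in the first variable and so that the assignment $f \mapsto \theta_W \circ F(f)$ provides a canonical set-theoretic section for every $W \in \mathcal{F}$. The autoequivalence $\Theta$ plays no active role beyond providing the target category for the $\theta_W$'s and being automatically exact and projective-preserving. Lemma \ref{lem:embedextF} is best read as the technical step that will, via Lemma \ref{lem:pdfree} applied to $\mathcal{F}_0$, let one reduce the computation of $\pd_{\mathcal{C}}(V)$ to Ext with second argument in $\mathcal{F}$, and thus yield Proposition \ref{prop:pdtsf}.
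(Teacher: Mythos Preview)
Your proof is correct and follows essentially the same route as the paper: both compute Ext via a fixed projective resolution $P_\bullet \to X$, use naturality of $\mathbf{M}$ with $\alpha = d_{n+1}$ to get a chain map, and for $W \in \mathcal{F}$ exhibit the explicit lift $\theta_W \circ F(f)$ of a cocycle $f$, checking $\mathbf{M}_{P_n,W}(\theta_W \circ F(f)) = \mathbf{M}_{W,W}(\theta_W) \circ f = f$ via naturality with $\alpha = f$. The only difference is cosmetic.
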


\begin{proof}
	Start with  a projective resolution 
	$$\cdots  \longrightarrow P_n \overset{d_n}{\longrightarrow} P_{n-1} \overset{d_{n-1}}{\longrightarrow} \cdots \overset{d_2}{\longrightarrow}  P_1 \overset{d_1}{\longrightarrow} P_0 \overset{d_0}{\longrightarrow} X \to 0$$
	of $X$ by objects in $\mathcal{C}$. Since the functor $F$ is exact and preserves projectives, we get a projective resolution
	$$\cdots  \longrightarrow F(P_n) \overset{F(d_n)}{\longrightarrow} F(P_{n-1}) \overset{F(d_{n-1})}{\longrightarrow} \cdots \overset{F(d_2)}{\longrightarrow}  F(P_1) \overset{F(d_1)}{\longrightarrow} F(P_0) \overset{F(d_0)}{\longrightarrow} F(X) \to 0$$
	of $F(X)$ in $\mathcal D$. For all $i\geq 0$, we have, by the naturality assumption, commutative diagrams
	$$\xymatrixcolsep{5pc}
	\xymatrix{ 	
	\Hom_{\mathcal{D}}(F(P_i),\Theta F(W)) \ar[d]^-{{\bf M}_{P_i,W}} \ar[r]^-{- \circ F(d_{i+1})} & \Hom_{\mathcal D}(F(P_{i+1}), \Theta F(W)) \ar[d]^-{{\bf M}_{P_{i+1}, W}} \\
	\Hom_{\mathcal{C}}(P_i,W)  \ar[r]^-{-\circ d_{i+1}} & \Hom_{\mathcal C}(P_{i+1}, W) 
}$$
that induce a morphism of complexes
$$\widetilde{{\bf M}} : \Hom_{\mathcal D}(F(P_*), \Theta F(W)) \to \Hom_{\mathcal C}(P_*,W)$$ and hence a morphism between the corresponding cohomologies:
$$H^*(\widetilde{{\bf M}}): \ext_{\mathcal{D}}^*(F(X),\Theta F(W)) \longrightarrow \ext_{\mathcal{C}}^*(X,W)$$
Assume now that $W$ is an object of $\mathcal F$, and let $f \in \Hom_{\mathcal C}(P_i,W)$. We have 
$${\bf  M}_{P_i,W}(\theta_W \circ F(f))= M_{W,W}(\theta_W) \circ f= f$$
and if moreover $f\circ d_{i+1}=0$, we have also $\theta_W \circ F(f) \circ F(d_{i+1})=0$. This shows that $H^*(\widetilde{{\bf M}})$ is surjective.
\end{proof}

\begin{remark}
Assume, as the setting of Proposition \ref{prop:pdtsf} allows us to, that in the proof of the previous lemma, we have started with	 a projective resolution 
$$\cdots  \longrightarrow P_n \overset{d_n}{\longrightarrow} P_{n-1} \overset{d_{n-1}}{\longrightarrow} \cdots \overset{d_2}{\longrightarrow}  P_1 \overset{d_1}{\longrightarrow} P_0 \overset{d_0}{\longrightarrow} X \to 0$$
of $X$ by objects in $\mathcal{F}$. Then, for $f \in \Hom_{\mathcal C}(P_i,W)$, we have 
$${\bf  M}_{P_i,W}(\Theta(F(f))\circ \theta_{P_i})= f \circ M_{P_i,P_i}(\theta_{P_i}) = f$$
This shows that the morphism of complexes $\widetilde{{\bf M}} : \Hom_{\mathcal D}(F(P_*), \Theta F(W)) \to \Hom_{\mathcal C}(P_*,W)$ in the above proof is surjective in general. However, since we see no reason that $\Theta F(f) \circ \theta_{P_i} \circ F(d_{i+1})=0$, we cannot conclude that the corresponding morphism in cohomology is surjective without our assumption on $W$.
	\end{remark}

\begin{proof}[Proof of Proposition \ref{prop:pdtsf}]
Let $V$ be an object of $\mathcal C$, and let 
$$\cdots  \longrightarrow P_n \overset{d_n}{\longrightarrow} P_{n-1} \overset{d_{n-1}}{\longrightarrow} \cdots \overset{d_2}{\longrightarrow}  P_1 \overset{d_1}{\longrightarrow} P_0 \overset{d_0}{\longrightarrow} V \to 0$$
be a projective resolution of $V$. Since the functor $F$ is exact and preserves projectives, we get a projective resolution
$$\cdots  \longrightarrow F(P_n) \overset{F(d_n)}{\longrightarrow} F(P_{n-1}) \overset{F(d_{n-1})}{\longrightarrow} \cdots \overset{F(d_2)}{\longrightarrow}  F(P_1) \overset{F(d_1)}{\longrightarrow} F(P_0) \overset{F(d_0)}{\longrightarrow} F(V) \to 0$$
of $F(V)$ in $\mathcal D$. This shows that $\pd_{\mathcal{D}}(F(V)) \leq \pd_{\mathcal{C}}(V)$. To prove the converse inequality, we can assume that $n= \pd_{\mathcal{D}}(F(V))$ is finite.  We then have  in particular $\ext^{n+1}_{\mathcal{D}}(F(V), \Theta F(P))=\{0\}$ for any object $P$ in $\mathcal{F}$, and by Lemma \ref{lem:embedextF}, we have $\ext^{n+1}_{\mathcal{C}}(V, P)=\{0\}$ as well. Hence, assuming that $\pd_{\mathcal{C}}(V)$ is finite, Lemma \ref{lem:pdfree} shows that $\pd_{\mathcal{C}}(V)\leq n$, concluding the proof.
\end{proof}

In this paper we will not develop any more theory on twisted separable functors, and will focus on applications of Proposition \ref{prop:pdtsf}.

\section{Question \ref{ques} in the cosemisimple case}\label{sec:coss}

Recall that a Hopf algebra is cosemisimple if and only if it admits a Haar integral, i.e. a linear map $h: A \to k$ such that for any $a\in A$, we have
$$h(a_{(1)})a_{(2)}= h(a)= h(a_{(2)})a_{(1)} \quad \text{and} \quad h(1)=1$$
The proof of the semisimplicity of the category of comodules from the existence of a Haar integral is a consequence of the following classical averaging construction, which shows that the the forgetful functor $\mathcal{M}^{A} \to {\rm Vec}_k$ is separable, and that we record for future use.

\begin{proposition}\label{prop:avecomod}
Let $V$, $W$ be right $A$-comodules over a cosemisimple Hopf algebra $A$, and let $f: V\to W$ be a linear map. The map
\begin{align*}
{\bf M}_{V,W}(f) : V &\longrightarrow W \\
v & \longmapsto  h\left(f(v_{(0)})_{(1)}S(v_{(1)}) \right)f(v_{(0)})_{(0)} 
\end{align*}
is a morphism of comodules, with $M(f)=f$ if and only if $f$ is a morphism of comodules and with, for any morphisms of comodules $\alpha : V' \to V$ and $\beta : W \to W'$, 
$\beta \circ {\bf M}_{V,W}(f) \circ \alpha = {\bf M}_{V',W'}(\beta \circ f \circ \alpha)$. The above construction therefore defines a projection $${\bf M}_{V,W} : \Hom(V,W) \to \Hom^{A}(V,W)$$ that we call the averaging with respect to $V$ and $W$.
\end{proposition}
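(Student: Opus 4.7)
My plan is to verify the three asserted properties of $\mathbf{M}_{V,W}$ in turn, then note the projection statement follows as a formal corollary.

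First, to show $\mathbf{M}_{V,W}(f)$ is an $A$-comodule morphism, I would compute $\rho_W(\mathbf{M}_{V,W}(f)(v))$ and $\mathbf{M}_{V,W}(f)(v_{(0)})\otimes v_{(1)}$ separately and compare them. Both sides simplify via coassociativity: on the first side, applying $\rho_W$ inside the definition and invoking $f(v_{(0)})_{(0)(0)}\otimes f(v_{(0)})_{(0)(1)}\otimes f(v_{(0)})_{(1)} = f(v_{(0)})_{(0)}\otimes f(v_{(0)})_{(1)(1)}\otimes f(v_{(0)})_{(1)(2)}$, one obtains $h(f(v_{(0)})_{(1)(2)}S(v_{(1)}))\,f(v_{(0)})_{(0)}\otimes f(v_{(0)})_{(1)(1)}$; on the second side, using $v_{(0)(0)}\otimes v_{(0)(1)}\otimes v_{(1)} = v_{(0)}\otimes v_{(1)(1)}\otimes v_{(1)(2)}$, one obtains $h(f(v_{(0)})_{(1)}S(v_{(1)(1)}))\,f(v_{(0)})_{(0)}\otimes v_{(1)(2)}$. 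Equating the two, with $f(v_{(0)})_{(0)}$ factoring as a common $W$-tensorand, reduces the claim to the purely Hopf-algebraic identity
\[
h(b_{(2)}S(c))\,b_{(1)} \;=\; h(bS(c_{(1)}))\,c_{(2)} \qquad \text{in } A,
\]
for all $b,c\in A$ (with $b = f(v_{(0)})_{(1)}$ and $c = v_{(1)}$). This identity is the main technical ingredient: one proves it by applying the right-invariance $h(a_{(1)})a_{(2)}=h(a)\cdot 1$ to $a=bS(c)$ and combining it with the fact that $h$ is two-sided on a cosemisimple Hopf algebra together with the antipode and counit axioms. I expect establishing this identity to be the only real obstacle in the proof.

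Second, for the equivalence $\mathbf{M}_{V,W}(f)=f \Longleftrightarrow f\in\Hom^A(V,W)$, the forward direction is immediate from the first step, since $\mathbf{M}_{V,W}(f)$ is always a comodule morphism. For the converse, assume $f$ is a comodule morphism, so $f(v_{(0)})_{(0)}\otimes f(v_{(0)})_{(1)} = f(v_{(0)(0)})\otimes v_{(0)(1)}$. Inserting this into the definition of $\mathbf{M}_{V,W}(f)(v)$ and applying coassociativity of $\rho_V$ to rewrite $v_{(0)(0)}\otimes v_{(0)(1)}\otimes v_{(1)}$ as $v_{(0)}\otimes v_{(1)(1)}\otimes v_{(1)(2)}$, the formula collapses to $h(v_{(1)(1)}S(v_{(1)(2)}))\,f(v_{(0)})$. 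The antipode axiom $v_{(1)(1)}S(v_{(1)(2)})=\eps(v_{(1)})\cdot 1$, together with $h(1)=1$ and the comodule counit axiom $\eps(v_{(1)})v_{(0)}=v$, then yields $\mathbf{M}_{V,W}(f)(v)=f(v)$.

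Third, the naturality statement is a direct substitution: given comodule maps $\alpha\colon V'\to V$ and $\beta\colon W\to W'$, the identity $\alpha(v')_{(0)}\otimes\alpha(v')_{(1)}=\alpha(v'_{(0)})\otimes v'_{(1)}$ allows one to rewrite the appearance of $\alpha(v')$ inside $\mathbf{M}_{V,W}(f)(\alpha(v'))$, and the comodule-map property of $\beta$ permits pushing $\beta$ past the coaction via $\beta(w_{(0)})\otimes w_{(1)}=\beta(w)_{(0)}\otimes\beta(w)_{(1)}$; together these identify the resulting expression with $\mathbf{M}_{V',W'}(\beta\circ f\circ\alpha)(v')$. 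Finally, the projection statement is a formal consequence of the first two steps: $\mathbf{M}_{V,W}$ takes values in $\Hom^A(V,W)$ by the first step and restricts to the identity there by the second, hence is a retraction of the inclusion $\Hom^A(V,W)\hookrightarrow\Hom(V,W)$.
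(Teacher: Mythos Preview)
Your proof is correct. The paper does not actually give a proof of this proposition: it is stated as a classical fact (the standard averaging construction showing that the forgetful functor $\mathcal{M}^A \to \mathrm{Vec}_k$ is separable) and recorded for later use. Your reduction of the colinearity check to the identity $h(b_{(2)}S(c))\,b_{(1)} = h(bS(c_{(1)}))\,c_{(2)}$ is the right move; it follows by applying the invariance of $h$ to $a = bS(c)$, using that $S$ is an anti-coalgebra map, and then multiplying by a further leg of $\Delta(c)$ and collapsing via the antipode axiom, exactly as you outline.
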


The Haar integral is not a trace in general, but satisfies a KMS type property, discovered by Woronowicz \cite{wo} in the setting of compact quantum groups.

\begin{theorem}\label{thm:modular}
Let $A$ be a cosemisimple Hopf algebra with Haar integral $h$. There exists a convolution invertible linear map $\psi : A \to k$, called a modular functional on $A$, satisfying the following conditions:
\begin{itemize}
	\item $S^2= \psi*{\rm id}*\psi^{-1}$; 
\item  $\sigma:=\psi*{\rm id}*\psi$ is an algebra automorphism of $A$;
\item   we have $h(ab)=h(b\sigma(a))$ for any $a, b \in A$.
\end{itemize}
\end{theorem}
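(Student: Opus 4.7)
The plan is to follow Woronowicz's original argument, translated into a purely algebraic setting, by working block-by-block in the Peter-Weyl decomposition of $A$.

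By cosemisimplicity, decompose $A = \bigoplus_{V \in \Irr(A)} C(V)$, where $C(V)$ is the finite-dimensional matrix subcoalgebra spanned by the coefficients of the irreducible comodule $V$. As a preliminary step, one establishes that the bilinear form $(a,b) \mapsto h(ab)$ on $A$ is non-degenerate. Via the block decomposition and the invariance properties $(h \otimes \id)\Delta = h(\cdot)\,1 = (\id \otimes h)\Delta$, this reduces to Peter-Weyl-type orthogonality for matrix coefficients, which in turn follows from Schur's lemma applied to intertwiners arising from $h$. From non-degeneracy one obtains a unique linear bijection $\sigma : A \to A$ satisfying $h(ab) = h(b\sigma(a))$ for all $a, b \in A$.

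Next, verify that $\sigma$ is an algebra automorphism. For any $a, b, c \in A$, the associativity relations $h(c\,\sigma(ab)) = h(abc) = h(bc\,\sigma(a)) = h(c\,\sigma(a)\sigma(b))$, combined with non-degeneracy, force $\sigma(ab) = \sigma(a)\sigma(b)$; the defining relation applied to $b=1$ yields $\sigma(1)=1$.

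The heart of the proof is the construction of $\psi$. Since cosemisimple Hopf algebras have bijective antipode, for each $V \in \Irr(A)$ the comodule $V^{**}$ is again irreducible, and Schur's lemma produces an intertwiner $F_V : V \to V^{**}$, unique up to a non-zero scalar. Working with the canonical rigid monoidal structure on $\mathcal{M}^A$, one can normalize these scalars so that the family $(F_V)_V$ is compatible with tensor products of irreducible comodules; this defines a character $\psi : A \to k$ whose restriction $\psi|_{C(V)}$ corresponds to $F_V \in \End(V) \cong C(V)^*$, with convolution inverse $\psi^{-1} = \psi \circ S$. The identities $S^2 = \psi * \id * \psi^{-1}$ and $\sigma = \psi * \id * \psi$ are then verified block-by-block on $C(V)$, where they reduce to the assertions that $S^2|_{C(V)}$ is conjugation by $F_V$ and that $\sigma|_{C(V)}$ is conjugation by $F_V^2$, both direct consequences of Peter-Weyl orthogonality with the appropriate coefficients.

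The main obstacle is the normalization of the intertwiners $F_V$: Schur's lemma only determines each $F_V$ up to a scalar, and multiplicativity of $\psi$, equivalent to compatibility of the $F_V$ with tensor products, is a nontrivial coherence constraint. This is the technical core of Woronowicz's result and, in the compact quantum group setting, is what canonically produces the one-parameter family of characters $(f_z)_{z \in \C}$; the purely algebraic analogue over an algebraically closed field yields the unique $\psi$ required here.
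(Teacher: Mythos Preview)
The paper does not actually prove this theorem: it simply cites Larson and Woronowicz for the orthogonality relations and refers to the treatments in Klimyk--Schm\"udgen and Podle\'s--M\"uller, remarking that inspection shows the arguments carry over from $\mathbb{C}$ to any algebraically closed field. Your sketch is precisely an elaboration of that same approach---Peter--Weyl decomposition, non-degeneracy of $h(ab)$, the Nakayama-type automorphism $\sigma$, and Woronowicz's $F$-matrices $F_V:V\to V^{**}$---so you are following exactly the route the paper points to.

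One caveat worth flagging: the normalization step you identify as the technical core genuinely is delicate over a general algebraically closed field. In the compact quantum group setting the $F_V$ are positive, which singles out a canonical choice and makes multiplicativity of $\psi$ automatic. Without positivity one typically normalizes by $\mathrm{tr}(F_V)=\mathrm{tr}(F_V^{-1})$ (a square root, available since $k$ is algebraically closed), but this still leaves a sign ambiguity per block, and the tensor-compatibility you invoke to pin these down is not entirely obvious. This is exactly where the paper's phrase ``inspecting the proof shows that it is valid'' hides some work. Note also that the theorem as stated only requires $\sigma=\psi*\mathrm{id}*\psi$ to be an algebra automorphism, not $\psi$ itself to be a character, so you are proving slightly more than is formally needed---though this stronger statement is indeed what the standard references establish.
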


The proof relies on the orthogonality relations, whose first occurence is due to Larson \cite{lar}, and were completed by Woronowicz \cite{wo}. In all the treatment we are aware of \cite{ks,pomu}, the setting is over the field of complex numbers, but inspecting the proof shows that it is valid for any cosemisimple Hopf algebra over any algebraically closed field.

We now present our key averaging lemma for bimodules. If $R$ is an $A$-comodule algebra over a cosemisimple Hopf algebra $A$, we denote by $\rho$ the algebra automorphism of $R$ defined by 
$\rho = {\rm id}*\psi^{-2}$, i.e. $\rho(x) = \psi^{-2}(x_{(1)})x_{(0)}$, with $\psi$ a modular functional as in Theorem \ref{thm:modular}.

\begin{lemma}\label{lem:averagebimod} Let $A$ be a cosemimple Hopf algebra and let $R$ be a right $A$-comodule algebra.
	Let $V,W$ be objects of  $_{R}^{}\mathcal M_R^{A}$. If $f : V \to W$ is a linear map satisfying 
	$$f(v\cdot x)= f(v)\cdot x \  \text{and} \ f(x\cdot v)= \rho(x)\cdot f(v)$$ for any $v\in V$ and $x \in R$, then ${\bf M}_{V,W}(f) : V \to W$ is a morphism in the category $_{R}^{}\mathcal M_R^{A}$.
\end{lemma}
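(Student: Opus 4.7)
Since the formula defining $\mathbf{M}_{V,W}(f)$ coincides with the one in Proposition \ref{prop:avecomod}, the $A$-colinearity of $\mathbf{M}_{V,W}(f)$ is automatic, and the content of the lemma reduces to $R$-bilinearity. The right $R$-linearity is routine: combining the hypothesis $f(v\cdot x)=f(v)\cdot x$ with the Hopf-bimodule compatibility on both $V$ and $W$ and using coassociativity of the $A$-coaction of $R$, the integrand acquires a factor $(x_{(0)})_{(1)}S(x_{(1)})$ that collapses by $a_{(1)}S(a_{(2)})=\varepsilon(a)1$, and the counit property $\varepsilon(x_{(1)})x_{(0)}=x$ for the coaction recombines back into $x$. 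This step uses neither $\rho$ nor the modular data.

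Left $R$-linearity is the substantive part, and is where the twist $\rho$ becomes essential. Unfolding $\mathbf{M}_{V,W}(f)(x\cdot v)$ with the Hopf-bimodule compatibility on $V$ and on $W$ and the twisted hypothesis $f(x_{(0)}\cdot v_{(0)})=\rho(x_{(0)})\cdot f(v_{(0)})$, and then computing the coaction of $\rho(y)=\psi^{-2}(y_{(1)})y_{(0)}$ as $\delta(\rho(y))=\psi^{-2}(y_{(2)})y_{(0)}\otimes y_{(1)}$ via coassociativity, the task reduces to establishing
$$\psi^{-2}(x_{(2)})\, h\bigl(x_{(1)}\,c\,S(x_{(3)})\bigr)\, x_{(0)} \;=\; h(c)\,x \qquad (x\in R,\ c\in A),$$
where the indices come from the three-fold iterated coaction of $R$ on $x$, and which is then used with $c=f(v_{(0)})_{(1)}S(v_{(1)})$ paired with the remaining $f(v_{(0)})_{(0)}$ on the right. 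I would prove this in four moves: (i) the KMS relation $h(ab)=h(b\sigma(a))$ of Theorem \ref{thm:modular}, with $a=x_{(1)}$, turns the integrand into $c\,S(x_{(3)})\,\sigma(x_{(1)})$; (ii) the convolution identity $\sigma*\psi^{-2}=\psi*\mathrm{id}*\psi*\psi^{-1}*\psi^{-1}=\psi*\mathrm{id}*\psi^{-1}=S^{2}$, read on consecutive Sweedler indices via coassociativity, converts the combination $\sigma(x_{(1)})\,\psi^{-2}(x_{(2)})$ at the three-fold iteration into $S^{2}(x_{(1)})$ at the two-fold iteration, with the third Sweedler index relabelling as $x_{(2)}$; the integrand becomes $c\,S(x_{(2)})\,S^{2}(x_{(1)})$; (iii) $S(x_{(2)})\,S^{2}(x_{(1)})=S\bigl(S(x_{(1)})\,x_{(2)}\bigr)=\varepsilon(x_{(1)})\cdot 1_{A}$ by antimultiplicativity of $S$ and the antipode axiom $S(x_{(1)})\,x_{(2)}=\varepsilon(x)\,1$, the surviving $\varepsilon$ pairing with the first-order Sweedler index of the coaction of $R$; (iv) the counit relation $\varepsilon(x_{(1)})\,x_{(0)}=x$ for the coaction of $R$ produces the required $x$. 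Altogether the integral collapses to $h(c)$ and the prefactor to $x$, yielding $h(c)\,x$.

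The main obstacle and key insight is the choice of twist: the formula $\rho=\mathrm{id}*\psi^{-2}$ is forced by the convolution identity $\sigma*\psi^{-2}=S^{2}$, which is precisely what allows the $\psi$-factors introduced by $\rho$ (acting on $R$) to cancel those coming from the modular automorphism $\sigma$ (of $A$) after moving $x_{(1)}$ past the Haar integral via KMS. The resulting $S^{2}$ then interlocks with the $S(x_{(3)})$ already present in the averaging formula so that the antipode axiom fires and the three iterated-coaction indices collapse to a counit. Any other twist would break this cancellation; this is why the untwisted averaging of Proposition \ref{prop:avecomod} does not yield a morphism in $_{R}^{}\mathcal{M}_{R}^{A}$, and why the notion of twisted separable functor of Section \ref{sec:tsf} is the natural level of abstraction for the situation.
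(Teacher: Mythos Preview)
Your proposal is correct and follows essentially the same route as the paper: colinearity from Proposition \ref{prop:avecomod}, right $R$-linearity by the routine antipode collapse, and left $R$-linearity via KMS followed by the convolution identity $\sigma*\psi^{-2}=\psi*\mathrm{id}*\psi^{-1}=S^{2}$ so that $S(x_{(2)})S^{2}(x_{(1)})$ collapses to $\varepsilon(x_{(1)})1$. The only cosmetic difference is that you first isolate the scalar identity $\psi^{-2}(x_{(2)})\,h\bigl(x_{(1)}\,c\,S(x_{(3)})\bigr)\,x_{(0)}=h(c)\,x$ and then specialize to $c=f(v_{(0)})_{(1)}S(v_{(1)})$, whereas the paper carries the full expression through; the underlying computation is identical.
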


\begin{proof}
	We already know that ${\bf M}_{V,W}(f) : V\to W$ is colinear and there remains to prove that ${\bf M}_{V,W}(f)$ is left and right $R$-linear as well. Let $v \in V$ and $x \in R$. We have, using our condition on $f$ and the compatibility between the comodule and right module structure:
	\begin{align*}
	{\bf M}_{V,W}(f)(v \cdot x) & = h\left(f((v\cdot x)_{(0)})_{(1)}S((v\cdot x)_{(1)})\right)f((v\cdot x)_{(0)})_{(0)}\\
	& = h\left(f(v_{(0)}\cdot x_{(0)})_{(1)}S(v_{(1)}x_{(1)})\right)f(v_{(0)} \cdot x_{(0)})_{(0)}\\
	& = h\left((f(v_{(0)})\cdot x_{(0)})_{(1)}S(v_{(1)}x_{(1)})\right)(f(v_{(0)}) \cdot x_{(0)})_{(0)}\\
	&=  h\left((f(v_{(0)})_{(1)} x_{(1)}S(v_{(1)}x_{(2)})\right)f(v_{(0)})_{(0)}\cdot x_{(0)}\\
	&=  h\left((f(v_{(0)})_{(1)} x_{(1)}S(x_{(2)})S(v_{(1)})\right)f(v_{(0)})_{(0)}\cdot x_{(0)}\\
	& ={\bf M}_{V,W}(f)(v)\cdot x
	\end{align*}
	Hence $f$ is right $R$-linear. We also  have, using our condition on $f$ and the compatibility between the comodule and left module structure:
	\begin{align*}
	{\bf M}_{V,W}(f)(x \cdot v) & = h\left(f((x\cdot v)_{(0)})_{(1)}S((x\cdot v)_{(1)})\right)f((x\cdot v)_{(0)})_{(0)}\\
	& =  h\left((f(x_{(0)} \cdot v_{(0)}))_{(1)}S(x_{(1)}v_{(1)})\right)f(x_{(0)}\cdot v_{(0)})_{(0)} \\
	& = \psi^{-2}(x_{(1)})h\left( (x_{(0)}\cdot f(v_{(0)}))_{(1)}S(x_{(2)}v_{(1)})\right) (x_{(0)}\cdot f(v_{(0)})_{(0)} \\
	& = \psi^{-2}(x_{(2)})h\left(x_{(1)}f(v_{(0)})_{(1)}S(x_{(3)}v_{(1)}))\right)x_{(0)}\cdot f(v_{(0)})_{(0)} \\
	\end{align*}
	Using the properties of the modular functional, this gives: 	
	\begin{align*}
	{\bf M}_{V,W}(f)(x \cdot v) &= \psi^{-2}(x_{(4)})h\left(f(v_{(0)})_{(1)}S(v_{(1)})S(x_{(5)})\psi(x_{(1)}) x_{(2)}\psi(x_{(3)})\right)x_{(0)} \cdot f(v_{(0)})_{(0)} \\
	&= h\left(f(v_{(0)})_{(1)}S(v_{(1)})S(x_{(4)})\psi(x_{(1)}) x_{(2)}\psi^{-1}(x_{(3)})\right)x_{(0)} \cdot f(v_{(0)})_{(0)} \\
	&= h\left(f(v_{(0)})_{(1)}v_{(1)}S(x_{(2)})S^2(x_{(1)})\right)x_{(0)} \cdot f(v_{(0)})_{(0)} \\
	&  = x \cdot {\bf M}_{V,W}(f)(v)
	\end{align*}
	and this shows that ${\bf M}_{V,W}(f)$ is left $R$-linear as well.			
\end{proof}


\begin{lemma}\label{lem:idrho} 
	Let $V$ be a comodule over a cosemisimple Hopf algebra $A$, let $R$ be an $A$-comodule algebra and consider the  map $\rho_V = \rho \otimes {\rm id}_{V }\otimes {\rm id}_R : R \otimes V\otimes R \to R \otimes V \otimes R$. We have ${\bf M}(\rho_V) = {\rm id}_{R \otimes V \otimes R}$, where ${\bf M}$ stands for averaging with respect to $ R \otimes V \otimes R$.	\end{lemma}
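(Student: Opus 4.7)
The plan is to invoke Lemma \ref{lem:averagebimod} in order to obtain $R$-bilinearity of ${\bf M}(\rho_V)$ essentially for free, thereby reducing the verification to elements of the form $1 \otimes v \otimes 1$, on which $\rho_V$ is already the identity.

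First I would check that $\rho_V$ satisfies the hypothesis of Lemma \ref{lem:averagebimod}. Since $\rho$ is an algebra automorphism of $R$ acting only on the leftmost tensor factor, direct inspection gives
\[
\rho_V(x \cdot w) = \rho(x) \cdot \rho_V(w) \quad \text{and} \quad \rho_V(w \cdot y) = \rho_V(w) \cdot y
\]
for every $x, y \in R$ and $w \in R \otimes V \otimes R$. Lemma \ref{lem:averagebimod} then asserts that ${\bf M}(\rho_V)$ is a morphism in ${_R^{}\mathcal M_R^{A}}$; in particular it is $R$-bilinear, so
\[
{\bf M}(\rho_V)(x \otimes v \otimes y) \;=\; x \cdot {\bf M}(\rho_V)(1 \otimes v \otimes 1) \cdot y,
\]
and it suffices to prove that ${\bf M}(\rho_V)(1 \otimes v \otimes 1) = 1 \otimes v \otimes 1$ for each $v \in V$.

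For this last step, observe that $\rho(1) = 1$, so if $\iota : V \to R \otimes V \otimes R$ denotes the comodule map $v \mapsto 1 \otimes v \otimes 1$ (which is colinear since $\delta(1) = 1 \otimes 1$), then $\rho_V \circ \iota = \iota$. Applying the naturality clause of Proposition \ref{prop:avecomod} with $\alpha = \iota$, $\beta = \id$ and $f = \rho_V$ yields ${\bf M}(\rho_V) \circ \iota = {\bf M}(\iota)$; and since $\iota$ is already a comodule morphism, the projection property stated in Proposition \ref{prop:avecomod} gives ${\bf M}(\iota) = \iota$. Combining this with the $R$-bilinearity established above concludes the proof.

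The only real subtlety is recognizing that the hypothesis of Lemma \ref{lem:averagebimod} is tailor-made for $\rho_V$: this observation turns what would otherwise be a lengthy Sweedler computation involving iterated coactions into a short structural argument, with no genuine obstacle to overcome.
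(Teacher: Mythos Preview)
Your proof is correct and follows essentially the same approach as the paper: both invoke Lemma~\ref{lem:averagebimod} to obtain $R$-bilinearity of ${\bf M}(\rho_V)$, then reduce to checking the value on $1\otimes v\otimes 1$. The only difference is that the paper simply declares ${\bf M}(\rho_V)(1\otimes v\otimes 1)=1\otimes v\otimes 1$ to be ``clear'' (it follows from a one-line direct computation using $\rho(1)=1$ and the counit identity), whereas you justify it via the naturality clause of Proposition~\ref{prop:avecomod} applied to the colinear embedding $\iota$; both are valid.
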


\begin{proof}
	It is immediate that 	 $\rho_V = \rho \otimes {\rm id}_{V }\otimes {\rm id}_R : R \otimes V\otimes R \to R \otimes V \otimes R$
	satisfies the assumption of Lemma \ref{lem:averagebimod}, hence ${\bf M}(\rho_V)$ is left and right $R$-linear. Since it is clear that ${\bf M}(\rho_V )(1\otimes v\otimes 1)= 1 \otimes v\otimes 1$ for any $v \in V$, we get the result by the $R$-bilinearity of ${\bf M}(\rho_V )$.
\end{proof}

We now have all the ingredients to prove the following result.

\begin{proposition}\label{prop:pdbimod}
	Let $A$ be  a cosemisimple Hopf algebra and let $R$ be a right $A$-comodule algebra. 
The forgetful functor $\Omega_{R} : {_{R}^{}\mathcal M_R^{A}} \to {_{R}^{}\mathcal M_R^{}}$ is twisted separable, and we have
	$\pd_{{_{R}^{}\mathcal M_R^{A}}}(V)=\pd_{{_{R}^{}\mathcal M_R^{}}}(V)$ for any object $V$ in ${_{R}^{}\mathcal M_R^{A}}$ such that $\pd_{ {_{R}^{}\mathcal M_R^{A}}}(V)$ is finite. In particular, if $\pd_{ {_{R}^{}\mathcal M_R^{A}}}(R)$ is finite, we have $\pd_{{_{R}^{}\mathcal M_R^{A}}}(R)=\pd_{{_{R}^{}\mathcal M_R^{}}}(R)=\cd(R)$.
\end{proposition}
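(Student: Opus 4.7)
I will exhibit $\Omega_R$ as a twisted separable functor in the sense of Definition \ref{def:tsf} and then invoke Proposition \ref{prop:pdtsf}. For the twisting autoequivalence, take $\Theta : {_{R}^{}\mathcal M_R^{}} \to {_{R}^{}\mathcal M_R^{}}$ to be the ``twist the left action by $\rho$'' functor: $\Theta(W)$ has the same underlying space and right $R$-action as $W$, but its left action is $x \cdot_{\mathrm{new}} w := \rho(x) w$. Since $\rho$ is an algebra automorphism of $R$, this is an autoequivalence. For the generating class, take $\mathcal F = \{R \otimes V \otimes R : V \in \mathcal M^{A}\}$, the class of free objects produced by the left adjoint of Proposition \ref{prop:adjbimod}; every object of ${_{R}^{}\mathcal M_R^{A}}$ is a quotient of a free one, and, since $A$ is cosemisimple, these free objects are projective in ${_{R}^{}\mathcal M_R^{A}}$ by the third item of the corollary to Proposition \ref{prop:adjbimod}. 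For $P = R \otimes V \otimes R$, take $\theta_P := \rho \otimes \id_V \otimes \id_R$, which is immediately checked to be a bimodule isomorphism $\Omega_R(P) \to \Theta \Omega_R(P)$.

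The natural transformation $\mathbf{M}$ is the averaging of Proposition \ref{prop:avecomod}. Indeed, unwinding the definition of $\Theta$, a bimodule map $f : \Omega_R(V) \to \Theta \Omega_R(W)$ is precisely a linear map satisfying $f(v \cdot x) = f(v) \cdot x$ and $f(x \cdot v) = \rho(x) \cdot f(v)$; Lemma \ref{lem:averagebimod} then guarantees that $\mathbf{M}_{V,W}(f) \in \Hom_{{_{R}^{}\mathcal M_R^{A}}}(V,W)$. The naturality axiom of Definition \ref{def:tsf} follows from the naturality of the averaging stated in Proposition \ref{prop:avecomod}, since morphisms in ${_{R}^{}\mathcal M_R^{A}}$ are in particular comodule maps and since $\Theta$ and $\Omega_R$ act as the identity on underlying linear maps. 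The normalization $\mathbf{M}_{P,P}(\theta_P) = \id_P$ for free $P$ is exactly the content of Lemma \ref{lem:idrho}. Hence $\Omega_R$ is twisted separable with the data above.

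To apply Proposition \ref{prop:pdtsf}, it remains to note that $\Omega_R$ is clearly exact and that it preserves projectives: a free object $R \otimes V \otimes R$ in ${_{R}^{}\mathcal M_R^{A}}$ becomes the free $R$-bimodule $(R \otimes R^{\mathrm{op}}) \otimes V$ after forgetting the $A$-coaction, so projectives (which are direct summands of such free objects) are sent to projectives. Taking $\mathcal F_0 := \mathcal F$ then fulfills hypothesis~(2) of Proposition \ref{prop:pdtsf}. Consequently $\pd_{{_{R}^{}\mathcal M_R^{A}}}(V) = \pd_{{_{R}^{}\mathcal M_R^{}}}(\Omega_R(V))$ whenever $\pd_{{_{R}^{}\mathcal M_R^{A}}}(V)$ is finite, and specializing to $V = R$ yields the last assertion, since $\pd_{{_{R}^{}\mathcal M_R^{}}}(R) = \cd(R)$ by definition. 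The real content of the argument already sits in Lemmas \ref{lem:averagebimod} and \ref{lem:idrho}, so I do not expect any genuine obstacle in this final step: the twisted separable formalism of Section \ref{sec:tsf} was designed precisely to convert these two inputs into a comparison of projective dimensions.
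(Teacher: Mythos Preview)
Your proof is correct and follows essentially the same route as the paper's own argument: the same autoequivalence $\Theta(W)={_\rho W}$, the same generating class $\mathcal F=\mathcal F_0$ of free objects $R\otimes V\otimes R$, the same isomorphisms $\theta_P=\rho\otimes\id_V\otimes\id_R$, and the same averaging $\mathbf M$ supplied by Lemma~\ref{lem:averagebimod}, with Lemma~\ref{lem:idrho} providing the normalization and Proposition~\ref{prop:pdtsf} yielding the conclusion. Your additional remarks on naturality and on preservation of projectives are accurate and match the paper's reasoning.
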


\begin{proof}  In order to show that the forgetful functor  $\Omega_{R} : {_{R}^{}\mathcal M_R^{A}} \to {_{R}^{}\mathcal M_R^{}}$ is twisted separable, consider
	\begin{enumerate}
		\item the class $\mathcal{F}=\mathcal F_0$ of free bimodules in  ${_{R}^{}\mathcal M_R^{A}}$;
		\item the autoequivalence $\Theta$ of the category $ {_{R}^{}\mathcal M_R^{}}$ that associates to an $R$-bimodule $W$ the $R$-bimodule $_\rho W$ having $W$ as underlying vector space and $R$-bimodule structure given by $x\cdot'w\cdot'y= \rho(x)\cdot w\cdot y$, and is trivial on morphisms;
		\item for a free object $R\otimes V\otimes R$, the $R$-bimodule isomorphism $\rho_V : R \otimes V \otimes R \to {_\rho (R \otimes V \otimes R)}$ in Lemma \ref{lem:idrho};
		\item for objects $V, W$ in $ {_{R}^{}\mathcal M_R^{A}}$, the averaging map $${\bf M}_{V,W} : \Hom_{ {_{R}^{}\mathcal M_R^{}}}(V, {_\rho W}) \to \Hom_{ {_{R}^{}\mathcal M_R^{A}}}(V,W)$$ from Lemma \ref{lem:averagebimod}.
	\end{enumerate}
	It follows from Lemma \ref{lem:averagebimod}, Lemma \ref{lem:idrho} and Proposition \ref{prop:avecomod} that  the functor $\Omega_{R} : {_{R}^{}\mathcal M_R^{A}} \to {_{R}^{}\mathcal M_R^{}}$ 	is indeed  twisted separable. Moreover, as already said, the class $\mathcal F$ of free objects consists of projective objects, the projective objects in ${_{R}^{}\mathcal M_R^{A}}$ are direct summands of free objects and hence are preserved by $\Omega_R$, which is clearly exact. Hence we are in the situation of Proposition \ref{prop:pdtsf}, and we obtain the equality of projective dimensions.
\end{proof}



We obtain the following partial answer to Question \ref{ques:galois}.

\begin{theorem}\label{thm:cdhgcs}
Let $A$ be Hopf algebra and let $R$ be a left or right $A$-Galois object. If $A$ is cosemisimple and $\cd(A)$ is finite, we have $\cd(A)=\cd(R)$ 
\end{theorem}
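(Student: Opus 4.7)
The plan is to combine the two main structural results already established, namely Proposition \ref{prop:cdequivgaloi} (which re-expresses $\cd(A)$ as a projective dimension in the equivariant bimodule category $_{R}\mathcal M_R^{B}$ for a suitable Hopf algebra $B$) with Proposition \ref{prop:pdbimod} (which, thanks to cosemisimplicity and the twisted averaging lemma, equates such an equivariant projective dimension with the ordinary bimodule projective dimension, i.e.\ with $\cd(R)$).

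More concretely, I would proceed as follows. First, treat the case that $R$ is a left $A$-Galois object. By Schauenburg's reconstruction result recalled in Section \ref{sec:hgmon}, there exists a Hopf algebra $B$ such that $R$ is an $A$-$B$-bi-Galois object, and the associated cotensor functor $-\square_A R$ provides a monoidal equivalence $\mathcal M^A \simeq^{\otimes} \mathcal M^B$. Since cosemisimplicity of a Hopf algebra is equivalent to semisimplicity of its comodule category, and this is a categorical (in fact already abelian) invariant, $B$ is cosemisimple as well. Applying Proposition \ref{prop:cdequivgaloi} gives
$$\cd(A) = \pd_{_{R}\mathcal M_R^{B}}(R) \geq \pd_{_{R}\mathcal M_R}(R) = \cd(R),$$
and in particular $\pd_{_{R}\mathcal M_R^{B}}(R)$ is finite by the assumption that $\cd(A)$ is finite. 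Now apply Proposition \ref{prop:pdbimod} to the cosemisimple Hopf algebra $B$ and the right $B$-comodule algebra $R$: finiteness of $\pd_{_{R}\mathcal M_R^{B}}(R)$ yields
$$\pd_{_{R}\mathcal M_R^{B}}(R) = \pd_{_{R}\mathcal M_R}(R) = \cd(R),$$
and combining the two displays gives $\cd(A) = \cd(R)$. Finally, for a right $A$-Galois object $R$, the algebra $R^{\mathrm{op}}$ is a left $A$-Galois object (cosemisimple Hopf algebras have bijective antipode, so this reduction is available), and $\cd(R^{\mathrm{op}}) = \cd(R)$ since Hochschild cohomology is invariant under passing to the opposite algebra; so the left-Galois case just treated concludes the argument.

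I do not anticipate a serious obstacle here: all the substantial work has been carried out in Sections \ref{sec:hgmon}--\ref{sec:tsf}, and the theorem is essentially the synthesis of those results. The one point deserving a brief justification is that the bi-Galois partner $B$ produced by Schauenburg's theorem is itself cosemisimple, which must be recorded in order to invoke Proposition \ref{prop:pdbimod} with $B$ in place of $A$; this however is immediate from the fact that the monoidal equivalence $\mathcal M^A \simeq^{\otimes} \mathcal M^B$ transports the semisimplicity of $\mathcal M^A$ to $\mathcal M^B$. The only genuinely nontrivial ingredient behind the scenes is the twisted averaging lemma (Lemma \ref{lem:averagebimod}) that powers Proposition \ref{prop:pdbimod}, but this has already been proved.
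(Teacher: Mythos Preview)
Your proposal is correct and follows exactly the approach of the paper, which simply states that the result is obtained by combining Proposition \ref{prop:cdequivgaloi} and Proposition \ref{prop:pdbimod}. You have correctly unpacked the details the paper leaves implicit, in particular the need to observe that the bi-Galois partner $B$ is cosemisimple (so that Proposition \ref{prop:pdbimod} applies to the $B$-comodule algebra $R$) and the reduction of the right-Galois case to the left-Galois case via $R^{\rm op}$.
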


\begin{proof} The result is obtained by combining Proposition \ref{prop:cdequivgaloi}  and Proposition \ref{prop:pdbimod}.
\end{proof}

We now obtain our partial answer to Question \ref{ques} in the cosemisimple case. The proof is similar to that of Theorem \ref{thm:mismooth}.

\begin{theorem}\label{thm:moninvcosemi}
	Let  $A$, $B$ be Hopf algebras  that have equivalent  linear tensor categories of comodules: 
	$\mathcal M^A \simeq^{\otimes} \mathcal M^B$. If $A$ and $B$ are cosemisimple and $\cd(A)$, $\cd(B)$ are finite,  we have $\cd(A)=\cd(B)$.
\end{theorem}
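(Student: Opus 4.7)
The plan is to mirror the proof of Theorem \ref{thm:mismooth}, using Theorem \ref{thm:cdhgcs} (the cosemisimple analogue of Theorem \ref{thm:cdeqgaloissmooth}) as the main ingredient in place of the smooth version. The key idea is to use the bi-Galois object produced by Schauenburg's reconstruction theorem as a bridge between $A$ and $B$.

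First, by \cite[Corollary 5.7]{sc1} (recalled at the start of Section \ref{sec:hgmon}), the hypothesis $\mathcal M^A \simeq^{\otimes} \mathcal M^B$ guarantees the existence of an $A$-$B$-bi-Galois object $R$. In particular, $R$ is simultaneously a left $A$-Galois object and a right $B$-Galois object.

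Second, since $A$ is cosemisimple with $\cd(A)$ finite, Theorem \ref{thm:cdhgcs} applied to the left $A$-Galois object $R$ yields $\cd(A)=\cd(R)$. Symmetrically, since $B$ is cosemisimple with $\cd(B)$ finite, the same theorem applied to the right $B$-Galois object $R$ yields $\cd(B)=\cd(R)$.

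Combining these two equalities gives $\cd(A)=\cd(R)=\cd(B)$, which is the desired conclusion. All the substantive work has been done in Proposition \ref{prop:cdequivgaloi} and Proposition \ref{prop:pdbimod} (which together produce Theorem \ref{thm:cdhgcs}), so no additional obstacle arises here: the present theorem is essentially a formal corollary. If any step deserves emphasis, it is the appeal to the finiteness hypothesis on both $\cd(A)$ and $\cd(B)$, which is exactly what is needed to invoke Theorem \ref{thm:cdhgcs} on each side of the bi-Galois object.
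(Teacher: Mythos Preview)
Your proof is correct and follows precisely the approach indicated in the paper: the paper states that ``the proof is similar to that of Theorem \ref{thm:mismooth}'', and your argument is exactly that analogue, passing through an $A$-$B$-bi-Galois object $R$ and applying Theorem \ref{thm:cdhgcs} on each side to obtain $\cd(A)=\cd(R)=\cd(B)$.
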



We finish the section by noticing that Proposition \ref{prop:pdbimod} can be strengthened in the case $S^4={\rm id}$.

\begin{proposition}\label{prop:bims4}		
Let $A$ be a cosemisimple Hopf algebra with $S^4={\rm id}$, and let $R$ be a right $A$-comodule algebra. 
\begin{enumerate}
\item The forgetful functor	 $\Omega_{R} : {_{R}^{}\mathcal M_R^{A}} \to {_{R}^{}\mathcal M_R^{}}$ is separable. We thus  have $\pd_{{_{R}^{}\mathcal M_R^{A}}}(V)=\pd_{{_{R}^{}\mathcal M_R^{}}}(V)$ for any object $V$ in ${_{R}^{}\mathcal M_R^{A}}$, and $\pd_{{_{R}^{}\mathcal M_R^{A}}}(R) =\cd(R)$. 

\item Let 	$F:\mathcal M^A \simeq^{\otimes} \mathcal M^B$ be a monoidal equivalence with $B$ satisfying $S^4={\rm id}$ as well. We then have, for the $B$-comodule algebra $T=F(R)$,  $\cd(R)=\cd(T)$.

\item Let $F:\mathcal M^{A} \longrightarrow {\rm Vec}_k$ be a fibre functor. If $\cd(R)$ is finite, we have, for the algebra $T=F(R)$, $\cd(R)=\cd(T)$.

\end{enumerate}
\end{proposition}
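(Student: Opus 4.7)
The strategy in (1) is to strengthen the twisted separability furnished by Proposition \ref{prop:pdbimod} to honest separability. Under $S^4=\id$, the Woronowicz modular functional $\psi$ of Theorem \ref{thm:modular} can be arranged so that $\psi^2=\varepsilon$: on each simple matrix subcoalgebra $C_\lambda\subset A$, $\psi$ is described by an invertible matrix $F_\lambda$ whose conjugation action realizes $S^2|_{C_\lambda}$, the hypothesis $S^4=\id$ forces $F_\lambda^2$ to be central and hence scalar, and a rescaling preserving the orthogonality normalization makes $F_\lambda^2=I$. With this choice, the automorphism $\rho=\id*\psi^{-2}$ of Lemma \ref{lem:averagebimod} is the identity, so the lemma turns any genuinely $R$-bilinear map $f:V\to W$ into a morphism of $ {_{R}^{}\mathcal M_R^{A}}$, while Proposition \ref{prop:avecomod} guarantees that it fixes any honest morphism of $ {_{R}^{}\mathcal M_R^{A}}$. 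This provides a natural retraction $\Hom_{ {_{R}^{}\mathcal M_R^{}}}(V,W)\to\Hom_{ {_{R}^{}\mathcal M_R^{A}}}(V,W)$, i.e.\ honest separability of $\Omega_R$ in the sense of \cite{nvdbvo}. Since $\Omega_R$ is exact and sends free objects in $ {_{R}^{}\mathcal M_R^{A}}$ to free $R$-bimodules, the equality of projective dimensions then follows without any finiteness assumption, by the two standard dual inequalities: one direction from pushing projective resolutions forward along $\Omega_R$, the other from the fact that separability realizes $\ext^*_{ {_{R}^{}\mathcal M_R^{A}}}(V,-)$ as a direct summand of $\ext^*_{ {_{R}^{}\mathcal M_R^{}}}(V,-)$.

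For (2), I would mimic the proof of Theorem \ref{thm:mismooth}. The monoidal equivalence $F:\mathcal M^A\simeq^\otimes\mathcal M^B$ upgrades, exactly as in the proof of Proposition \ref{prop:cdequivgaloi}, to a monoidal equivalence $ {_{R}^{}\mathcal M_R^{A}}\simeq {_{T}^{}\mathcal M_T^{B}}$ matching the algebra object $R$ to $T=F(R)$. In particular $\pd_{ {_{R}^{}\mathcal M_R^{A}}}(R)=\pd_{ {_{T}^{}\mathcal M_T^{B}}}(T)$, and applying (1) independently to the pairs $(A,R)$ and $(B,T)$ yields $\cd(R)=\pd_{ {_{R}^{}\mathcal M_R^{A}}}(R)=\pd_{ {_{T}^{}\mathcal M_T^{B}}}(T)=\cd(T)$.

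For (3), the idea is to reconstruct a comodule-algebra setup from the fibre functor and then invoke Proposition \ref{prop:pdbimod} (which needs neither $S^4=\id$ nor smoothness) on the reconstructed side. By Tannaka-Krein reconstruction \cite{js}, $F$ factors as $F=\omega\circ G$ with $G:\mathcal M^A\simeq^\otimes\mathcal M^H$ a monoidal equivalence and $\omega:\mathcal M^H\to\mathrm{Vec}_k$ the standard forgetful fibre functor; since cosemisimplicity of the comodule category is a Morita-Takeuchi invariant, $H$ is cosemisimple (we do not claim, and will not need, $S^4=\id$ for $H$). Write $T'=G(R)$ for the resulting right $H$-comodule algebra; its underlying algebra is exactly $T=F(R)$, so $\cd(T')=\cd(T)$. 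As in (2), the equivalence $G$ yields $\pd_{ {_{R}^{}\mathcal M_R^{A}}}(R)=\pd_{ {_{T'}^{}\mathcal M_{T'}^{H}}}(T')$, and (1) applied to $(A,R)$ identifies the left-hand side with $\cd(R)$, which is finite by assumption. Thus $\pd_{ {_{T'}^{}\mathcal M_{T'}^{H}}}(T')$ is finite as well, so Proposition \ref{prop:pdbimod} applied to $H$ and $T'$ gives $\pd_{ {_{T'}^{}\mathcal M_{T'}^{H}}}(T')=\cd(T')$, closing the chain of equalities $\cd(R)=\cd(T)$.

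The main obstacle lies in (1), namely the arrangement of $\psi^2=\varepsilon$: this requires tracking carefully how the freedom to rescale the modular matrices $F_\lambda$ interacts with the KMS identity $h(ab)=h(b\sigma(a))$ of Theorem \ref{thm:modular}, and verifying that the rescaled matrices still come from a well-defined modular functional on $A$. Once $\rho=\id$ has been secured, (2) and (3) follow essentially formally from (1) together with the tools already developed in Sections \ref{sec:tsf} and \ref{sec:coss}.
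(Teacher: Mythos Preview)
Your proposal is correct and tracks the paper's proof closely; parts (2) and (3) are essentially verbatim the paper's arguments (transport along the induced equivalence ${_R\mathcal M_R^A}\simeq{_T\mathcal M_T^B}$, and Tannaka reconstruction followed by Proposition~\ref{prop:pdbimod} on the reconstructed side under the finiteness hypothesis).

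The only point worth commenting on is in (1), precisely at the step you flag as the main obstacle. You propose to obtain $\psi^2=\varepsilon$ by a block-wise argument on the simple subcoalgebras together with a rescaling of the modular matrices $F_\lambda$, and you are right that checking the rescaled $\psi$ still satisfies all three clauses of Theorem~\ref{thm:modular} (in particular the KMS identity, which couples the blocks through the algebra structure) requires some care. The paper sidesteps this entirely: it extracts from the left-linearity computation in Lemma~\ref{lem:averagebimod} the identity
\[
h\bigl(S(a_{(1)})\,x\,a_{(2)}\bigr)\;=\;\psi^{-2}(a_{(2)})\,h\bigl(x\,a_{(3)}\,S^{-1}(a_{(1)})\bigr),
\]
specializes to $x=1$, and combines this with the observation that $S^4=\psi^{2}*\id*\psi^{-2}$, so that $S^4=\id$ forces $\psi^{-2}$ to convolution-commute with $\id$, to conclude $\psi^{-2}=\varepsilon$ directly for the \emph{given} modular functional --- no rescaling, no matrix bookkeeping. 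This dissolves exactly the obstacle you identified. Once $\rho=\id$ (hence $\Theta=\id$ and $\mathcal F$ becomes all objects), your deduction of genuine separability from Lemma~\ref{lem:averagebimod} and Proposition~\ref{prop:avecomod}, and of the unconditional equality of projective dimensions, matches the paper's.
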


\begin{proof}
As in the proof of Lemma \ref{lem:averagebimod}, using the properties of the modular functional,	we see that for any $a,x \in A$
	$$h(S(a_{(1)})xa_{(2)}) = \psi^{-2}(a_{(2)})h\left(xa_{(3)}S^{-1}(a_{(1)})\right)
	$$ 
At $x=1$ this gives 	$\varepsilon(a)= \psi^{-2}(a_{(2)})h\left(a_{(3)}S^{-1}(a_{(1)})\right)$. If $S^4={\rm id}$, then $\psi^{-2}$ convolution commutes with the identity, hence we get $\psi^{-2} =\varepsilon$. Hence the automorphism $\rho$ associated to an $A$-comodule algebra $R$ is the identity, the autoequivalence $\Theta$ in the proof of Proposition \ref{prop:pdbimod} is the identity, and the class $\mathcal F$ is the class of all objects, and it follows that $\Omega_{R} : {_{R}^{}\mathcal M_R^{A}} \to {_{R}^{}\mathcal M_R^{}}$ is separable. The result about projective dimensions is then either well-known or follows from the obvious improvement of Proposition \ref{prop:pdbimod} in the separable case, having in mind that the conclusion of Lemma \ref{lem:embedextF} now holds for any object.	

A monoidal equivalence $F:\mathcal M^A \simeq^{\otimes} \mathcal M^B$ induces, as before, an equivalence  between the bimodule categories  ${_{R}^{}\mathcal M_R^{A}}$ and ${_{T}^{}\mathcal M_T^{B}}$ for $T=F(R)$, sending $R$ to $T$, and then the assumption $S^4={\rm id}$ on $A$ and $B$ ensures that $\cd(R) = \pd_{{_{R}^{}\mathcal M_R^{A}}}(R) = \pd_{{_{T}^{}\mathcal M_T^{B}}}(T) =\cd(T)$.

Start now with a fibre functor $F:\mathcal M^{A} \longrightarrow {\rm Vec}_k$, i.e. a $k$-linear monoidal exact faithful functor that commutes with colimits. Such a functor induces, by Tannaka-Krein duality (see e.g. \cite{js,egno}) or by the results in \cite{sc1}, a monoidal equivalence 
$\mathcal M^A \simeq^{\otimes} \mathcal M^B$ for some Hopf algebra $B$, with as well a monoidal equivalence ${_{R}^{}\mathcal M_R^{A}} \simeq^{\otimes} {_{T}^{}\mathcal M_T^{B}}$. The assumption that $S^4={\rm id}$ for $A$ then gives  $\pd_{{_{R}^{}\mathcal M_R^{A}}}(R) =\cd(R)$.  Since $\pd_{ {_{R}^{}\mathcal M_R^{A}}}(R) =\pd_{{_{T}^{}\mathcal M_T^{B}}}(T)$, Proposition \ref{prop:pdbimod} ensures, under the assumption that $\cd(R)$ is finite, that $\pd_{{_{T}^{}\mathcal M_T^{B}}}(T) = \cd(T)$, and thus this gives the expected result.
\end{proof}

\begin{example}
Let $\sigma : A\otimes A \to k$ be  (Hopf, right) $2$-cocycle on a Hopf algebra $A$ (see \cite{mon}), i.e. $\sigma$ is a convolution invertible linear map $\sigma : A\otimes A \to k$ satisfying, for any $a,b,c \in A$
$$\sigma(a,1)= \varepsilon(a)1= \sigma(1,a), \ \sigma(a_{(2)},b_{(2)}) \sigma(a_{(1)}b_{(1)},c) = \sigma(a,b_{(1)}c_{(1)}) \sigma(b_{(2)},c_{(2)})$$
If   $R$ is a right $A$-comodule algebra, we obtain a new (associative) algebra $R_\sigma$ by letting
$$x.y = \sigma(x_{(1)},y_{(1)}) x_{(0)}y_{(0)}$$
We then have, if $A$ is cosemisimple with $S^4={\rm id}$, $\cd(R)=\cd(R_\sigma)$ if $\cd(R)$ is finite.
\end{example}

\begin{proof}
	The algebra $R_\sigma$ is the image of $R$ under the fibre functor $\mathcal{M}^{A} \to {\rm Vec}_k$ which has the forgetful functor as underlying functor and monoidal constraint $V\otimes W \to V \otimes W$, $v\otimes w \mapsto \sigma(v_{(1)}, w_{(1)})  v_{(0)} \otimes w_{(0)}$. The result is thus a consequence of Proposition \ref{prop:bims4}.
	\end{proof}

\begin{remark}
	If the Hopf algebra $A^\sigma$ (see \cite{doi,sc1}) satisfies as well $S^4={\rm id}$, we can conclude that $\cd(R)=\cd(R_\sigma)$ without the finiteness assumption. This applies in particular, when $A$ is a group algebra, to  the $2$-cocycle twisting of a group-graded algebra, which therefore has the same Hochschild cohomological dimension as the original algebra. This was probably well-known, but we are not aware of an explicit reference for this fact.
\end{remark}

\section{yetter-drinfeld modules over cosemimple hopf algebras}\label{sec:ydcd}

Our aim in this section is to compare the cohomological dimension and the Gerstenhaber-Schack cohomological dimension of a cosemisimple Hopf algebra, providing in this way a version of Theorem \ref{thm:moninvcosemi} that looks slightly weaker, but that is probably more useful in concrete situations (Corollary \ref{cor:invcdgs}). 

 Recall that a (right-right) Yetter-Drinfeld
module over a Hopf algebra $A$ is a right $A$-comodule and right $A$-module $V$
satisfying the condition, $\forall v \in V$, $\forall a \in A$, 
$$(v \cdot a)_{(0)} \otimes  (v \cdot a)_{(1)} =
v_{(0)} \cdot a_{(2)} \otimes S(a_{(1)}) v_{(1)} a_{(3)}$$
The category of Yetter-Drinfeld modules over $A$ is denoted $\yd_A^A$:
the morphisms are the $A$-linear and $A$-colinear maps.  The category $\yd_A^{A}$ is obviously abelian,
and, endowed with the usual tensor product of 
modules and comodules, is a tensor category, with unit the trivial Yetter-Drinfeld module, denoted $k$.

The forgetful functor $\Omega^{A} :\yd_A^{A} \to \mathcal M^{A}$ has a left adjoint \cite{camizh97}, the free  Yetter-Drinfeld module functor, which sends a comodule $V$ to the Yetter-Drinfeld module $V\boxtimes A$, which as a vector space is $V \otimes A$, has the right module structure  given by multiplication on the right, and  right coaction given by
$$(v \otimes a)_{(0)}\otimes  (v \otimes a)_{(1)}=v_{(0)}  \otimes a_{(2)} \otimes S(a_{(1)})v_{(1)}a_{(3)}$$
A Yetter-Drinfeld module isomorphic to some $V\boxtimes A$ as above is said to be free. Let us record the following facts, that are straightforward consequences of standard properties of pairs of adjoint functors.
\begin{enumerate}
	\item Every Yetter-Drinfeld module is a quotient of a free Yetter-Drinfeld module. Indeed, for a Yetter-Drinfeld $V$, the $A$-module structure of $V$ induces a surjective morphism $\Omega^{A}(V)\boxtimes A \to V$. 
	\item If the category $\mathcal M^{A}$ has enough projective objects, then so has  $\yd_A^{A}$.
	\item If $A$ is cosemisimple, then $\yd_A^{A}$ has enough projective objects, and the projective objects are precisely the direct summands of the free Yetter-Drinfeld modules.
\end{enumerate}
Similarly, the forgetful functor $\Omega_{A} :\yd_A^{A} \to \mathcal M_{A}$ has a right adjoint \cite{camizh97}, the cofree  Yetter-Drinfeld module functor, which sends a module $V$ to the Yetter-Drinfeld module $V\# A$, which as a vector space is $V \otimes A$, has the right comodule structure  given by the comultiplication of $A$ on the right, and  right $A$-module structure given by
$$(v \otimes a)\cdot b=v\cdot b_{(2)} \otimes S(b_{(1)})ab_{(3)}$$
Again, as a consequence of general properties of adjoint functors, it follows that the category $\yd_A^{A}$ has enough injective objects,  since $\mathcal M_{A}$ has.

Recall that we have defined the Gerstenhaber-Schack cohomological dimension of a Hopf algebra $A$ by 
$${\rm cd}_{\rm GS}(A)= {\rm max}\{n : \ \ext^n_{\yd_A^{A}}(k,V)\not=0 \ {\rm for} \ {\rm some} \ V \in \yd_A^A\}\in \mathbb N \cup \{\infty\}$$  
The name comes from the fact, proved in \cite{tai04}, that if $V$ is a Yetter-Drinfeld over $A$, then
$\ext^*_{\yd_A^{A}}(k,V)$ is isomorphic with $H^*_{\rm GS}(A,V)$, the Gerstenhaber-Schack cohomology of $A$ with coefficients in $V$ \cite{gs1,shst}. 

Notice that since $\yd_A^{A}$ has enough injective objects, the above $\ext$ can computed using injective resolutions of $V$, and if $\yd_A^{A}$ has enough projective objects, using projective resolutions of $k$ in $\yd_A^{A}$. Another consequence of general properties of pairs of adjoint functors is that we have, for any Yetter-Drinfeld module $V$ and any $A$-module W, natural isomorphisms
$$\ext^*_A(\Omega_A(V),W) \simeq \ext^*_{\yd_A^{A}}(V, W\#A)$$
This is what proves that $\cd(A)\leq \cd_{\rm GS}(A)$ \cite{bi16}. 

We now present an  averaging lemma for Yetter-Drinfeld modules over cosemisimple Hopf algebras, in the same spirit as Lemma \ref{lem:averagebimod}, which will be the key tool towards the proof of Theorem \ref{thm:cd=cdgs}. If $A$ is a cosemisimple Hopf algebra with modular functional $\psi$, we denote by $\theta$ the algebra automorphism of $A$ defined by $\theta = \psi^2*{\rm id}$.

\begin{lemma}\label{lem:keyaverage}
	Let $V,W$ be Yetter-Drinfeld modules over a cosemisimple Hopf algebra $A$. If $f : V \to W$ is a linear map satisfying $f(v\cdot a)= f(v)\cdot \theta(a) $ for any $v\in V$ and $a \in A$, then ${\bf M}_{V,W}(f) : V \to W$ is a morphism of Yetter-Drinfeld modules.
\end{lemma}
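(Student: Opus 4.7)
My plan is to follow the strategy used in the proof of Lemma \ref{lem:averagebimod}. Proposition \ref{prop:avecomod} already guarantees that ${\bf M}_{V,W}(f)$ is a morphism of $A$-comodules, so it remains only to verify the $A$-linearity identity ${\bf M}_{V,W}(f)(v\cdot a) = {\bf M}_{V,W}(f)(v)\cdot a$ for all $v\in V$ and $a\in A$.

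I would first expand ${\bf M}_{V,W}(f)(v\cdot a)$ by applying the averaging formula together with the Yetter-Drinfeld formula $(v\cdot a)_{(0)}\otimes(v\cdot a)_{(1)} = v_{(0)}\cdot a_{(2)}\otimes S(a_{(1)})v_{(1)}a_{(3)}$ on $V$. Then, using the twisted equivariance hypothesis $f(v_{(0)}\cdot a_{(2)}) = f(v_{(0)})\cdot\theta(a_{(2)})$ and the Yetter-Drinfeld compatibility on $W$ to unfold the coaction of $f(v_{(0)})\cdot\theta(a_{(2)})$, the expression becomes $h$ applied to a product of Sweedler components of $a$, antipodes, a factor of $\psi^{2}$ coming from $\theta$, and the intrinsic factor $f(v_{(0)})_{(1)}S(v_{(1)})$ already present in ${\bf M}_{V,W}(f)(v)$.

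The crux, and the main obstacle, is to rearrange this product using Theorem \ref{thm:modular} so that the $a$-terms can be pulled out of $h$. Concretely, the definition $\theta = \psi^{2}\ast{\rm id}$ is engineered so that after applying the KMS relation $h(xy)=h(y\sigma(x))$ with $\sigma = \psi\ast{\rm id}\ast\psi$, together with $S^{2} = \psi\ast{\rm id}\ast\psi^{-1}$, the factors $\psi^{\pm 1}$ introduced by these identities combine with the $\psi^{2}$ to recover a single clean copy of $a$, while the antipode factors produced by the Yetter-Drinfeld formula on $V$ are converted and collapse by counitality with the $a$-factors produced by expanding $\theta(a_{(2)})$ and by the Yetter-Drinfeld formula on $W$. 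This should leave
$$
{\bf M}_{V,W}(f)(v\cdot a) \;=\; h\bigl(f(v_{(0)})_{(1)}S(v_{(1)})\bigr)\, f(v_{(0)})_{(0)}\cdot a \;=\; {\bf M}_{V,W}(f)(v)\cdot a,
$$
as required. The Sweedler-index bookkeeping is somewhat heavier than in Lemma \ref{lem:averagebimod}, because in the Yetter-Drinfeld setting the action and coaction interact nontrivially on both $V$ and $W$, but no ingredient beyond the modular identities of Theorem \ref{thm:modular} is needed; in particular, as with Lemma \ref{lem:averagebimod}, the assumption $S^{4}={\rm id}$ would trivialize the twist (making $\theta = {\rm id}$), recovering an untwisted averaging statement.
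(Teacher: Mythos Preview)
Your proposal is correct and follows essentially the same route as the paper: reduce to $A$-linearity via Proposition \ref{prop:avecomod}, expand with the Yetter--Drinfeld compatibility on $V$ and then on $W$, insert the twist $\theta=\psi^{2}*{\rm id}$, and then use the KMS identity $h(xy)=h(y\sigma(x))$ together with $S^{2}=\psi*{\rm id}*\psi^{-1}$ to cancel all $a$-factors inside $h$ by counitality. The only small point you leave implicit is the auxiliary identity $\sigma\circ S=\psi^{-1}*S*\psi^{-1}$ (equivalently $\psi\circ S=\psi^{-1}$), which the paper invokes explicitly to turn $\sigma(S(a_{(3)}))$ into a usable form; this is a direct consequence of the modular data in Theorem \ref{thm:modular}, so your claim that nothing beyond those identities is needed stands.
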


\begin{proof} We already know that ${\bf M}_{V,W}(f) : V\to W$ is colinear and there remains to prove that ${\bf M}_{V,W}(f)$ is $A$-linear as well. Let $v \in V$ and $a \in A$. We have, using our condition on $f$ and the Yetter-Drinfeld property:
	\begin{align*}
	{\bf M}_{V,W}(f)(v \cdot a) & = h\left(f((v\cdot a)_{(0)})_{(1)}S((v\cdot a)_{(1)})\right)f((v\cdot a)_{(0)})_{(0)}\\
	& = h\left(f(v_{(0)}\cdot a_{(2)})_{(1)}S(S(a_{(1)})v_{(1)}a_{(3)})\right)f(v_{(0)} \cdot a_{(2)})_{(0)}\\
	& = h\left((f(v_{(0)})\cdot \theta(a_{(2)}))_{(1)}S(S(a_{(1)})v_{(1)}a_{(3)})\right)(f(v_{(0)}) \cdot \theta(a_{(2)}))_{(0)}\\
	&=  \psi^{2}(a_{(2)})h\left((f(v_{(0)})\cdot a_{(3)})_{(1)}S(S(a_{(1)})v_{(1)}a_{(4)})\right)(f(v_{(0)}) \cdot a_{(3)})_{(0)}\\
	& = \psi^{2}(a_{(2)})h\left(S(a_{(3)})f(v_{(0)})_{(1)}a_{(5)}S(S(a_{(1)})v_{(1)}a_{(6)})\right)f(v_{(0)})_{(0)} \cdot a_{(4)}\\
	& = \psi^{2}(a_{(2)})h\left(S(a_{(3)})f(v_{(0)})_{(1)}a_{(5)}S(a_{(6)})S(v_{(1)})S^2(a_{(1)})\right)f(v_{(0)})_{(0)} \cdot a_{(4)}\\
	& = \psi^{2}(a_{(2)})h\left(S(a_{(3)})f(v_{(0)})_{(1)}S(v_{(1)})S^2(a_{(1)})\right)f(v_{(0)})_{(0)} \cdot a_{(4)} 
	\end{align*}
Using the properties of the modular functional, and since $\sigma \circ S = \sigma^{-1} = \psi^{-1} *S * \psi^{-1}$ because $\sigma$ is an algebra map, this gives: 	
	\begin{align*}
	{\bf M}_{V,W}(f)(v \cdot a)	& = \psi^{2}(a_{(2)})h\left(f(v_{(0)})_{(1)}S(v_{(1)})S^2(a_{(1)})
	\sigma(S(a_{(3)})
	\right)f(v_{(0)})_{(0)} \cdot a_{(4)} \\
	& = \psi^{2}(a_{(2)})h\left(f(v_{(0)})_{(1)}S(v_{(1)})S^2(a_{(1)})
		\psi^{-1}(a_{(3)})S(a_{(4)})\psi^{-1}(a_{5})
		\right)f(v_{(0)})_{(0)} \cdot a_{(6)} \\
		& = h\left(f(v_{(0)})_{(1)}S(v_{(1)})S^2(a_{(1)})
		\psi(a_{(2)})S(a_{(3)})\psi^{-1}(a_{4}))
		\right)f(v_{(0)})_{(0)} \cdot a_{(5)} \\
		& =  h\left(f(v_{(0)})_{(1)}S(v_{(1)})S^2(a_{(1)}) S^3(a_{2})	
		\right)f(v_{(0)})_{(0)} \cdot a_{(3)} \\
		& =  h\left(f(v_{(0)})_{(1)}S(v_{(1)})
		\right)f(v_{(0)})_{(0)} \cdot a  = {\bf M}_{V,W}(f)(v)\cdot a
	\end{align*}
and this shows that ${\bf M}_{V,W}(f)$ is $A$-linear.	
\end{proof}



\begin{lemma}\label{lem:idtheta}
	Let $V$ be a right comodule over the cosemisimple Hopf algebra $A$, and consider the linear map $\theta_V = {\rm id}_V \otimes \theta : V\boxtimes A \to V \boxtimes A$. We have ${\bf M}(\theta_V) = {\rm id}_{V \boxtimes A}$, where ${\bf M}(\theta_V)$ stands for ${\bf M}_{V\boxtimes A,V\boxtimes A}(\theta_V)$.
\end{lemma}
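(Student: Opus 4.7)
The plan is to mirror the proof of Lemma \ref{lem:idrho}: use Lemma \ref{lem:keyaverage} to show that ${\bf M}(\theta_V)$ is a Yetter-Drinfeld morphism, and then reduce the claim to the single computation ${\bf M}(\theta_V)(v \otimes 1) = v \otimes 1$ via $A$-linearity. The main (mild) subtlety will be handling the collapse of the Yetter-Drinfeld coaction formula at $a=1$; beyond that the computation is routine.

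First I would verify that $\theta_V$ satisfies the hypothesis of Lemma \ref{lem:keyaverage}. The right $A$-action on $V \boxtimes A$ is multiplication on the $A$-tensor factor, so for $v \otimes a \in V \boxtimes A$ and $b \in A$,
$$\theta_V\bigl((v \otimes a) \cdot b\bigr) = v \otimes \theta(ab) = v \otimes \theta(a)\theta(b) = \theta_V(v \otimes a) \cdot \theta(b),$$
using that $\theta$ is an algebra automorphism of $A$. Hence Lemma \ref{lem:keyaverage} ensures that ${\bf M}(\theta_V)$ is a morphism of Yetter-Drinfeld modules, and in particular it is right $A$-linear.

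Since every element of $V \boxtimes A$ is of the form $(v \otimes 1) \cdot a$, the $A$-linearity of ${\bf M}(\theta_V)$ reduces the claim to showing that ${\bf M}(\theta_V)(v \otimes 1) = v \otimes 1$ for every $v \in V$. The Yetter-Drinfeld coaction on $V \boxtimes A$ at $a=1$ collapses to $(v \otimes 1)_{(0)} \otimes (v \otimes 1)_{(1)} = v_{(0)} \otimes 1 \otimes v_{(1)}$ (since $S(a_{(1)}) v_{(1)} a_{(3)}$ reduces to $v_{(1)}$ at $a=1$), and $\theta_V(v_{(0)} \otimes 1) = v_{(0)} \otimes 1$ because $\theta(1) = 1$. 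Computing the coaction once more and using coassociativity with the standard notation $v_{(0)} \otimes v_{(1)} \otimes v_{(2)}$, the defining formula of the averaging yields
$${\bf M}(\theta_V)(v \otimes 1) = h\bigl(v_{(1)} S(v_{(2)})\bigr)\, v_{(0)} \otimes 1 = \varepsilon(v_{(1)})\, v_{(0)} \otimes 1 = v \otimes 1,$$
by the antipode axiom, $h(1) = 1$, and the counit axiom, concluding the proof.
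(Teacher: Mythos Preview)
Your proof is correct and follows exactly the same approach as the paper's: invoke Lemma \ref{lem:keyaverage} to obtain $A$-linearity of ${\bf M}(\theta_V)$, then reduce to the computation at $v\otimes 1$. The paper merely asserts that ${\bf M}(\theta_V)(v\otimes 1)=v\otimes 1$ is ``clear'', whereas you spell out the collapse of the coaction and the antipode/counit identities explicitly; otherwise the arguments are identical.
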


\begin{proof}
It is immediate that 	${\rm id}_V \otimes \theta : V\boxtimes A \to V \boxtimes A$ satisfies the assumption of Lemma \ref{lem:keyaverage}, hence ${\bf M}({\rm id}_V \otimes \theta)$ is $A$-linear. Since it is clear that ${\bf M}({\rm id}_V \otimes \theta) (v\otimes 1)= v\otimes 1$ for any $v \in V$, we get the result by the $A$-linearity of ${\bf M}({\rm id}_V \otimes \theta)$.
	\end{proof}

We now have all the ingredients to prove 
the following result. 

\begin{proposition}\label{prop:pdyd}
Let $A$ be  a cosemisimple Hopf algebra. The forgetful functor $\Omega_{A} :\yd_A^{A} \to \mathcal M_{A}$ is twisted separable, and 
we have $\pd_{\yd_A^{A}}(V)=\pd_A(V)$ for any Yetter-Drinfeld module $V$ such that $\pd_{\yd_A^{A}}(V)$ is finite.
\end{proposition}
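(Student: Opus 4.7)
The plan is to apply Proposition \ref{prop:pdtsf} to the forgetful functor $\Omega_A : \yd_A^A \to \mathcal{M}_A$, mirroring the structure of the proof of Proposition \ref{prop:pdbimod}. The data needed for twisted separability consists of: (i) the autoequivalence $\Theta$ of $\mathcal{M}_A$ sending a right $A$-module $W$ to $W_\theta$, where $W_\theta = W$ as a vector space but with twisted action $w\cdot' a = w\cdot \theta(a)$, and which is the identity on morphisms; (ii) the generating class $\mathcal{F}=\mathcal{F}_0$ of free Yetter-Drinfeld modules $V\boxtimes A$; (iii) for each free object $V\boxtimes A$, the isomorphism $\theta_V = \mathrm{id}_V\otimes \theta : V\boxtimes A \to (V\boxtimes A)_\theta$ of Lemma \ref{lem:idtheta} (which is an $A$-module isomorphism into $\Theta \Omega_A(V\boxtimes A)$ exactly because $\theta$ is an algebra automorphism); and (iv) for any two Yetter-Drinfeld modules $V,W$, the averaging map
$$\mathbf{M}_{V,W} : \Hom_{\mathcal{M}_A}(V, W_\theta) \longrightarrow \Hom_{\yd_A^A}(V,W)$$
whose well-definedness is precisely the content of Lemma \ref{lem:keyaverage}, since an $A$-linear map $V\to W_\theta$ is exactly a linear map satisfying $f(v\cdot a)=f(v)\cdot\theta(a)$.

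Next I would check the hypotheses of Definition \ref{def:tsf}. The identity $\mathbf{M}_{P,P}(\theta_P)=\mathrm{id}_P$ for $P=V\boxtimes A$ is Lemma \ref{lem:idtheta}. Naturality of $\mathbf{M}_{-,-}$ with respect to morphisms in $\yd_A^A$ follows directly from the naturality of the comodule-averaging construction stated in Proposition \ref{prop:avecomod}, since both the source and target of a Yetter-Drinfeld morphism remain unchanged as comodules and the averaging formula only involves the underlying $A$-coaction and the antipode. Thus $\Omega_A$ is twisted separable.

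To invoke Proposition \ref{prop:pdtsf}, I still need: exactness of $\Omega_A$ (immediate, as it simply forgets the coaction); preservation of projectives by $\Omega_A$ (the underlying right $A$-module of a free Yetter-Drinfeld module $V\boxtimes A$ is the free $A$-module $V\otimes A$, and cosemisimplicity of $A$ implies every projective object of $\yd_A^A$ is a summand of a free one, hence maps to a projective right $A$-module); and the fact that $\mathcal{F}_0$ is generating by projectives in $\yd_A^A$, noted earlier in the section. With all hypotheses verified, Proposition \ref{prop:pdtsf} yields $\pd_{\yd_A^A}(V)=\pd_A(\Omega_A(V))=\pd_A(V)$ whenever the left-hand side is finite. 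The main conceptual obstacle is already packaged into Lemma \ref{lem:keyaverage}; beyond that, the proof is a bookkeeping exercise of fitting the YD setting into the twisted-separable framework.
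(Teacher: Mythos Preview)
Your proposal is correct and follows essentially the same route as the paper: the same autoequivalence $\Theta(W)=W_\theta$, the same generating class $\mathcal{F}=\mathcal{F}_0$ of free Yetter--Drinfeld modules with isomorphisms $\theta_V$, and the same averaging map from Lemma~\ref{lem:keyaverage}, assembled via Proposition~\ref{prop:avecomod} and Lemma~\ref{lem:idtheta} and then fed into Proposition~\ref{prop:pdtsf}. The verification of exactness, preservation of projectives, and that $\mathcal{F}_0$ consists of projectives is also identical to what the paper does.
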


\begin{proof}  In order to show that the forgetful functor $\Omega_{A} :\yd_A^{A} \to \mathcal M_{A}$ is twisted separable, consider
	\begin{enumerate}
		\item the class $\mathcal{F}= \mathcal F_0$ of free Yetter-Drinfeld modules;
		\item the autoequivalence $\Theta$ of the category $\mathcal M_A$ that associates to a right $A$-module $W$ the $A$-module $W_\theta$ having $W$ as underlying vector space and $A$-module structure given by $w\cdot'a= w\cdot \theta(a)$, and is trivial on morphisms;
		\item for a free Yetter-Drinfeld module $V\boxtimes A$, the $A$-module isomorphism $\theta_V : V \otimes A \to (V \otimes A)_\theta$ in Lemma \ref{lem:idtheta}.
		\item for Yetter-Drinfeld modules $V, W$, the averaging map $${\bf M}_{V,W} : \Hom_A(V, W_\theta) \to \Hom_{\yd_A^{A}}(V,W)$$ from Lemma \ref{lem:keyaverage}.
	\end{enumerate}
It follows from Lemma \ref{lem:keyaverage}, Lemma \ref{lem:idtheta} and Proposition \ref{prop:avecomod} that  the functor	$\Omega_{A} :\yd_A^{A} \to \mathcal M_{A}$ is indeed  twisted separable. Moreover, as already said, the class $\mathcal F$ of free Yetter-Drinfeld modules consists of projective objects, the projective objects in $\yd_A^{A}$ are direct summands of free objects and hence are preserved by $\Omega_A$, which is exact. Hence we are in the situation of Proposition \ref{prop:pdtsf}, and we obtain the equality of projective dimensions.
	\end{proof}
	
We thus obtain the main result in the section.

\begin{theorem}\label{thm:cd=cdgs}
Let $A$ be a cosemisimple Hopf algebra. If $\cd_{\rm GS}(A)$ is finite, 	we have $\cd(A)=\cd_{\rm GS}(A)$.
\end{theorem}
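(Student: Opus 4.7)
The inequality $\cd(A)\leq \cd_{\rm GS}(A)$ is already known (recalled earlier in the section), so the task is to prove the reverse inequality under the finiteness hypothesis on $\cd_{\rm GS}(A)$. The plan is to interpret both cohomological dimensions as projective dimensions of the trivial object in their respective abelian categories, and then invoke the main twisted-separability result of the previous proposition to identify them.

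\smallskip

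First, since $A$ is cosemisimple, the category $\yd_A^A$ has enough projective objects (the direct summands of free Yetter-Drinfeld modules), so by the very definition of $\cd_{\rm GS}(A)$ we have
$$\cd_{\rm GS}(A)=\pd_{\yd_A^A}(k),$$
where $k$ denotes the trivial Yetter-Drinfeld module. By the standing assumption, this projective dimension is finite, so the hypotheses of Proposition \ref{prop:pdyd} are satisfied for $V=k$. Applying that proposition immediately yields
$$\pd_{\yd_A^A}(k)=\pd_A(\Omega_A(k)).$$

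\smallskip

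Second, one observes that the forgetful functor sends the trivial Yetter-Drinfeld module $k$ to the trivial right $A$-module $k_\varepsilon$, so $\pd_A(\Omega_A(k))=\pd_A(k_\varepsilon)$. Combining this with the standard equality $\pd_A(k_\varepsilon)=\cd(A)$ for Hopf algebras (recalled in the introduction) chains everything together:
$$\cd_{\rm GS}(A)=\pd_{\yd_A^A}(k)=\pd_A(k_\varepsilon)=\cd(A).$$

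\smallskip

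There is no essential obstacle left at this stage: all of the real work has been absorbed into the twisted averaging Lemma \ref{lem:keyaverage}, Lemma \ref{lem:idtheta}, and the resulting twisted-separability statement Proposition \ref{prop:pdyd}. The role of the finiteness hypothesis is precisely to allow the conclusion of Proposition \ref{prop:pdtsf} to be applied (as in Lemma \ref{lem:pdfree}, the equality of projective dimensions in the twisted-separable setting requires one of them to be finite to start with). Accordingly, the proof to write up consists simply of these three identifications presented in one line.
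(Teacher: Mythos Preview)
Your proposal is correct and follows exactly the paper's approach: both identify $\cd_{\rm GS}(A)=\pd_{\yd_A^A}(k)$ and $\cd(A)=\pd_A(k_\varepsilon)$, then invoke Proposition~\ref{prop:pdyd} under the finiteness hypothesis. The paper's proof is simply the one-line version of what you wrote.
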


\begin{proof} 
Let $A$ be a cosemimple Hopf algebra. Since $\cd_{\rm GS}(A) = \pd_{\yd_A^{A}}(k)$ and $\cd(A) = \pd_A(k_\varepsilon)$, we have $\cd(A)=\cd_{\rm GS}(A)$ if $\cd_{\rm GS}(A)$ is finite, by Proposition \ref{prop:pdyd}.
\end{proof}

We get the following weak form of Theorem \ref{thm:moninvcosemi}, whose formulation is useful.

\begin{corollary}\label{cor:invcdgs}
		Let  $A$, $B$ be Hopf algebras such that
	$\mathcal M^A \simeq^{\otimes} \mathcal M^B$. If $A$ and $B$ are cosemisimple and $\cd_{\rm GS}(A)$ is finite, we have $\cd(A)=\cd(B)$.
\end{corollary}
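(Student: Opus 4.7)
The plan is to observe that this corollary falls out essentially for free once we combine the earlier monoidal invariance of the Gerstenhaber--Schack cohomological dimension (from \cite{bi16}) with Theorem \ref{thm:cd=cdgs}, or alternatively with Theorem \ref{thm:moninvcosemi}. The only real work is to ensure the finiteness hypotheses of the invoked results are satisfied on both sides.

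First I would use the result from \cite[Theorem 5.6, Corollary 5.7]{bi16}, recalled in the introduction, which states that under a monoidal equivalence $\mathcal M^A \simeq^\otimes \mathcal M^B$ one has
\[
\max(\cd(A),\cd(B)) \leq \cd_{\rm GS}(A) = \cd_{\rm GS}(B).
\]
Since we assume $\cd_{\rm GS}(A)$ is finite, this immediately gives that $\cd_{\rm GS}(B)$ is finite as well, and also that $\cd(A)$ and $\cd(B)$ are both finite.

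Next I would apply Theorem \ref{thm:cd=cdgs} to each Hopf algebra separately: since $A$ is cosemisimple with $\cd_{\rm GS}(A)$ finite, we have $\cd(A) = \cd_{\rm GS}(A)$; and since $B$ is cosemisimple with $\cd_{\rm GS}(B)$ also finite, we have $\cd(B) = \cd_{\rm GS}(B)$. Combined with the equality $\cd_{\rm GS}(A) = \cd_{\rm GS}(B)$ from the previous paragraph, this yields $\cd(A) = \cd(B)$ as desired. (Equivalently, one could skip the second application of Theorem \ref{thm:cd=cdgs} and instead invoke Theorem \ref{thm:moninvcosemi} directly, since we have already verified that both $\cd(A)$ and $\cd(B)$ are finite.)

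There is no real obstacle here: the work has already been done in Theorem \ref{thm:cd=cdgs} (itself built on the twisted separable functor machinery of Section \ref{sec:tsf} and Proposition \ref{prop:pdyd}) and in the monoidal invariance of $\cd_{\rm GS}$ from \cite{bi16}. The point of stating the corollary is precisely that in practice one may be able to bound $\cd_{\rm GS}(A)$ for a single Hopf algebra $A$ (for instance through a concrete resolution of the trivial Yetter--Drinfeld module), and then automatically transfer finiteness and the value of $\cd$ to every Hopf algebra monoidally Morita--equivalent to $A$, without having to verify finiteness of $\cd(B)$ separately as Theorem \ref{thm:moninvcosemi} would require.
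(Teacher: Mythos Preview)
Your proof is correct and follows essentially the same approach as the paper: invoke the monoidal invariance $\cd_{\rm GS}(A)=\cd_{\rm GS}(B)$ from \cite{bi16}, then apply Theorem \ref{thm:cd=cdgs} on both sides. Your write-up is simply more detailed (and your parenthetical alternative via Theorem \ref{thm:moninvcosemi} is also valid).
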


\begin{proof}
 We have $\cd_{\rm GS}(A) = \cd_{\rm GS}(B)$, hence $\cd(A)=\cd(B)$ by Theorem \ref{thm:cd=cdgs}.
\end{proof}

As in Section \ref{sec:bimodcat}, Proposition \ref{prop:pdyd} can be strengthened when $S^4={\rm id}$.

\begin{theorem}\label{thm:yds4}
	Let $A$ be Hopf algebra.
The forgetful functor $\Omega_{A} :\yd_A^{A} \to \mathcal M_{A}$ is  separable if and only if $A$ is cosemisimple and $S^4={\rm id}$, and in that case we have $\pd_{\yd_A^{A}}(V)=\pd_A(V)$ for any Yetter-Drinfeld module $V$. 
\end{theorem}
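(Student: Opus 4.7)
The plan is to prove the two directions of the equivalence separately, and then to obtain the equality of projective dimensions from the untwisted specialization of our earlier machinery.

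For the "if" direction, I would first observe that $S^4 = \mathrm{id}$ forces $\psi^{-2} = \varepsilon$, via precisely the convolution argument used in the proof of Proposition \ref{prop:bims4}. Consequently $\theta = \psi^2 * \mathrm{id}$ equals the identity, and the twisted $A$-linearity hypothesis in Lemma \ref{lem:keyaverage} degenerates to genuine $A$-linearity. Combined with Proposition \ref{prop:avecomod} (which guarantees that the averaging ${\bf M}_{V,W}$ fixes every Yetter-Drinfeld morphism pointwise, since such a map is in particular $A$-colinear), this yields, for each pair $V,W$ of Yetter-Drinfeld modules, a natural retraction ${\bf M}_{V,W} : \Hom_A(V,W) \to \Hom_{\yd_A^A}(V,W)$ of the forgetful inclusion, which is exactly the separability of $\Omega_A$ in the strict, untwisted sense.

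For the converse, I would invoke Rafael's theorem: since $\Omega_A$ has a right adjoint $-\#A$, its separability is equivalent to the unit of adjunction being a split monomorphism of functors. Applied to the trivial Yetter-Drinfeld module $k$, the cofree object $k \# A$ identifies with $A$ equipped with its regular right coaction and its right adjoint action, and a retraction of the unit provides a Yetter-Drinfeld morphism $\mu : A \to k$ with $\mu(1) = 1$. Colinearity turns $\mu$ into a normalized right integral, so that $A$ is cosemisimple and $\mu = h$ by uniqueness; $A$-linearity for the adjoint action becomes the ad-invariance $h(S(b_{(1)}) a b_{(2)}) = \varepsilon(b) h(a)$ for all $a,b \in A$. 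Using the KMS relation $h(xy) = h(y \sigma(x))$ to move $S(b_{(1)})$ past $a b_{(2)}$ rewrites this identity as $h(a \cdot [b_{(2)} \sigma S(b_{(1)}) - \varepsilon(b)1]) = 0$ for all $a$, and non-degeneracy of the Haar functional gives $b_{(2)} \sigma S(b_{(1)}) = \varepsilon(b) 1$. Applying $S$ turns the latter into $(S\sigma S) * S = \varepsilon$ in $\End(A)$, so that $S \sigma S = \mathrm{id}$ by uniqueness of convolution inverses. A direct calculation, based on $\sigma = \psi * \mathrm{id} * \psi$, $S^2 = \psi * \mathrm{id} * \psi^{-1}$, $\psi \circ S = \psi^{-1}$ (valid since $\psi$ is a character), and the convolution anti-homomorphism rule $(g*h) \circ S = (h \circ S) * (g \circ S)$, then identifies
\[
S \sigma S \;=\; \psi^{-1} * S^2 * \psi^{-1} \;=\; \psi^{-1} * \psi * \mathrm{id} * \psi^{-1} * \psi^{-1} \;=\; \mathrm{id} * \psi^{-2}.
\]
The equation $\mathrm{id} * \psi^{-2} = \mathrm{id}$ forces $\psi^{-2} = \varepsilon$ (convolve on the left by $S$), whence $S^4 = \psi^2 * \mathrm{id} * \psi^{-2} = \mathrm{id}$. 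I expect the careful bookkeeping of Sweedler indices and convolution identities in this last step to be the main technical point; everything else is formal.

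Finally, the equality $\pd_{\yd_A^A}(V) = \pd_A(V)$ for every Yetter-Drinfeld module $V$ will follow from the separable specialization of Proposition \ref{prop:pdtsf}: with $\Theta = \mathrm{id}$ and $\mathcal F$ taken to be the whole class of objects of $\yd_A^A$, the surjectivity conclusion of Lemma \ref{lem:embedextF} holds for arbitrary $W$, so the finiteness hypothesis in Proposition \ref{prop:pdyd} becomes unnecessary and the argument given there delivers the claimed equality.
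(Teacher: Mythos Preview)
Your proposal is correct and follows essentially the same approach as the paper: for the ``if'' direction both of you reduce to $\theta={\rm id}$ via the argument of Proposition~\ref{prop:bims4}, and for the converse both apply Rafael's criterion at the trivial object to obtain the Haar integral together with the ad-invariance identity $h(S(b_{(1)})ab_{(2)})=\varepsilon(b)h(a)$, then combine KMS with non-degeneracy to force $\psi^{-2}=\varepsilon$. The only cosmetic difference is in this last manipulation: the paper reuses the identity $h(S(a_{(1)})xa_{(2)})=\psi^{-2}(a_{(2)})h\!\left(xa_{(3)}S^{-1}(a_{(1)})\right)$ already recorded in the proof of Proposition~\ref{prop:bims4} and simply applies $\varepsilon$ to the resulting relation, whereas you rederive the same conclusion by computing $S\sigma S={\rm id}*\psi^{-2}$ directly.
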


\begin{proof}
If $A$ is cosemisimple and $S^4={\rm id}$,	we see, as in the proof of Proposition \ref{prop:bims4}, that the automorphism $\theta$ of $A$ is the identity, and that $\Omega_{A} :\yd_A^{A} \to \mathcal M_{A}$ is  indeed separable, and the assertion on projective dimensions, which was already proved in \cite[Section 6]{bi18}, follows similarly.

Assume now that   $\Omega_{A} :\yd_A^{A} \to \mathcal M_{A}$ is  separable. Since $\Omega_{A}$ admits the right adjoint $-\#A$, the characterization of separability for functors that admit a right adjoint in \cite{raf} gives in particular an $A$-colinear and $A$-linear map
$$ \eta : k \# A \to k \ \text{with} \ \eta(1)=1$$
By the $A$-colinearity and $\eta(1)=1$, we have that $\eta=h$ is a Haar integral on $A$, which is thus cosemisimple.
The $A$-linearity of $h$ gives,
for any $a,x \in A$,
$$h(S(a_{(1)})xa_{(2)}) = \varepsilon(a)h(x)$$
We have seen in the proof of Proposition \ref{prop:bims4} that
for any $a,x \in A$,
$$h(S(a_{(1)})xa_{(2)}) = \psi^{-2}(a_{(2)})h\left(xa_{(3)}S^{-1}(a_{(1)})\right)
$$ 
Hence we have for any $a,x \in A$
$$h\left( x(\varepsilon(a) -  \psi^{-2}(a_{(2)})a_{(3)}S^{-1}(a_{(1)}))\right)= 0$$
The non-degeneracy of the Haar integral (which follows from the orthogonality relations) then gives, for any $a\in A$
$$\varepsilon(a)1 = \psi^{-2}(a_{(2)})a_{(3)}S^{-1}(a_{(1)})$$
Hence applying $\varepsilon$ gives
 $\varepsilon =  \psi^{-2}$, and we thus have $S^4={\rm id}$.
\end{proof}

We finish the section by noticing that Yetter-Drinfeld modules are also useful outside the cosemisimple case. Recall \cite{bi16} that a Yetter-Drinfeld module is said to be relative projective if it is a direct summand of a free Yetter-Drinfeld module, and let us say that a Hopf algebra is \textsl{Yetter-Drinfeld smooth} if the trivial object $k$ has a finite resolution by relative projective Yetter-Drinfeld modules that are finitely generated as modules. 

\begin{theorem}\label{thm:ydsmooth}
	Let  $A$, $B$ be Hopf algebras  that have equivalent  linear tensor categories of comodules: 
$\mathcal M^A \simeq^{\otimes} \mathcal M^B$. If $A$ and $B$ have bijective antipode and $A$ is Yetter-Drinfeld smooth, then we have $\cd(A)=\cd(B)$.
\end{theorem}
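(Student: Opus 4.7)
The strategy is to reduce the statement to Theorem \ref{thm:mismooth} via two observations: (i) Yetter-Drinfeld smoothness implies ordinary homological smoothness, and (ii) Yetter-Drinfeld smoothness is a monoidal invariant of the comodule category. Together with the hypothesis on bijective antipodes, these imply that both $A$ and $B$ are homologically smooth, whence Theorem \ref{thm:mismooth} applies.

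For (i), suppose $P_\bullet \to k$ is a finite resolution in $\yd_A^A$ by relative projective YD modules that are finitely generated as modules. Applying the exact forgetful functor $\Omega_A:\yd_A^A\to\mathcal M_A$ yields a finite resolution of $k_\varepsilon$ in $\mathcal M_A$. Since each $P_i$ is a direct summand of some $V_i\boxtimes A$ with $V_i$ finite-dimensional, and $\Omega_A(V_i\boxtimes A)\cong V_i\otimes A$ is a finitely generated free right $A$-module, the $\Omega_A(P_i)$ are finitely generated projective $A$-modules. Thus $k_\varepsilon$ admits a finite resolution by finitely generated projective $A$-modules, so $A$ is homologically smooth in the sense recalled in Section \ref{sec:hgmon}.

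For (ii), the monoidal equivalence $\mathcal M^A\simeq^\otimes\mathcal M^B$ induces, through the Drinfeld center, a braided monoidal equivalence $\Phi:\yd_A^A\simeq\yd_B^B$ which commutes up to natural isomorphism with the forgetful functors to the comodule categories (by naturality of the center construction). Passing to left adjoints, $\Phi$ intertwines the free YD module constructions, so $\Phi(V\boxtimes A)\cong F(V)\boxtimes B$, where $F$ denotes the underlying equivalence $\mathcal M^A\simeq^\otimes\mathcal M^B$; in particular $\Phi(k)\cong k$. Being monoidal, $F$ preserves dualizable objects and hence finite-dimensional comodules. A short argument using the fundamental theorem of comodules (every element of an $A$-comodule lies in a finite-dimensional subcomodule) shows that a relative projective YD module is finitely generated as an $A$-module if and only if it is a direct summand of $V_0\boxtimes A$ for some finite-dimensional comodule $V_0$. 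This class is therefore preserved by $\Phi$, and so YD-smoothness transfers from $A$ to $B$. By (i), $B$ is then homologically smooth.

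Both $A$ and $B$ are now smooth with bijective antipode, so Theorem \ref{thm:mismooth} gives $\cd(A)=\cd(B)$. The most delicate step is the monoidal invariance of the finite-generation condition in (ii); this rests on the characterization of finite-dimensional comodules as the dualizable objects of $\mathcal M^A$, together with the description of finitely generated relative projective YD modules as summands of $V_0\boxtimes A$ with $\dim V_0<\infty$. The rest is a direct application of adjoint functor machinery and of the previously established smooth case.
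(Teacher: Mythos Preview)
Your proof is correct and follows essentially the same strategy as the paper: Yetter-Drinfeld smoothness implies ordinary smoothness, is preserved under the monoidal equivalence, and then Theorem~\ref{thm:mismooth} applies. The only difference is that where the paper invokes \cite[Theorem 4.1]{bic} for the transport of the relative projective resolution, you spell out the argument directly via the induced equivalence on Drinfeld centers and the characterization of finitely generated relative projectives as summands of $V_0\boxtimes A$ with $V_0$ finite-dimensional; this is exactly the content behind that citation.
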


\begin{proof}
Clearly $A$ is smooth since it is Yetter-Drinfeld smooth, and if we start from a resolution of $k$ be finitely generated relative projective Yetter-Drinfeld modules in $\yd_A^A$, \cite[Theorem 4.1]{bic} ensures that one can transport this resolution to a resolution of $k$ to a finitely generated relative projective Yetter-Drinfeld modules in $\yd_B^B$. Hence $B$ is smooth as well and Theorem \ref{thm:mismooth} concludes the proof. 
	\end{proof}

\section{hopf subalgebras and cohomological dimension}\label{sec:subhopf}

Let $B\subset A$ be a Hopf subalgebra. Under the assumption of faithful flatness of $A$ as a $B$-module, which holds in many situations and in particular if $A$ is cosemisimple \cite{chi14}, we have $\cd(B) \leq  \cd(A)$ \cite[Proposition 3.1]{bi16}. In this section we prove, in view of an example in the next section,  an analogue inequality for Gerstenhaber-Schack cohomological dimension, in the cosemisimple case. Of course, if the  conclusion of Theorem \ref{thm:cd=cdgs} was known to hold for any cosemisimple Hopf algebra, this would become trivial.

We begin with some results of independent interest. Recall \cite{bi16} that a Yetter-Drinfeld module is said to be relative projective if it is a direct summand in a free one.

\begin{proposition}\label{prop:trivyd}
Let $A$ be a Hopf algebra, let $V$ be a Yetter-Drinfeld over $A$ and let $W$ be a right $A$-comodule. Then we have an isomorphism of Yetter-Drinfeld modules
$$(\Omega^A(V) \otimes W)\boxtimes A \simeq V\otimes (W\boxtimes A)$$
 In particular, if $P$ is a relative projective Yetter-Drinfeld module, so is the Yetter-Drinfeld module $V\otimes P$.
\end{proposition}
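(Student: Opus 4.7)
The plan is to exhibit an explicit isomorphism of Yetter--Drinfeld modules and then derive the relative projectivity statement via a direct summand argument. As a first step I would identify the underlying vector spaces: both sides are $V \otimes W \otimes A$. On $(\Omega^A(V) \otimes W) \boxtimes A$ the right $A$-action is multiplication on the last factor, and the coaction is the free-YD coaction built from the diagonal comodule structure on $\Omega^A(V) \otimes W$; on $V \otimes (W \boxtimes A)$, the action and coaction are the tensor product action and coaction in $\yd_A^A$, with $W\boxtimes A$ carrying its free Yetter--Drinfeld structure.

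Next I would invoke the adjunction $-\boxtimes A \dashv \Omega^A$. Its unit at $\Omega^A(V) \otimes W$ is the inclusion $v \otimes w \mapsto v \otimes w \otimes 1$, and a short check shows that this is $A$-colinear into $\Omega^A(V \otimes (W \boxtimes A))$ equipped with its diagonal coaction: both sides of the colinearity identity reduce to $v_{(0)} \otimes w_{(0)} \otimes 1 \otimes v_{(1)} w_{(1)}$ because $S(1) = 1$. The adjunction then produces a unique Yetter--Drinfeld morphism
\[
\phi : (\Omega^A(V) \otimes W) \boxtimes A \longrightarrow V \otimes (W\boxtimes A),\qquad v \otimes w \otimes a \longmapsto v \cdot a_{(1)} \otimes w \otimes a_{(2)},
\]
obtained by extending $v \otimes w \otimes 1 \mapsto v \otimes w \otimes 1$ along the right $A$-action on the source (the diagonal action on the target).

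To conclude the first assertion, I would check bijectivity by writing down an explicit two-sided inverse
\[
\psi : V \otimes (W \boxtimes A) \longrightarrow (\Omega^A(V) \otimes W) \boxtimes A,\qquad v \otimes w \otimes a \longmapsto v \cdot S(a_{(1)}) \otimes w \otimes a_{(2)},
\]
for which both $\phi \circ \psi = \id$ and $\psi \circ \phi = \id$ follow from the antipode axiom $a_{(1)} S(a_{(2)}) = \varepsilon(a) = S(a_{(1)}) a_{(2)}$ together with coassociativity; in particular, bijectivity of $S$ is \emph{not} required.

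For the \emph{in particular} statement, if $P$ is a relative projective Yetter--Drinfeld module, it is by definition a direct summand of some free module $W \boxtimes A$. Since $- \otimes -$ preserves direct sums in each variable in $\yd_A^A$, the module $V \otimes P$ is a direct summand of $V \otimes (W \boxtimes A)$, which by the established isomorphism is the free Yetter--Drinfeld module $(\Omega^A(V) \otimes W) \boxtimes A$; hence $V \otimes P$ is relative projective. The main obstacle is purely bookkeeping: the adjunction renders the formula for $\phi$ forced and its Yetter--Drinfeld-morphism property automatic, so the only real calculation is the antipode-based verification that $\psi$ inverts $\phi$, which is routine.
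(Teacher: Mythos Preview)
Your proof is correct and follows essentially the same route as the paper: you write down the identical explicit isomorphism $v\otimes w\otimes a\mapsto v\cdot a_{(1)}\otimes w\otimes a_{(2)}$ with inverse $v\otimes w\otimes a\mapsto v\cdot S(a_{(1)})\otimes w\otimes a_{(2)}$, and deduce the relative projectivity statement by the same direct-summand argument. The only cosmetic difference is that you justify the Yetter--Drinfeld morphism property of $\phi$ via the adjunction $-\boxtimes A\dashv\Omega^A$, whereas the paper simply asserts it is ``easily seen''; this is a pleasant conceptual shortcut but not a different approach.
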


\begin{proof}
The map 
\begin{align*}
(\Omega^A(V)\otimes W)\boxtimes A &\longmapsto  V\otimes (W\boxtimes A) \\
v\otimes w \otimes a &\longmapsto v\cdot a_{(1)}\otimes w \otimes a_{(2)}
\end{align*}	
is easily seen to be a morphism of Yetter-Drinfeld modules, and its inverse is given by $v \otimes w\otimes a \mapsto  v\cdot S(a_{(1)})\otimes w \otimes a_{(2)}$. If $P$ is relative projective, let $W$ be a right $A$-comodule and $Q$ be a Yetter-Drinfeld module such that $W\boxtimes A \simeq P\oplus Q$. We then have $(V\otimes P) \oplus (V\otimes Q) \simeq V\otimes (W\boxtimes A) \simeq (\Omega^A(V)\otimes W)\boxtimes A$, which proves that $V\otimes P$ is relative projective.
	\end{proof}

\begin{corollary}\label{cor:cdgsglobal}
If $A$ is a cosemisimple Hopf algebra, we have
\begin{align*}
&{\rm cd}_{\rm GS}(A) = \pd_{\yd_A^{A}}(k)= {\rm max}\left\{n :  \ext^n_{\yd_A^{A}}(k, V) \not=0 \ {\rm for} \ {\rm some} \ V \in \yd_A^{A} \right\} 
\\
& = {\rm max}\left\{\pd_{\yd_A^{A}}(V), \ V \in  \yd_A^{A}\right\}
 =  {\rm max}\left\{n :  \ext^n_{\yd_A^{A}}(V, W) \not=0 \ {\rm for} \ {\rm some} \ V,W \in \yd_A^{A} \right\}\\
& = {\rm min}\left\{n : \ext^{n+1}_{\yd_A^{A}}(V, W) =0 \ {\rm for} \ {\rm any} \  V,W \in \yd_A^{A} \right\}  \\
&=  {\rm max}\left\{{\rm injd}_{\yd_A^{A}}(V), \ V \in  \yd_A^{A}\right\}
\end{align*}
where ${\rm injd}_{\yd_A^{A}}$ is the injective dimension in the category $\yd_A^{A}$.
\end{corollary}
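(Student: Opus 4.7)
The plan is to reduce all six displayed equalities to the single assertion
\[
\sup\{\pd_{\yd_A^{A}}(V) : V \in \yd_A^{A}\} = \pd_{\yd_A^{A}}(k);
\]
once this is in hand, the remaining equivalences are formal consequences of standard homological algebra in an abelian category with enough projectives and enough injectives, both of which are available for $\yd_A^{A}$ as recalled in Section \ref{sec:ydcd}.

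The inequality $\pd_{\yd_A^{A}}(k) \le \sup_V \pd_{\yd_A^{A}}(V)$ is trivial since $k \in \yd_A^A$. For the reverse, I would fix $V \in \yd_A^A$ and pick a projective resolution $P_\bullet \to k$ in $\yd_A^A$ of length $n := \pd_{\yd_A^A}(k)$; by the cosemisimplicity characterisation recalled in the excerpt, each $P_i$ is a direct summand of a free Yetter--Drinfeld module. Tensor the resolution with $V$ on the left using the YD tensor product: since the underlying $k$-linear tensor product is exact, $V \otimes P_\bullet \to V \otimes k \simeq V$ is still a resolution. By Proposition \ref{prop:trivyd}, the YD tensor product of $V$ with any free (hence any relative projective) Yetter--Drinfeld module is again relative projective, so each $V \otimes P_i$ is projective in $\yd_A^{A}$. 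This exhibits a projective resolution of $V$ of length at most $n$, yielding $\pd_{\yd_A^{A}}(V) \le n$.

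Granting this, the Ext-style reformulations are routine. The identity of $\cd_{\rm GS}(A)$ with $\pd_{\yd_A^{A}}(k)$ and with $\max\{n : \ext^n_{\yd_A^A}(k,V) \neq 0\}$ is definitional. The supremum over all pairs $(V,W)$ is then bounded above by $n$ (since $\pd_{\yd_A^A}(V)\le n$ forces $\ext^{n+1}_{\yd_A^A}(V,W)=0$) and below by $\max\{n : \ext^n_{\yd_A^A}(k,V) \neq 0\}$, so all three agree; the $\min$/$\max$ reformulation is then immediate. Finally, the equality with $\sup_V \mathrm{injd}_{\yd_A^{A}}(V)$ follows from the classical fact that in an abelian category with enough projectives and enough injectives the global dimension, computed via $\ext^{n+1}(-,-) = 0$, coincides with the supremum of injective dimensions. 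The only genuinely non-formal ingredient is the projective-dimension bound, which in turn relies entirely on Proposition \ref{prop:trivyd}; no new averaging argument or appeal to Theorem \ref{thm:cd=cdgs} is needed here.
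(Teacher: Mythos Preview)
Your argument is correct and follows essentially the same route as the paper: take a projective resolution of $k$ of length $\pd_{\yd_A^{A}}(k)$, tensor by $V$ and invoke Proposition \ref{prop:trivyd} together with the cosemisimple identification of projectives with relative projectives to obtain a projective resolution of $V$ of the same length, then deduce the remaining equalities by classical homological algebra. The paper's proof is just a more terse version of what you wrote.
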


\begin{proof}
The first two equalities have already been discussed. Let $P_* \to k$ be resolution of $k$ by projective objects, of length $n= \pd_{\yd_A^{A}}(k)$. Since $A$ is cosemisimple, the projective objects are the relative projectives, so if $V$ is a Yetter-Drinfeld module, tensoring 	the above resolution with $V$ yields, by Proposition \ref{prop:trivyd}, a length $n$ resolution of $V$ by projective objects. This gives the third equality, and the other ones then follow by classical arguments.
\end{proof}

Let $B\subset A$ be a Hopf subalgebra. Recall \cite{bi18} that there is a pair of adjoint functors
\begin{align*}
\yd^A_A & \longrightarrow \yd^B_B \quad \quad \yd^B_B  \longrightarrow \yd^A_A \\
X & \longmapsto X^{(B)} \quad \quad \quad V \longmapsto V\boxtimes_BA
\end{align*}
where
\begin{enumerate}
	\item for a Yetter-Drinfeld module $X$ over $A$, $X^{(B)}=\{x \in X \ | \ x_{(0)} \otimes x_{(1)} \in X \otimes B\}$ has the restricted $B$-module structure;
	\item for a Yetter-Drinfeld module $V$ over $B$,  $V\boxtimes_BA$ is the induced module $ V\otimes_BA$, with $A$-comodule structure given by
	$$(v \otimes_B a)_{(0)} \otimes (v\otimes_Ba)_{(1)} = v_{(0)} \otimes_B a_{(2)} \otimes S(a_{(1)}) v_{(1)} a_{(3)}$$
\end{enumerate}

\begin{lemma}\label{lem:restricteddirect}
Let $B\subset A$ be a Hopf subalgebra, and assume that $A$ is cosemisimple. Let $V$ be a Yetter-Drinfeld module over $B$. Then $V$ is isomorphic to a direct summand of $(V\boxtimes_BA)^{(B)}$.	
\end{lemma}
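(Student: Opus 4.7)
My plan is to show that the canonical map $\iota\colon V \to (V\boxtimes_B A)^{(B)}$ defined by $\iota(v) = v\otimes_B 1$ is a split monomorphism in $\yd^B_B$. First, I would verify that $\iota$ is indeed a morphism in $\yd^B_B$: right $B$-linearity is the computation $\iota(v\cdot b) = vb\otimes_B 1 = v\otimes_B b = \iota(v)\cdot b$, while right $B$-colinearity follows from specializing the formula for the $A$-coaction on $V\boxtimes_B A$ at $a = 1$, yielding $(v\otimes_B 1)_{(0)} \otimes (v\otimes_B 1)_{(1)} = v_{(0)}\otimes_B 1 \otimes v_{(1)}$, with $v_{(1)} \in B$ since $V$ is a $B$-comodule. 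Injectivity of $\iota$ is a consequence of the flatness of $A$ over $B$, which holds in the cosemisimple case \cite{chi14}.

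Next I would construct a retraction $\pi\colon (V\boxtimes_B A)^{(B)} \to V$ in $\yd^B_B$ via the averaging machinery of Section \ref{sec:ydcd}. Since $A$ is cosemisimple, so is $B$ (its Haar integral being the restriction of $h$), and both carry modular functionals as in Theorem \ref{thm:modular}; denote by $\theta_B$ the modular automorphism of $B$. The strategy is to produce a $k$-linear map $s_0\colon (V\boxtimes_B A)^{(B)} \to V$ satisfying the twisted $B$-equivariance condition $s_0(\xi\cdot b) = s_0(\xi)\cdot \theta_B(b)$ of Lemma \ref{lem:keyaverage}, together with the retraction identity $s_0(v\otimes_B 1) = v$ for $v \in V$. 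Setting $\pi := \mathbf{M}(s_0)$ then automatically produces a $B$-YD morphism by Proposition \ref{prop:avecomod} and Lemma \ref{lem:keyaverage}, and a direct calculation using iterated $B$-coaction on $V$ together with the counit identity $b_{(1)}S(b_{(2)}) = \varepsilon(b)1$ and $h(1) = 1$ gives
$$\pi(\iota(v)) = h\bigl(v_{(1)}S(v_{(2)})\bigr)\,v_{(0)} = \varepsilon(v_{(1)})\,v_{(0)} = v,$$
so that $\pi\circ \iota = \id_V$ as required.

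The hard part will be constructing $s_0$ satisfying both the retraction property on $V\otimes_B 1$ and the $\theta_B$-twisted equivariance globally: already on the subspace $V\otimes_B 1$, the two conditions together would force $vb = v\cdot\theta_B(b)$ for all $v\in V$, $b\in B$, which fails unless $\theta_B = \id$ (the $S^4 = \id$ case covered by Theorem \ref{thm:yds4}). To resolve this, one must exploit the strict defining condition of $(V\boxtimes_B A)^{(B)}$, namely that the $A$-coaction of an element $\xi$ lies in $(V\boxtimes_B A)\otimes B$ and not merely in $(V\boxtimes_B A)\otimes A$, and use the modular identities for $h$ from Theorem \ref{thm:modular} to absorb the twist. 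This is in the same spirit as the computations verifying Lemma \ref{lem:averagebimod} and Lemma \ref{lem:keyaverage}, where the modular twist is delicately balanced against the Hopf algebra structure of the object being averaged.
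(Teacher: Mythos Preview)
Your setup of the unit map $\iota$ is fine and matches the paper. The gap is exactly where you yourself locate it: the twisted-equivariance condition $s_0(\xi\cdot b) = s_0(\xi)\cdot\theta_B(b)$ from Lemma~\ref{lem:keyaverage} is genuinely incompatible with $s_0(v\otimes_B 1) = v$, and nothing in your final paragraph explains how to escape this. Any $s_0$ of the natural form $v\otimes_B a\mapsto v\cdot\phi(a)$ with $\phi\colon A\to B$ would need $\phi$ to be left $B$-linear (for well-definedness on the balanced tensor product), to satisfy $\phi(ab)=\phi(a)\theta_B(b)$ for $b\in B$ (for twisted equivariance), and to send $1$ to $1$ (for the retraction); these three force $\phi|_B$ to be simultaneously $\id_B$ and $\theta_B$. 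Invoking ``the strict defining condition of $(V\boxtimes_B A)^{(B)}$'' and ``modular identities'' is not a construction; as written, the argument stops at the obstruction you diagnosed.

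The paper bypasses averaging entirely. By \cite[Theorem~2.1]{chi14}, cosemisimplicity of $A$ yields a decomposition $A = B\oplus T$ in which $T$ is simultaneously a sub-$B$-bimodule and a subcoalgebra of $A$. The associated projection $E\colon A\to B$ is therefore both a $B$-bimodule map and a coalgebra map, and one checks the identity $S(E(a)_{(1)})\otimes E(a)_{(2)}\otimes E(a)_{(3)} = S(a_{(1)})\otimes E(a_{(2)})\otimes a_{(3)}$. The retraction is then simply $v\otimes_B a\mapsto v\cdot E(a)$: the $B$-bimodule property of $E$ gives well-definedness over $\otimes_B$ and right $B$-linearity, the displayed identity gives $B$-colinearity, and $E(1)=1$ gives $\pi\circ\iota=\id_V$. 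No modular functional, no twist, no averaging --- the retraction is a Yetter--Drinfeld morphism on the nose.
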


\begin{proof}
It is immediate to check that	we have a morphism of Yetter-Drinfeld modules $$i: V \to (V\boxtimes_BA)^{(B)}, \ v \mapsto v\otimes_B 1$$
Assume now that $A$ is cosemisimple. Then, by the proof of Theorem 2.1 in \cite{chi14}, there exists a sub-$B$-bimodule $T\subset A$, which is as well a subcoalgebra, such that $A = B \oplus T$.  Let $E : A \to B$ be the corresponding projection: $E(b)=b$ for $b \in B$ and $E(a)=0$ for $a \in T$. By construction $E$ is a $B$-bimodule map and a coalgebra map, and it is immediate to check that we have for any $a\in A$
$$S(E(a)_{(1)}) \otimes E(a)_{(2)} \otimes E(a)_{(3)} = S(a_{(1)}) \otimes E(a_{(2)}) \otimes a_{(3)} $$
From this, we see that the map 
$$ (V\boxtimes_BA)^{(B)}\to V, \ v\otimes_B a \to v\cdot E(a)$$
is a morphism of Yetter-Drinfeld modules. Since this map is clearly a retraction to $i$, this proves the lemma.
	\end{proof}

We now have all the ingredients to prove the expected result.

\begin{proposition}\label{prop:subcdgs}
Let $B\subset A$ be a Hopf subalgebra. If $A$ is cosemisimple, we have $\cd_{\rm GS}(B)\leq \cd_{\rm GS}(A)$.	
	\end{proposition}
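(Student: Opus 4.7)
The plan is to combine the induction--restriction adjunction for Yetter--Drinfeld modules with Lemma \ref{lem:restricteddirect} and the ``global'' description of $\cd_{\rm GS}$ provided by Corollary \ref{cor:cdgsglobal}.

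Before anything else, I would observe that $B$ is itself cosemisimple: since $B$ is a sub-Hopf algebra, $\Delta(b)\in B\otimes B$ for every $b\in B$, and the defining relation $h(b_{(1)}) b_{(2)} = h(b)\cdot 1$ for a Haar integral $h$ of $A$ already takes place inside $B$, so the restriction $h|_B$ is a Haar integral on $B$. Thus Corollary \ref{cor:cdgsglobal} applies to both $A$ and $B$.

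Next, since $A$ is cosemisimple, $A$ is faithfully flat as a (left and right) $B$-module by \cite{chi14}. In particular the induction functor
$$F : \yd^B_B \longrightarrow \yd^A_A, \quad V \longmapsto V \boxtimes_B A$$
is exact. Its right adjoint $(-)^{(B)}$ therefore preserves injective objects, and as both $\yd^B_B$ and $\yd^A_A$ have enough injective objects, the adjunction lifts to an isomorphism of Ext groups: for any $V\in \yd^B_B$ and any $X\in \yd^A_A$,
$$\ext^*_{\yd^B_B}\bigl(V, X^{(B)}\bigr) \simeq \ext^*_{\yd^A_A}\bigl(V\boxtimes_B A,\, X\bigr).$$

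Specializing $X = W\boxtimes_B A$ for $V,W\in \yd^B_B$, Lemma \ref{lem:restricteddirect} tells me that $W$ is a direct summand of $(W\boxtimes_B A)^{(B)}$ in $\yd^B_B$, and hence $\ext^n_{\yd^B_B}(V,W)$ is a direct summand of
$$\ext^n_{\yd^B_B}\bigl(V, (W\boxtimes_B A)^{(B)}\bigr) \simeq \ext^n_{\yd^A_A}\bigl(V\boxtimes_B A,\, W\boxtimes_B A\bigr).$$
When $n > \cd_{\rm GS}(A)$, the right-hand side vanishes by Corollary \ref{cor:cdgsglobal}, hence so does $\ext^n_{\yd^B_B}(V,W)$. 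A second application of Corollary \ref{cor:cdgsglobal}, now to $B$, then yields $\cd_{\rm GS}(B)\leq \cd_{\rm GS}(A)$. I do not anticipate a real obstacle: the argument is nearly formal once the adjunction, the splitting lemma and the global description of $\cd_{\rm GS}$ are in hand, the single non-formal ingredient being Chirvasitu's faithful flatness result that guarantees exactness of $F$.
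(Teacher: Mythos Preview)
Your approach is essentially the same as the paper's: adjunction $\Ext$-isomorphism, Lemma \ref{lem:restricteddirect}, and Corollary \ref{cor:cdgsglobal}. The paper works with $V=k$ throughout (so only Corollary \ref{cor:cdgsglobal} for $A$ is needed), while you run the argument for arbitrary $V,W\in\yd_B^B$; this is harmless since you checked $B$ is cosemisimple.

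There is, however, a small gap in your derivation of the $\Ext$-isomorphism. From ``$F$ is exact'' you correctly deduce that $(-)^{(B)}$ preserves injectives, but this alone does \emph{not} give
\[
\Ext^*_{\yd_B^B}(V,X^{(B)})\simeq \Ext^*_{\yd_A^A}(V\boxtimes_B A,\,X).
\]
If $X\to I^\bullet$ is an injective resolution, then $(I^\bullet)^{(B)}$ is a complex of injectives, but it is an injective \emph{resolution} of $X^{(B)}$ only if $(-)^{(B)}$ is exact; otherwise one gets a Grothendieck spectral sequence with $E_2^{p,q}=\Ext^p_{\yd_B^B}(V,R^q(-)^{(B)}(X))$ rather than a clean isomorphism. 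The paper handles this by citing \cite[Proposition 3.3]{bi18}, which requires both flatness \emph{and} coflatness of $B\subset A$. In your write-up you should either invoke that reference, or observe directly that $(-)^{(B)}$ is exact: since $A$ is cosemisimple, $A=B\oplus T$ as coalgebras (cf.\ the proof of Lemma \ref{lem:restricteddirect}), hence any $A$-comodule splits as $X=X^{(B)}\oplus X^{(T)}$ and the projection onto $X^{(B)}$ is exact on underlying spaces. With this addition your argument is complete.
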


\begin{proof}
We can assume that $\cd_{\rm GS}(A)=n$ is finite. Since $A$ is cosemisimple, \cite[Theorem 2.1]{chi14} ensures that $A$ is flat as a left $B$-module, and  $B\subset A$ is coflat. Hence,  by \cite[Proposition 3.3]{bi18} we have
$${\rm Ext}_{\yd_A^A}^*(V \boxtimes_B A, X) \simeq {\rm Ext}_{\yd_B^B}^*(V, X^{(B)})$$ 
for any Yetter-Drinfeld module $X$ over $A$, and any Yetter-Drinfeld module $V$ over $B$. Hence, for $V=k$, Corollary \ref{cor:cdgsglobal} yields
 $${\rm Ext}_{\yd_B^B}^{n+1}(k, X^{(B)})\simeq {\rm Ext}_{\yd_A^A}^{n+1}(k \boxtimes_B A, X)=\{0\} $$
 for  any Yetter-Drinfeld module $X$ over $A$.  Lemma \ref{lem:restricteddirect} ensures that any Yetter-Drinfeld module over $B$ is a direct summand in one of type $X^{(B)}$, so we get $\cd_{\rm GS}(B)\leq n$, as required.
\end{proof}

\section{examples}\label{sec:exam}

We now  use the previous results to examine some examples that were not  covered by  the literature.

\subsection{Universal cosovereign Hopf algebras} In this subsection we complete some of the results of \cite{bi18} on the cohomological dimension of the universal cosovereign Hopf algebras.
Recall that for $n \geq 2$ and  $F \in {\rm GL}_n(k)$,  the algebra $H(F)$ is the algebra
generated by
$(u_{ij})_{1 \leq i,j \leq n}$ and
$(v_{ij})_{1 \leq i,j \leq n}$, with relations:
$$ {u} {v^t} = { v^t} u = I_n ; \quad {vF} {u^t} F^{-1} = 
{F} {u^t} F^{-1}v = I_n,$$
where $u= (u_{ij})$, $v = (v_{ij})$ and $I_n$ is
the identity $n \times n$ matrix. The algebra
$H(F)$ has a  Hopf algebra structure
defined by
\begin{gather*}
\Delta(u_{ij}) = \sum_k u_{ik} \otimes u_{kj}, \quad
\Delta(v_{ij}) = \sum_k v_{ik} \otimes v_{kj}, \\
\varepsilon (u_{ij}) = \varepsilon (v_{ij}) = \delta_{ij}, \quad 
S(u) = {v^t}, \quad S(v) = F { u^t} F^{-1}.
\end{gather*}
We refer the reader to \cite{bi07,bi18} for more information and background on the universal cosovereign Hopf algebras $H(F)$. 

Recall \cite{bi18} that we say that a matrix $F \in \GL_n(k)$ is 

$\bullet$ normalizable if ${\rm tr}(F) \not= 0$ and  $ {\rm tr} (F^{-1})\not=0$ or ${\rm tr}(F)=0={\rm tr} (F^{-1})$;

$\bullet$ generic if it is normalizable and the solutions of the equation
$q^2 -\sqrt{{\rm tr}(F){\rm tr}(F^{-1})}q +1 = 0$ are generic, i.e. are not roots of unity of order $\geq 3$ (this property does not depend on the choice of the above square root);

$\bullet$ an asymmetry if there exists $E \in {\rm GL}_n(k)$ such that $F=E^tE^{-1}$.

\begin{theorem}
	Let $F\in \GL_n(k)$, $n\geq 2$. If $F$ is an asymmetry or $F$ is generic, we have $\cd(H(F)) = 3$.
\end{theorem}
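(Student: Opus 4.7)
The plan is to deduce the statement from the monoidal invariance results established in the paper, together with the partial computation of $\cd(H(F))$ already carried out in \cite{bi18}. Recall that \cite{bi18} gives $\cd(H(F_0)) = 3$ for a restricted class of matrices $F_0$ (essentially the normalizable ones whose associated antipode satisfies $S^4 = \id$, so that the explicit length-$3$ Yetter-Drinfeld resolution of the trivial module constructed there is available); in particular, for such an $F_0$ one also obtains $\cd_{\rm GS}(H(F_0)) = 3 < \infty$. Moreover, for any normalizable $F$ (a condition satisfied under both hypotheses of the theorem), $H(F)$ is cosemisimple by \cite{bi07}, so the setting of Corollary \ref{cor:invcdgs} is in force.

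Next, I would invoke the monoidal classification of universal cosovereign Hopf algebras from \cite{bi07,bi18}. Under the generic hypothesis, the $q$-parameter attached to $F$ is generic, and one obtains a monoidal equivalence $\mathcal M^{H(F)} \simeq^\otimes \mathcal M^{H(F_0)}$ with $F_0$ in the normalized class already treated in \cite{bi18}. Under the asymmetry hypothesis $F = E^t E^{-1}$, the factorization yields a bi-Galois object between $H(F)$ and a $B(E)$-type Hopf algebra whose comodule category is again monoidally equivalent to $\mathcal M^{H(F_0)}$ for some $F_0$ covered by \cite{bi18}.

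With these two ingredients in place, the conclusion is immediate: since $H(F)$ and $H(F_0)$ are cosemisimple with monoidally equivalent comodule categories and $\cd_{\rm GS}(H(F_0))$ is finite, Corollary \ref{cor:invcdgs} yields
$$\cd(H(F)) = \cd(H(F_0)) = 3.$$
The main obstacle is not in the machinery developed here, but in extracting, in each of the two cases, the appropriate bi-Galois object from \cite{bi07,bi18}. The asymmetry case relies on the free orthogonal structure attached to $E$ via the factorization $F = E^t E^{-1}$; the generic case requires the genericity assumption precisely so that the cocycle deformation connecting $H(F)$ to the normalized $H(F_0)$ is well defined and non-degenerate, avoiding the root-of-unity obstructions that would spoil the construction.
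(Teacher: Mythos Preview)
Your proposal has a genuine gap in the asymmetry case, and rests on a mischaracterization of what \cite{bi18} actually proves.

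First, the claim that ``for any normalizable $F$, $H(F)$ is cosemisimple by \cite{bi07}'' is false: normalizability is strictly weaker than genericity, and it is the generic condition (not roots of unity of order $\geq 3$) that guarantees cosemisimplicity. An asymmetry $F=E^tE^{-1}$ is always normalizable (one checks $\Tr(F)=\Tr(F^{-1})$), but it need not be generic, so $H(F)$ need not be cosemisimple. Your plan to invoke Corollary \ref{cor:invcdgs}, which requires both Hopf algebras to be cosemisimple, therefore breaks down for non-generic asymmetries. The paper avoids this entirely: \cite[Theorem 2.1]{bi18} already establishes $\cd(H(F))=3$ for every asymmetry $F$, with no monoidal equivalence or cosemisimplicity argument needed.

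Second, you misread the scope of \cite[Theorem 2.1]{bi18}: it does not restrict to matrices with $S^4=\id$. It gives $\cd(H(F))=3$ for all asymmetries, and $\cd_{\rm GS}(H(F))=3$ for all generic $F$. In particular, for a generic $F$ with $q$-parameter not a fourth root of unity there is no $F_0$ in the monoidal equivalence class of $H(F)$ with $S^4=\id$, so the reduction you sketch cannot land in that class anyway. Once one knows $\cd_{\rm GS}(H(F))=3$ holds for the given generic $F$ itself, the monoidal equivalence detour becomes vacuous: the paper simply applies Theorem \ref{thm:cd=cdgs} directly to $H(F)$ (cosemisimple with finite $\cd_{\rm GS}$) to conclude $\cd(H(F))=\cd_{\rm GS}(H(F))=3$.
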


\begin{proof}
	We know from \cite[Theorem 2.1]{bi18}, that  $\cd(H(F)) = 3$	if $F$ is an asymmery and  that $\cd_{\rm GS}(H(F) = 3$ if $F$ is generic, in which case $H(F)$ is cosemisimple \cite{bi07}, so Theorem \ref{thm:cd=cdgs} gives the result in that case.
\end{proof}

As an illustration of Theorem \ref{thm:cdhgcs},
consider, for $E \in \GL_n(k)$ and $F \in \GL_m(k)$, $n,m \geq 2$, the algebra
	 $H(E,F)$ presented by generators
	$u_{ij}$, $ v_{ij}$, $1\leq i \leq m, 1\leq j \leq n$,
	and relations
	$$u v^t = I_m = v F u^t E^{-1} \quad ; \quad
	v^tu = I_n = F u^t E^{-1} v.$$

\begin{theorem}
If $E$, $F$ are generic, $\Tr(E)=\Tr(F)$ and $\Tr(E^{-1})=\Tr(F^{-1})$, then we have $\cd(H(E,F))=3$.
\end{theorem}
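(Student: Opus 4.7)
The plan is to realize $H(E,F)$ as a Hopf--Galois object over $H(E)$ (equivalently, over $H(F)$) and then invoke Theorem~\ref{thm:cdhgcs}, which was the whole point of developing the twisted separable functor machinery for the cosemisimple case. Concretely, first I would recall from \cite{bi07} that the trace conditions $\Tr(E)=\Tr(F)$ and $\Tr(E^{-1})=\Tr(F^{-1})$ are exactly the normalization conditions needed to make $H(E,F)$ a non-zero $H(E)$-$H(F)$-bi-Galois object: the defining relations $uv^t=I_m=vFu^tE^{-1}$ and $v^tu=I_n=Fu^tE^{-1}v$ are the ``rectangular'' analogues of those defining $H(E)$ and $H(F)$, and the left $H(E)$-coaction (resp.\ right $H(F)$-coaction) on the matrix of generators makes it into a bicomodule algebra for which the canonical Galois map is bijective. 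In particular this yields, via Schauenburg's theorem \cite{sc1}, a monoidal equivalence $\mathcal M^{H(E)}\simeq^{\otimes}\mathcal M^{H(F)}$.

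Next I would use the genericity assumption on both $E$ and $F$ to guarantee that $H(E)$ and $H(F)$ are cosemisimple (again by \cite{bi07}), and to invoke the immediately preceding theorem, which gives $\cd(H(E))=3=\cd(H(F))$. With all the hypotheses in place, Theorem~\ref{thm:cdhgcs} applies: $H(E)$ is cosemisimple with $\cd(H(E))=3<\infty$, and $H(E,F)$ is a right $H(F)$-Galois object (equivalently a left $H(E)$-Galois object), so
\[
\cd(H(E,F))=\cd(H(E))=3,
\]
which is the desired conclusion.

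The only step which is not a routine invocation of earlier material is the identification of $H(E,F)$ as a bi-Galois object under the stated trace conditions; this is the ``main obstacle'', but it is essentially contained in \cite{bi07} (the trace conditions being precisely what ensures the compatibility of the two-sided matrix coactions and non-vanishing of the underlying algebra). Once that is in hand, the rest is a direct application of Theorem~\ref{thm:cdhgcs} combined with the previous theorem, in exact analogy with the way Theorem~\ref{thm:moninvcosemi} was deduced from Theorem~\ref{thm:cdhgcs} in the monoidal-equivalence setting.
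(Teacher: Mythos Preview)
Your proposal is correct and follows essentially the same route as the paper: cite \cite{bi07} for the fact that the trace conditions make $H(E,F)$ an $H(E)$-$H(F)$-bi-Galois object, use genericity to get cosemisimplicity of $H(E)$ and the previous theorem to get $\cd(H(E))=3$, then apply Theorem~\ref{thm:cdhgcs}. The detour through Schauenburg's monoidal equivalence is unnecessary here since Theorem~\ref{thm:cdhgcs} applies directly to Galois objects, but it does no harm.
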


\begin{proof}	
The assumption $\Tr(E)=\Tr(F)$ and $\Tr(E^{-1})=\Tr(F^{-1})$ ensures that $H(E,F)$ is an $H(E)$-$H(F)$-bi-Galois object \cite{bi07}. Hence, since the genericity assumption ensures that $H(E)$  cosemisimple and we know from the previous result that $\cd(H(E))$ and $\cd(H(F))$ are finite, the result follows from Theorem \ref{thm:cdhgcs}. 
\end{proof}

\subsection{Free wreath products} In this subsection we assume that the base field is $k=\C$, since the monoidal equivalences on which we rely \cite{fipi,leta} were obtained in this framework. Before going to the general setting of  Theorem \ref{thm:cdfw}, we feel it is probably worth to present a particular example. So for $n,p \geq 1$, consider, following the notation of \cite{bv}, the algebra $A_h^p(n)$  presented by generators $u_{ij}$, $1\leq i,j \leq n$, and  relations
$$\sum_{j=1}^nu_{ij}^p=1=\sum_{j=1}^nu_{ji}^p, \quad u_{ij}u_{ik}=0= u_{ji}u_{ki}, \ \text{for} \ k \not=j, $$
At $p=1$, $A_h^1(n)=A_s(n)$, the coordinate algebra of Wang's quantum permutation group \cite{wan98}. In general $A_h^p(n)$ is a Hopf algebra  with \cite{bic04}
 $$ \Delta(u_{ij})  = \sum_k u_{ik}\otimes u_{kj}, \ \varepsilon(u_{ij}) =\delta_{ij},  \ S(u_{ij})=u_{ji}^{p-1}$$

The following result, for which the $p=1$ case was obtained in \cite{bi16} (see \cite{bfg} as well, where it is shown that $A_s(n)$ is Calabi-Yau of dimension $3$), will be a  particular instance of the forthcoming Theorem \ref{thm:cdfw}.

\begin{theorem}\label{thm:cdahpn}
	We have, for $p\geq 1$ and $n\geq 4$, $\cd(A_h^p(n))=3$.
	\end{theorem}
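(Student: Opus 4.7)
The plan is to transport the result from the already-known case $p=1$, where $\cd(A_s(n)) = 3$ for $n\geq 4$ was established in \cite{bi16,bfg}, to arbitrary $p\geq 1$ via a monoidal equivalence combined with the monoidal-invariance results proved earlier in the paper.

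I would first record that $A_h^p(n)$ is cosemisimple: it is the coordinate Hopf $\ast$-algebra of a compact matrix quantum group (the free wreath product of $\widehat{\ZZ/p\ZZ}$ with $S_n^+$), and in particular admits a Haar integral. Next, I would appeal to the free wreath product monoidal equivalences of Fima-Pittau \cite{fipi} and Lemeux-Tarrago \cite{leta} to produce a cosemisimple Hopf algebra $B$, built from $\C[\ZZ/p\ZZ]$ and $A_s(n)$ via a free-product-type construction, together with a monoidal equivalence $\mathcal{M}^{A_h^p(n)}\simeq^\otimes \mathcal{M}^{B}$.

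The heart of the argument is then to show that $\cd_{\rm GS}(B)=3$. The lower bound $\cd_{\rm GS}(B)\geq 3$ would follow from Proposition \ref{prop:subcdgs} applied to the Hopf subalgebra $A_s(n)\subset B$, together with the equality $\cd_{\rm GS}(A_s(n))=\cd(A_s(n))=3$ provided by Theorem \ref{thm:cd=cdgs}. The upper bound $\cd_{\rm GS}(B)\leq 3$ would be obtained by constructing a length-$3$ resolution of the trivial Yetter-Drinfeld module over $B$, splicing together the known length-$3$ resolution for $A_s(n)$ and the length-$1$ resolution for $\C[\ZZ/p\ZZ]$ using the free-product structure of $B$.

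Finally, Corollary \ref{cor:invcdgs} applies: both $A_h^p(n)$ and $B$ are cosemisimple and $\cd_{\rm GS}(B)=3$ is finite, so the monoidal invariance of $\cd_{\rm GS}$ forces $\cd(A_h^p(n))=\cd(B)=3$. The principal obstacle is the upper bound on $\cd_{\rm GS}(B)$: identifying the correct monoidal companion $B$ and producing a suitable Mayer-Vietoris-type resolution for Yetter-Drinfeld modules over a free-product/free-wreath Hopf algebra is the delicate step, and is presumably the content of the forthcoming general Theorem \ref{thm:cdfw}.
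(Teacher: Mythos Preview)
Your overall route is the paper's: the statement is the special case $A=\C[\ZZ/p\ZZ]$ of Theorem~\ref{thm:cdfw}, using $A_h^p(n)\simeq\C[\ZZ/p\ZZ]*_w A_s(n)$, and your outline of that proof (produce a monoidal companion, bound its $\cd_{\rm GS}$, invoke Corollary~\ref{cor:invcdgs} and Theorem~\ref{thm:cd=cdgs}) is correct in spirit. Three points of execution differ from the paper and are worth flagging.

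First, the companion produced by \cite{fipi,leta} is a Hopf subalgebra $H$ of the free product $A*\mathcal O(SU_q(2))$ with $q+q^{-1}=\sqrt n$; it is not built from $A_s(n)$, and in particular your assertion ``$A_s(n)\subset B$'' is not correct as stated. Second, the lower bound $\geq 3$ is obtained directly on the original side, with no passage to $B$: $A_s(n)$ sits as a Hopf subalgebra of $A*_w A_s(n)$, hence $\cd(A*_w A_s(n))\geq\cd(A_s(n))=3$ by \cite[Proposition~3.1]{bi16}. Third, and most substantively, the upper bound on $\cd_{\rm GS}(H)$ is not obtained by splicing Yetter--Drinfeld resolutions by hand. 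One applies Proposition~\ref{prop:subcdgs} to the inclusion $H\subset A*\mathcal O(SU_q(2))$ and then invokes the free-product formula \cite[Corollary~5.10]{bi18}, giving $\cd_{\rm GS}(A*\mathcal O(SU_q(2)))=\max(\cd_{\rm GS}(A),\cd_{\rm GS}(\mathcal O(SU_q(2))))=\max(\cd_{\rm GS}(A),3)$. Your resolution-splicing idea is essentially what underlies that free-product formula, so you are not wrong in principle, but the paper's route avoids redoing that work and is why Proposition~\ref{prop:subcdgs} was proved in the preceding section.
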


Let $A$ be a Hopf algebra, and consider
$A^{*n}$, the free product algebra of $n$ copies of $A$, which inherits a natural Hopf algebra structure such that
the canonical morphisms $\nu_i : A \longrightarrow A^{*n}$
are Hopf algebras morphisms. The free wreath product $A*_w A_s(n)$ \cite{bic04}
 is the quotient of the algebra $A^{*n} * A_s(n)$ by the two-sided ideal generated by the elements:
	$$\nu_k(a) u_{ki} - u_{ki} \nu_k(a) \ , \quad 1 \leq i,k \leq n \ , \quad
	a \in A.$$
The free wreath product $A*_wA_s(n)$ admits a 
Hopf algebra structure given by
$$\Delta(u_{ij}) = \sum_{k=1}^n u_{ik}\otimes u_{kj} , \quad 
\Delta(\nu_i(a)) =   \sum_{k=1}^n \nu_i(a_{(1)}) u_{ik} \otimes
\nu_k(a_{(2)}), $$  
$$\varepsilon(u_{ij}) = \delta_{ij}, \quad \varepsilon(\nu_i(a)) = \varepsilon(a), \
S(u_{ij}) = u_{ji},  \quad  
S(\nu_i(a)) = \sum_{k=1}^n \nu_k(S(a))u_{ki}.$$
When $A$ is a compact Hopf algebra (i.e. arises from a compact quantum, we do not need the precise definition here), the free wreath product is as well a compact Hopf algebra. 
In that case the monoidal categories of comodules have been described for $n\geq 4$ by Lemeux-Tarrago \cite{leta} in the case $S^2={\rm id}$ and Fima-Pittau \cite{fipi} in general. 

Taking $A$ to be the group algebra $\C[\ZZ/p\ZZ]$, we have $A_h^p(n) \simeq \C[\ZZ/p\ZZ] *_w A_s(n)$ by \cite[Example 2.5]{bic04}, hence Theorem \ref{thm:cdahpn} is a particular instance of  the following result.


\begin{theorem}\label{thm:cdfw}
 We have 
	$\cd(A*_w A_s(n))= {\rm max}(\cd(A), 3)$ for any compact Hopf algebra $A$ such that $\cd(A)=\cd_{\rm GS}(A)$ and any $n\geq 4$.
\end{theorem}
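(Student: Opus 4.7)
The plan is to transport the computation, via a known monoidal equivalence, to one on a free product whose cohomological dimension is accessible. By Lemeux-Tarrago \cite{leta} (in the Kac case) and Fima-Pittau \cite{fipi} (in general), for any compact Hopf algebra $A$ and any $n \geq 4$, there is a monoidal equivalence
$$\mathcal M^{A *_w A_s(n)} \simeq^{\otimes} \mathcal M^{A * A_o(n)},$$
where $A_o(n) = H(I_n)$ is the free orthogonal Hopf algebra and both sides are cosemisimple. Since $I_n$ is an asymmetry (and generic for $n \geq 4$), the previous subsection yields $\cd(A_o(n)) = \cd_{\rm GS}(A_o(n)) = 3$.

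The key technical step is a free-product formula: for cosemisimple Hopf algebras $H_1, H_2$ with $\cd_{\rm GS}(H_i) \geq 1$,
$$\cd_{\rm GS}(H_1 * H_2) = \max(\cd_{\rm GS}(H_1), \cd_{\rm GS}(H_2)).$$
The inequality ``$\geq$'' follows from Proposition \ref{prop:subcdgs}, since each $H_i$ is a Hopf subalgebra of the (cosemisimple) free product $H_1 * H_2$. The inequality ``$\leq$'' will come from a Mayer-Vietoris-type construction producing a projective Yetter-Drinfeld resolution of the trivial module over $H_1 * H_2$ of length $\max(d_1, d_2)$ from resolutions of length $d_i$ over each factor, along the lines of \cite{bic}. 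Applied with $H_1 = A$ and $H_2 = A_o(n)$, and using the hypothesis $\cd(A) = \cd_{\rm GS}(A)$, this yields $\cd_{\rm GS}(A * A_o(n)) = \max(\cd(A), 3)$.

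By monoidal invariance of $\cd_{\rm GS}$ (\cite[Corollary 5.7]{bi16}), one then has $\cd_{\rm GS}(A *_w A_s(n)) = \max(\cd(A), 3)$. If $\cd(A) < \infty$, Theorem \ref{thm:cd=cdgs} applied to the cosemisimple Hopf algebra $A *_w A_s(n)$ upgrades this to $\cd(A *_w A_s(n)) = \max(\cd(A), 3)$; equivalently, Corollary \ref{cor:invcdgs} can be applied directly in tandem with the $\cd$ analogue of the free-product formula. If $\cd(A) = \infty$, one uses a Hopf subalgebra embedding $A \hookrightarrow A *_w A_s(n)$ coming from the structure of the free wreath product (\cite{bic04}), combined with the inequality $\cd(B) \leq \cd(H)$ for Hopf subalgebras of cosemisimple Hopf algebras (\cite[Proposition 3.1]{bi16}), to force $\cd(A *_w A_s(n)) = \infty$.

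The hard part will be the free-product formula for $\cd_{\rm GS}$: the lower bound is immediate from Proposition \ref{prop:subcdgs}, but the upper bound demands a careful construction of a Yetter-Drinfeld resolution over $H_1 * H_2$ built from resolutions over each factor. A secondary technical point is identifying the explicit Hopf subalgebra embedding $A \hookrightarrow A *_w A_s(n)$ needed to dispatch the infinite case.
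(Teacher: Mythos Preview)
Your strategy is close in spirit to the paper's, but there are two genuine gaps.

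First, the monoidal equivalence you invoke is misstated. The results of Lemeux--Tarrago and Fima--Pittau do \emph{not} give $\mathcal M^{A *_w A_s(n)} \simeq^{\otimes} \mathcal M^{A * A_o(n)}$; rather, for $q$ with $q+q^{-1}=\sqrt{n}$, they give a monoidal equivalence between $\mathcal M^{A *_w A_s(n)}$ and $\mathcal M^H$ for a certain \emph{proper Hopf subalgebra} $H$ of the free product $A * \mathcal O(SU_q(2))$. (A sanity check: taking $A=\mathbb C$ would otherwise force $A_s(n)$ and $A_o(n)$ to be monoidally equivalent, which they are not.) This is precisely why the paper needs Proposition~\ref{prop:subcdgs}: one first bounds $\cd_{\rm GS}(H)$ by $\cd_{\rm GS}(A*\mathcal O(SU_q(2)))$ via the Hopf subalgebra inequality, and only then applies the free-product formula (which is already available as \cite[Corollary 5.10]{bi18}, so you need not reprove it). Your argument, as written, bypasses this step and therefore does not go through.

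Second, the embedding $A \hookrightarrow A *_w A_s(n)$ you appeal to for the lower bound is not a Hopf subalgebra embedding: the comultiplication $\Delta(\nu_i(a)) = \sum_k \nu_i(a_{(1)}) u_{ik} \otimes \nu_k(a_{(2)})$ mixes in the $u_{ik}$, so $\nu_i(A)$ is not a subcoalgebra. The paper instead observes that $A^{*n}$ sits inside $A *_w A_s(n)$ as a left coideal $*$-subalgebra, invokes Chirvasitu \cite{chi} for faithful flatness (hence projectivity), and then uses $\cd(A)=\cd(A^{*n})$ from \cite[Corollary 5.3]{bi18}. The bound $3\le \cd(A*_wA_s(n))$ comes separately from the genuine Hopf subalgebra $A_s(n)\subset A*_wA_s(n)$.
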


\begin{proof}
	First notice that there is a Hopf algebra map $\pi : A*_w A_s(n) \to A_s(n)$ such that $\pi(u_{ij})= u_{ij}$ and $\pi(a) =\varepsilon(a)$, hence $A_s(n)$ stands as Hopf subalgebra of $A*_w A_s(n)$. We thus have, by \cite[Proposition 3.1]{bi16},  $3= \cd(A_s(n))\leq \cd(A*_w A_s(n))$.
	Similarly the natural map $A^{*n} \to A*_w A_s(n)$ has a retraction, and hence $A^{*n}$ stands as left coideal $*$-subalgebra of $A*_w A_s(n)$. By the results in \cite{chi}, $A*_w A_s(n)$ is thus faithfully flat as $A^{*n}$-module, hence projective \cite{mawi}. We then have, using \cite[Corollary 5.3]{bi18}, $\cd(A)=\cd(A^{*n}) \leq \cd(A*_w A_s(n))$, since restricting a resolution by projective $A*_wA_s(n)$-modules to $A^{*n}$-modules remains a projective resolution. Hence we have
	$$ {\rm max}(\cd(A), 3) \leq \cd( A*_w A_s(n))$$
	The converse inequality obviously holds is $\cd(A)$ is infinite, hence we can assume that $\cd(A)$ is finite, and hence, in view of our assumption, that $\cd_{\rm GS}(A)$ is finite.
	
	The results in \cite{fipi,leta} ensure the existence, for $q$ satisfying $q+q^{-1}=\sqrt{n}$, of a monoidal equivalence between the category of comodules over $A*_w A_s(n)$ and the category of comodules over a certain Hopf subalgebra $H$ of the free product $A*\mathcal O(SU_q(2))$.  
	We have, combining Proposition \ref{prop:subcdgs} and \cite[Corollary 5.10]{bi18}
	$$\cd_{\rm GS}(H)\leq  \cd_{\rm GS}(A*\mathcal O(SU_q(2))) = {\rm max}(\cd_{\rm GS}(A), \cd_{\rm GS}(\mathcal O(SU_q(2))) $$
	Since $\cd_{\rm GS}(\mathcal O(SU_q(2))=3$ by \cite{bic,bi16}, we get $\cd_{\rm GS}(H)\leq  {\rm max}(\cd_{\rm GS}(A), 3)$, and since we assume that $\cd_{\rm GS}(A)$ is finite, we get that $\cd_{\rm GS}(H)$ is finite.  Hence by Corollary \ref{cor:invcdgs} and Theorem \ref{thm:cd=cdgs}, we get
	$$\cd(A*_wA_s(n)) = \cd(H) = \cd_{\rm GS}(H) \leq {\rm max}(\cd_{\rm GS}(A), 3)= {\rm max}(\cd_{}(A), 3)$$
	which concludes the proof. \end{proof}

	

\begin{remark}
	At $n=2$, using the simple description of the free wreath product as a crossed coproduct in \cite{bic04}, it is not difficult to show directly that $\cd(A*_w A_s(2))= {\rm max}(\cd(A), 1)$ if $A$ is non trivial.
\end{remark}

\begin{remark}
	Fima-Pittau \cite{fipi} define more generally a free wreath product $A*_w A_{\rm aut}(R,\varphi)$, for suitable pairs $(R,\varphi)$ consisting of a finite-dimensional $C^*$-algebra and a faithful state, and prove a similar monoidal equivalence result, so that Theorem \ref{thm:cdfw} should generalize to this setting.
\end{remark}

\section{question \ref{ques} in the finite-dimensional case}\label{sec:smoofd}

In this section we provide a partial answer to Question \ref{ques} in the finite-dimensional case. Recall that a Hopf algebra $A$ is said to be unimodular if  there is a non-zero two-sided integral in $A$, i.e. there exists a non-zero $t \in A$ such that $ta=at= \varepsilon(a)t$ for any $a$. If $A$ is cosemisimple and finite-dimensional, then $A^*$ is unimodular.

\begin{theorem}\label{thm:mifd}
	Let  $A$, $B$ be finite-dimensional Hopf algebras such that
	$\mathcal M^A \simeq^{\otimes} \mathcal M^B$. Then we have $\cd(A)=\cd(B)$ if one of the following condition holds.
	\begin{enumerate}
		\item The characteristic of $k$ is zero, or satisfies $p > d^{\frac{\varphi(d)}{2}}$, where $d =\dim(A)$.
		\item $A^*$ is unimodular. 
	\end{enumerate}
\end{theorem}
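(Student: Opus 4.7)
The plan is to reduce the comparison of cohomological dimensions to the well-known Larson-Radford equivalence between semisimplicity and cosemisimplicity for finite-dimensional Hopf algebras. First, since $A$ and $B$ are finite-dimensional Hopf algebras, they are Frobenius (Larson-Sweedler), hence self-injective. Over a self-injective ring, a module has finite projective dimension if and only if it is projective; therefore $\cd(A) = \pd_A(k_\varepsilon) \in \{0,\infty\}$, with $\cd(A) = 0$ precisely when $k_\varepsilon$ is projective, i.e.\ when $A$ is semisimple, and analogously for $B$. The identity $\cd(A) = \cd(B)$ is thus equivalent to the statement that $A$ is semisimple if and only if $B$ is semisimple.

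The next step is to verify that each of the hypotheses (1) and (2) is stable under the monoidal equivalence $\mathcal{M}^A \simeq^\otimes \mathcal{M}^B$. For (1), Schauenburg's theorem supplies an $A$-$B$-bi-Galois object $R$, and a dimension count using the canonical isomorphism $R \otimes R \simeq A \otimes R$ (hence $(\dim R)^2 = \dim A \cdot \dim R$) forces $\dim_k R = \dim_k A$, and symmetrically $\dim_k R = \dim_k B$, so $\dim_k A = \dim_k B$ and the characteristic hypothesis transfers. For (2), the unimodularity of $A^*$ is equivalent to the triviality of the distinguished invertible object of the finite tensor category $\mathcal{M}^A$, which under the canonical identification corresponds to the distinguished grouplike of $A$; this distinguished invertible object is preserved by monoidal equivalences, and hence $A^*$ is unimodular if and only if $B^*$ is. Finally, cosemisimplicity of a Hopf algebra is semisimplicity of the abelian category of comodules, and so is itself monoidally invariant: $A$ is cosemisimple if and only if $B$ is cosemisimple.

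It therefore suffices to show that under hypothesis (1) or (2), a finite-dimensional Hopf algebra is semisimple if and only if it is cosemisimple; combining this with the cosemisimplicity transfer then yields $A$ semisimple $\Leftrightarrow A$ cosemisimple $\Leftrightarrow B$ cosemisimple $\Leftrightarrow B$ semisimple. Under (1), the required equivalence is the Larson-Radford theorem \cite{lr} in characteristic zero, together with its extension to the characteristic $p > d^{\varphi(d)/2}$ case by Etingof-Gelaki \cite{eg} and the subsequent refinement in \cite{aegn}. Under (2), the unimodularity of $A^*$ allows one to apply Radford's trace formula to show $\Tr(S^2) \neq 0$, which is exactly the input that the Larson-Radford argument \cite{lr} needs to deduce the equivalence of semisimplicity and cosemisimplicity in any characteristic.

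The main conceptual obstacle I anticipate is the monoidal invariance assertion in step two for condition (2): one has to know that unimodularity of the dual is intrinsically a property of the comodule category $\mathcal{M}^A$ and not merely of the Hopf algebra $A$. This comes from the abstract theory of finite tensor categories, which identifies the distinguished grouplike of $A$ with the distinguished invertible object of $\mathcal{M}^A$. Once this monoidal invariance is in hand, the theorem follows by assembling the equivalences obtained from the classification results cited above.
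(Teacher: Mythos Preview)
Your treatment of case (1) is correct and parallel to the paper's (which routes through the Drinfeld double rather than directly through cosemisimplicity, but to the same effect). Case (2), however, has a genuine gap: the claim that unimodularity of $A^*$ yields ``$A$ semisimple $\Leftrightarrow$ $A$ cosemisimple'' is false in positive characteristic. Take $A=k^G$ for a finite group $G$ with ${\rm char}(k)=p$ dividing $|G|$: then $A\simeq k^{|G|}$ is semisimple but not cosemisimple (since $A^*=k[G]$ is not semisimple), while $A^*=k[G]$ is unimodular (the element $\sum_{g\in G} g$ is a two-sided integral). No trace-formula argument can produce the equivalence you assert, and your chain $A$ ss $\Leftrightarrow$ $A$ coss $\Leftrightarrow$ $B$ coss $\Leftrightarrow$ $B$ ss already breaks at the first step.

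The paper's argument for (2) never passes through cosemisimplicity of $A$. Instead it works on the dual side: since $B^*\simeq (A^*)^J$ for some Drinfeld twist $J$, if $A$ is semisimple then $A^*$ is cosemisimple, and \cite[Corollary~3.6]{aegn} shows that a \emph{unimodular} cosemisimple Hopf algebra remains cosemisimple after any Drinfeld twist, so $B^*$ is cosemisimple and $B$ is semisimple. Unimodularity of $A^*$ transfers to $B^*$ trivially because a Drinfeld twist leaves the multiplication unchanged, which gives the reverse implication. Your categorical observation that unimodularity of $A^*$ is the triviality of the distinguished invertible object of $\mathcal M^A$ is correct and furnishes an alternative proof of this transfer, but the essential missing input is the AEGN preservation result, not a nonexistent ``semisimple $\Leftrightarrow$ cosemisimple'' statement.
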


\begin{proof}
	First notice that since a finite-dimensional Hopf algebra is self-injective (projective modules are injective), we have  $\cd(A), \cd(B) \in \{0, \infty\}$ and hence there are only few cases to consider. 
	Moreover, for the Drinfeld double $D(A)$, we have $\cd(D(A))=0$ if and only if $D(A)$ is semisimple, if and only if $A$ is semisimple and cosemisimple \cite[Proposition 7]{rad}, and $\cd(D(A))=\infty$ otherwise. Moreover, we have $\cd(D(A))=\cd(D(B))$ since our monoidal equivalence $\mathcal M^A \simeq^{\otimes} \mathcal M^B$ induces a monoidal equivalence between the monoidal centers of these categories (notice that  $\cd(D(A))=\cd_{\rm GS}(A)$).
	
	If $k$ has characteristic zero  or satisfies $p > d^{\frac{\varphi(d)}{2}}$, then by \cite[Theorem 3.3]{lr} and \cite[Theorem 4.2]{eg} respectively, we have that $A$ is semisimple if and only if $A$ is semisimple and cosemisimple, if and only if $\cd_{}(D(A))=0$. Hence under one of these assumptions we have $\cd(A)=\cd(B)$ because $\cd(D(A))=\cd(D(B))$.
	

	Since $\mathcal M^A \simeq^{\otimes} \mathcal M^B$ and $A$, $B$ are finite-dimensional, We have, by \cite[Corollary 5.9]{sc1},  $B\simeq A^\sigma$ for some Hopf $2$-cocycle $\sigma$. At the dual level this means that $B^* \simeq (A^*)^J$ for some Drinfeld twist $J$. Hence if $\cd(A)=0$, i.e. $A$ is semisimple, we have that $A^*$ is cosemisimple, and assuming that $A^*$ is unimodular, we have that $B^*$ is cosemisimple as well by \cite[Corollary 3.6]{aegn}, and hence $B$ is semisimple, so that $\cd(B)=0$, as needed. The assumption that $A^*$ is unimodular is stable under Drinfeld twist since the multiplication does not change, thus $B^*$ is unimodular as well, and hence we also have $\cd(B)=0 \Rightarrow \cd(A)=0$, concluding the proof.
\end{proof}

As we see in the proof of the previous theorem, a complete answer to Question \ref{ques} in the finite-dimensional case reduces to the question whether the class of finite-dimensional cosemisimple Hopf algebras is stable under Drinfeld twists. Remark 3.9 in \cite{aegn} claimed that this is expected to be true, and would follow from a weak form of an important conjecture of Kaplansky saying that a finite-dimensional  cosemisimple Hopf algebra is unimodular (the strong form says that a cosemisimple Hopf algebra satisfies $S^2={\rm id}$), but we are not aware of a proof since then.

\section{summary of known answers to Question \ref{ques}}\label{sec:summ}

In this last section, for the convenience of the reader, we summarize what are, to the best of our knowledge, the known positive answers to Question \ref{ques}. Let $A$, $B$ be Hopf algebras having equivalent linear tensor categories of comodules. Then we have  $\cd(A)= \cd(B)$ in the following situations.

\begin{enumerate}
 \item $A$, $B$ have bijective antipode and are smooth.
\item $A$, $B$ are cosemisimple and their antipodes satisfy $S^4={\rm id}$.
\item $A$, $B$ are cosemisimple and $\cd(A)$, $\cd(B)$ are finite.
\item $A$, $B$ are finite-dimensional, and the characteristic of $k$ is zero, or satisfies $p > d^{\frac{\varphi(d)}{2}}$, where $d =\dim(A)$.
\item $A$, $B$ are finite-dimensional and $A^*$ is unimodular.
\end{enumerate}

\end{document}